\newtheorem{theorem}{Theorem}[section]
\newtheorem{proposition}[theorem]{Proposition}
\newtheorem{corollary}[theorem]{Corollary}
\theoremstyle{definition}
\newtheorem{definition}{Definition}[section]
\newtheorem{example}{Example}[section]
\theoremstyle{remark}
\newtheorem*{remark}{Remark}%[section]
\numberwithin{equation}{section}
\def\N{{\mathbb
N}}
\def\R{{\mathbb R}}
\DeclareMathOperator{\spn}{span}
\DeclareMathOperator{\spann}{span}
\DeclareMathOperator{\ran}{ran}
\newcommand{\Es}{E^\ast}
\newcommand{\xast}{x^{\ast}}
\newcommand{\Xast}{X^{\ast}}
\newcommand{\Xastast}{X^{\ast\ast}}
\newcommand{\n}[1]{\left\|#1\right\|}
\newcommand{\us}{u^{\ast}}
\newcommand{\vs}{v^{\ast}}
\newcommand{\Xs}{X^{\ast}}
\newcommand{\xs}{x^{\ast}}
\newcommand{\Xss}{X^{\ast\ast}}
\newcommand{\xss}{x^{\ast\ast}}
\newcommand{\Ys}{Y^{\ast}}
\newcommand{\ys}{y^{\ast}}
\newcommand{\Zs}{Z^{\ast}}
\newcommand{\zs}{z^{\ast}}
\newcommand{\eps}{\varepsilon}
\begin{document}

\title{On duality of diameter 2 properties}
% DIAMETER 2 PROPERTIES
%    Remove any unused author tags.
\date{26.04.2013}

%    author one information
\author{Rainis Haller}
%\address{Institute of Mathematics, University of Tartu, J.~Liivi 2, 50409 Tartu, Estonia}
\curraddr{}
%\email{rainis.haller@ut.ee, johann.langemets@ut.ee, mart.poldvere@ut.ee}
\thanks{This research was supported by Estonian Targeted Financing Project SF0180039s08.}

%    author two information
\author{Johann Langemets}
%\address{Institute of Mathematics, University of Tartu, J.~Liivi 2, 50409 Tartu, Estonia}
\curraddr{}
%\email{johann.langemets@ut.ee}
\thanks{}

%    author three information
\author{M\" art P\~oldvere}
%\address{Institute of Mathematics, University of Tartu, J.~Liivi 2, 50409 Tartu, Estonia}
\curraddr{}
%\email{mart.poldvere@ut.ee}
\thanks{}

\subjclass[2010]{Primary 46B20, 46B22}
%    For articles to be published after 1 January 2010, you may use
%    the following version:
%\subjclass[2010]{Primary }

\keywords{Diameter 2 property, slice, relatively weakly open set, octahedral norm}

\date{}

\dedicatory{}

\begin{abstract}
It is known that a Banach space has the strong diameter $2$ property
(i.e. every convex combination of slices of the unit ball has diameter $2$)
if and only if the norm on its dual space is octahedral (a notion introduced by Godefroy and Maurey).
We introduce two more versions of octahedrality, which turn out to be dual properties to
the diameter $2$ property and its local version
(i.e., respectively, every relatively weakly open subset and every slice of the unit ball has diameter $2$).
We study stability properties of different types of octahedrality,
which, by duality, provide easier proofs of many known results on diameter $2$ properties.
\begin{comment}
We characterize Banach spaces with the diameter $2$ properties from \cite{ALN} by octahedrality of the dual norm. This is a consequence of our characterization of dual Banach spaces with the weak$^\ast$ versions of the diameter $2$ properties. We provide an alternative approach to some stability results of diameter $2$ properties. In many cases this method seems to be more convenient.
\end{comment}
\begin{comment}
**** ABSTRACT ****
We study the octahedrality and roughness of norms on Banach spaces and show that they are the dual notions to diameter 2 properties. Also we present several equivalent conditions for octahedral norms. We consider some stability results for octahedrality and derive the known results for diameter 2 properties more conveniently. We show that for every separable subspace of an octahedral Banach space there is a separable subspace which is also octahedral.
\end{comment}
\end{abstract}

\maketitle
\section{Introduction}\label{sec: 1. Introduction}
All Banach spaces considered in this paper are nontrivial and over the real field. First let us fix some notation. Let $X$ be a Banach space. The closed unit ball of $X$ is denoted by $B_X$ and its unit sphere by $S_X$. The dual space of $X$ is denoted by $X^\ast$. By a \emph{slice} of $B_X$ we mean a set of the form
\[
S(x^*,\alpha)=\{x\in B_X\colon x^*(x)>1-\alpha\},
\]
where $x^*\in S_{X^*}$ and $\alpha>0$.

According to the terminology in \cite{ALN}, a Banach space $X$ has the
\begin{itemize}\label{def: d2p-s}
\item \emph{local diameter $2$ property} if every slice of $B_X$ has diameter $2$;
\item \emph{diameter $2$ property} if every nonempty relatively weakly open subset
of $B_X$ has diameter $2$;
\item \emph{strong diameter $2$ property} if every convex combination of slices of $B_X$ has diameter $2$, i.e.
the diameter of $\sum_{i=1}^n \lambda_i S_i$ is $2$, whenever $n\in\mathbb N$, $\lambda_1,\dotsc,\lambda_n\geq 0$ with $\sum_{i=1}^n\lambda_i=1$, and $S_1,\dotsc,S_n$ are slices of $B_X$.
\end{itemize}

The question whether the three diameter $2$ properties are really different, remained open in \cite{ALN}. However, by now it is known that they are distinguishable. On the one hand, the diameter $2$ property clearly implies the local diameter $2$ property, and the strong diameter $2$ property implies the diameter $2$ property. This follows directly from Bourgain's lemma \cite[Lemma II.1 p.~26]{GGMS}, which asserts that every nonempty relatively weakly open subset of $B_X$ contains some convex combination of slices.

An important consequence of the investigation in \cite{ABL} by Acosta, Becerra~Guerrero and L\'opez~P\'erez is that the strong diameter $2$ property is absent on $p$-sums of Banach spaces for $1<p<\infty$. (The latter result was independently obtained in the Master's Thesis of the second named author, defended at the University of Tartu in June 2012 (see also \cite{HL}).) Since the diameter $2$ property is stable by taking $\ell_p$-sums for all $1\leq p\leq\infty$ \cite{ALN}, this affirms that the strong diameter $2$ property is essentially different from the (local) diameter $2$ property.

On the other side, in a recent preprint \cite{GPZ}, Becerra~Guerrero, L\'opez P\'erez, and Rueda~Zoido constructed a Banach space enjoying the local diameter $2$ property but lacking the $2$ property; moreover, the unit ball of this space contains nonempty relatively weakly open subsets with arbitrarily small diameters.

%On the other side, in a recent preprint \cite{GPZ} by Becerra~Guerrero, L\'opez P\'erez and Rueda~Zoido, a Banach space is constructed satisfying the local diameter $2$ property, but without the diameter $2$ property, moreover, the unit ball of that Banach space contains nonempty relatively weakly open subsets with diameter arbitrarily small.

If $X$ is a dual space, then slices of $B_X$ whose defining functional comes from (the canonical image of) the predual of $X$ are called \emph{weak$^\ast$ slices} of $B_X$. A natural question to ask is whether diameter $2$ properties of a dual space remain the same properties if, instead of all slices or relatively weakly open subsets, one considers only weak$^\ast$ slices or relatively weak$^\ast$ open subsets.

\begin{example}\label{example}
Every convex combination of weak$^\ast$ slices of $B_{C[0,1]^\ast}$ has diameter $2$
(this follows by observing that every weak$^\ast$ slice of $B_{C[0,1]^\ast}$ contains infinitely many different functionals arising via integrating against a measure supported at a singleton);
however, $B_{C[0,1]^\ast}$ has slices with arbitrarily small diameter
(to see this, observe that $C[0,1]^\ast\cong\ell_1([0,1])\oplus_1 C[0,1]^\ast$, and $\ell_1([0,1])$ has the Radon--Nikod\'ym property).
\end{example}
\begin{comment}
\begin{example}
The space $C[0,1]^*$ has the property that every convex combination of weak$^\ast$ slices has diameter $2$, but it has slices of arbitrarily small diameter.

Indeed, using Proposition \ref{prop: reformulations of oct} (iii), one can see very easily that the space $C[0,1]$ is octahedral. By Theorem \ref{thm: omnibus thm for the weak* Sd2P} we have that every convex combination of weak$^\ast$ slices in $C[0,1]^*$ has diameter $2$. On the other hand, as $C[0,1]^*=\ell_1([0,1])\oplus_1 C[0,1]^*$ and $\ell_1([0,1])$ has the RNP, we get that $C[0,1]^*$ has the RNP.
\end{example}
\end{comment}
Example~\ref{example} suggests that it makes sense to consider also the weak$^\ast$ versions of the diameter 2 properties.
%Influenced by this example, we should also consider weak$^\ast$ versions of diameter 2 properties.

\begin{definition}\label{def: weak* d2p-s}
Let $X$ be a Banach space. We say that $X^\ast$ has the
\begin{itemize}
\item \emph{weak$^\ast$ local diameter $2$ property} if every weak$^\ast$ slice of $B_{X^\ast}$ has diameter $2$;
\item \emph{weak$^\ast$ diameter $2$ property} if every nonempty relatively weak$^\ast$ open subset
of  $B_{X^\ast}$ has diameter $2$;
\item \emph{weak$^\ast$ strong diameter $2$ property} if every convex combination of weak$^\ast$ slices of $B_{X^\ast}$ has diameter $2$.
\end{itemize}
\end{definition}

The following relationship between the diameter $2$ properties is straightforward to verify.

\begin{proposition}\label{prop: X d2p = X** weak* d2p}
A Banach space $X$ has the local diameter $2$ property (respectively, the diameter $2$ property, the strong diameter $2$ property) if and only if $X^{\ast\ast}$ has the weak$^\ast$ local diameter $2$ property (respectively, weak$^\ast$ diameter $2$ property, weak$^\ast$ strong diameter $2$ property).
\end{proposition}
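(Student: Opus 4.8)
The plan is to reduce all three equivalences to two elementary facts about the canonical embedding $X\hookrightarrow X^{\ast\ast}$. The first is Goldstine's theorem, $\overline{B_X}^{\,w^\ast}=B_{X^{\ast\ast}}$, together with its immediate consequence: if $V$ is a nonempty relatively weak$^\ast$ open subset of $B_{X^{\ast\ast}}$, then $V\cap B_X$ is nonempty and weak$^\ast$ dense in $V$ (intersect a weak$^\ast$ neighbourhood of a point of $V$ with a weak$^\ast$ open set cutting out $V$, and apply Goldstine). The second is that the norm of $X^{\ast\ast}$ is weak$^\ast$ lower semicontinuous, so that $\diam A=\diam\overline{A}^{\,w^\ast}$ for every $A\subseteq B_{X^{\ast\ast}}$; in particular $\diam A=\diam C$ whenever $A\subseteq C\subseteq\overline{A}^{\,w^\ast}$.

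For the local and strong versions I would use that the weak$^\ast$ slices of $B_{X^{\ast\ast}}$ are exactly the sets $\widetilde S(x^\ast,\alpha):=\{x^{\ast\ast}\in B_{X^{\ast\ast}}\colon x^{\ast\ast}(x^\ast)>1-\alpha\}$ with $x^\ast\in S_{X^\ast}$, $\alpha>0$, and that $\widetilde S(x^\ast,\alpha)\cap B_X=S(x^\ast,\alpha)$. Since $\widetilde S(x^\ast,\alpha)$ is relatively weak$^\ast$ open in $B_{X^{\ast\ast}}$, the two facts above give $\diam S(x^\ast,\alpha)=\diam\widetilde S(x^\ast,\alpha)$, so slices of $B_X$ correspond bijectively, with equal diameters, to weak$^\ast$ slices of $B_{X^{\ast\ast}}$; this settles the local case. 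For a convex combination, fix $x_i^\ast\in S_{X^\ast}$, $\alpha_i>0$, $\lambda_i\ge 0$ with $\sum_i\lambda_i=1$ and write $S_i:=S(x_i^\ast,\alpha_i)$, $\widetilde S_i:=\widetilde S(x_i^\ast,\alpha_i)$. One has $\sum_i\lambda_i S_i\subseteq\sum_i\lambda_i\widetilde S_i$, and the only point needing an argument is $\sum_i\lambda_i\widetilde S_i\subseteq\overline{\sum_i\lambda_i S_i}^{\,w^\ast}$: given $x_i^{\ast\ast}\in\widetilde S_i$ and a basic weak$^\ast$ neighbourhood of $\sum_i\lambda_i x_i^{\ast\ast}$ determined by $y_1^\ast,\dots,y_m^\ast\in X^\ast$ and $\eps>0$, apply Goldstine to each $x_i^{\ast\ast}$ separately to get $y_i\in B_X$ with $y_i(x_i^\ast)>1-\alpha_i$ and $|y_i(y_j^\ast)-x_i^{\ast\ast}(y_j^\ast)|<\eps$ for all $j$; then $y_i\in S_i$ and $\sum_i\lambda_i y_i$ lies in the prescribed neighbourhood. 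Hence $\diam\sum_i\lambda_i S_i=\diam\sum_i\lambda_i\widetilde S_i$, which settles the strong case.

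For the plain diameter $2$ property I would argue on basic sets: every nonempty relatively weakly open subset of $B_X$ contains one of the form $W=\{x\in B_X\colon|x_j^\ast(x)-x_j^\ast(x_0)|<\eps,\ 1\le j\le m\}$ with $x_0\in B_X$, $x_j^\ast\in X^\ast$, and every nonempty relatively weak$^\ast$ open subset of $B_{X^{\ast\ast}}$ contains a set $V=\{x^{\ast\ast}\in B_{X^{\ast\ast}}\colon|x^{\ast\ast}(x_j^\ast)-x_0^{\ast\ast}(x_j^\ast)|<\eps,\ 1\le j\le m\}$ with $x_0^{\ast\ast}\in B_{X^{\ast\ast}}$, $x_j^\ast\in X^\ast$; since the ambient balls have diameter $2$, it is enough to match the diameters of such sets. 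If $V$ is given, then $W:=V\cap B_X$ is nonempty by Goldstine, is relatively weakly open in $B_X$, and is weak$^\ast$ dense in $V$, so $\diam V=\diam W$; conversely a given $W$ equals $V\cap B_X$ for the set $V$ obtained by reading the same inequalities in $X^{\ast\ast}$, again with $\diam V=\diam W$. Both implications of the plain case follow.

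I expect no real difficulty beyond Goldstine's theorem; the thing to be careful about is that the diameters actually being compared coincide — open versus closed slices, convex combinations versus their weak$^\ast$ closures, relatively open sets versus their weak$^\ast$ closures — which is exactly what weak$^\ast$ lower semicontinuity of the norm on $X^{\ast\ast}$ supplies. The mildly delicate step is the density of $\sum_i\lambda_i S_i$ in $\sum_i\lambda_i\widetilde S_i$, where the approximants in the $n$ factors must be chosen simultaneously against one common finite set of test functionals; the rest is routine.
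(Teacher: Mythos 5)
Your proof is correct. The paper states this proposition without proof (calling it ``straightforward to verify''), and your argument --- Goldstine's theorem plus weak$^\ast$ lower semicontinuity of the bidual norm, applied to slices, to convex combinations of slices via simultaneous approximation against a common finite set of test functionals, and to basic relatively (weak$^\ast$) open sets --- is exactly the intended verification, with the genuinely delicate points (open versus weak$^\ast$-closed sets having equal diameter, and the density of $\sum_i\lambda_i S_i$ in $\sum_i\lambda_i\widetilde S_i$) handled properly.
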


\begin{comment}
\begin{corollary}
If $\Xastast$ has the local diameter $2$ property (resp. the diameter $2$ property, the strong diameter $2$ property), then $X$ has the local diameter $2$ property (resp. the diameter $2$ property, the strong diameter $2$ property).
\end{corollary}
\end{comment}

In the present paper, we study the weak$^\ast$ diameter $2$ properties more deeply. The starting point of our investigations is the following result by Deville (cf. \cite[Proposition~3]{D}).
\begin{proposition}\label{Deville}
If the norm on $X$ is octahedral, then $X^\ast$ has the weak$^\ast$ strong diameter $2$ property.
\end{proposition}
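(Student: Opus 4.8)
The plan is to prove the statement in its direct form: given arbitrary weak$^\ast$ slices $S_i=S(x_i,\alpha_i)$ of $B_{X^\ast}$ (so $x_1,\dots,x_n\in S_X$, $\alpha_i>0$, $i=1,\dots,n$) and scalars $\lambda_1,\dots,\lambda_n\ge 0$ with $\sum_i\lambda_i=1$, the convex combination $C:=\sum_{i=1}^n\lambda_i S_i$ has diameter $2$. Since $C\subseteq B_{X^\ast}$, we automatically have $\diam C\le 2$, so it suffices to exhibit, for each sufficiently small $\eps>0$, two points of $C$ at distance at least $2-4\eps$.

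First I would use octahedrality of the norm of $X$ on the finite-dimensional subspace $E:=\spn\{x_1,\dots,x_n\}$: there is $y\in S_X$ with $\|x+y\|\ge(1-\eps)(\|x\|+1)$ for every $x\in E$. Applying this also to $-x\in E$ gives $\|x-y\|\ge(1-\eps)(\|x\|+1)$ for every $x\in E$; in particular $\|x_i+y\|\ge 2(1-\eps)$ and $\|x_i-y\|\ge 2(1-\eps)$ for each $i$. The fact that a single direction $y$ works simultaneously for all the defining functionals $x_1,\dots,x_n$ is exactly what octahedrality buys us, and it is the heart of the argument.

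Next, for each $i$ choose by Hahn--Banach functionals $u_i^\ast,v_i^\ast\in S_{X^\ast}$ with $u_i^\ast(x_i+y)=\|x_i+y\|$ and $v_i^\ast(x_i-y)=\|x_i-y\|$. Since $u_i^\ast(x_i),u_i^\ast(y)\le 1$, the inequality $u_i^\ast(x_i)+u_i^\ast(y)\ge 2(1-\eps)$ forces $u_i^\ast(x_i)\ge 1-2\eps$ and $u_i^\ast(y)\ge 1-2\eps$; similarly $v_i^\ast(x_i)\ge 1-2\eps$ and $v_i^\ast(y)\le-(1-2\eps)$. If $\eps$ is fixed from the outset so that $2\eps<\min_i\alpha_i$, then $u_i^\ast(x_i)>1-\alpha_i$ and $v_i^\ast(x_i)>1-\alpha_i$, i.e. $u_i^\ast,v_i^\ast\in S_i$ for every $i$. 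Hence $u^\ast:=\sum_i\lambda_i u_i^\ast$ and $v^\ast:=\sum_i\lambda_i v_i^\ast$ both lie in $C$, and since $\|y\|=1$,
\[
\|u^\ast-v^\ast\|\ge (u^\ast-v^\ast)(y)=\sum_{i=1}^n\lambda_i\bigl(u_i^\ast(y)-v_i^\ast(y)\bigr)\ge 2(1-2\eps)=2-4\eps.
\]
Letting $\eps\to 0$ yields $\diam C=2$, which is what we wanted.

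I do not anticipate a genuine obstacle: the proof is a direct perturbation argument whose only load-bearing ingredient is that octahedrality produces one near-additive direction $y$ valid for all finitely many slices at once. The two small points to be careful about are (a) picking $\eps$ small relative to the $\alpha_i$ so that the Hahn--Banach functionals remain inside the respective slices, and (b) noting that octahedrality on the \emph{subspace} $E$ automatically covers the sign $-y$ as well, so that the same $y$ serves both $u^\ast$ and $v^\ast$.
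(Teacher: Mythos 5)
Your proof is correct. The paper does not prove this proposition in isolation; it is the implication (ii)$\Rightarrow$(i) of Theorem 3.5, which is routed through the intermediate ``extension'' conditions (iii) and (iii'): from octahedrality one first finds $y\in S_X$ such that \emph{any} functional prescribed on $E=\spann\{x_1,\dotsc,x_n\}$ extends to $X$ with any prescribed value of modulus up to $1+\eps_0$ at $y$ and norm at most $1+\eps$, and the two points of the convex combination are then obtained as (normalizations of) two such extensions of each slice-defining functional, differing by nearly $2$ at $y$. You instead produce the slice points directly as Hahn--Banach norming functionals $u_i^\ast$ of $x_i+y$ and $v_i^\ast$ of $x_i-y$, reading off $u_i^\ast(x_i),v_i^\ast(x_i)\geq 1-2\eps$ and $u_i^\ast(y)-v_i^\ast(y)\geq 2(1-2\eps)$ from $\|x_i\pm y\|\geq 2-2\eps$. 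The shared load-bearing idea is identical -- octahedrality supplies one direction $y$ serving all finitely many slices at once -- but your construction of the functionals is shorter and more elementary, needing only the consequence $\|x_i\pm y\|\geq 2-2\eps$ (condition (iii) of Proposition 2.6) rather than the full quantitative inequality on $E$; the cost is that your functionals merely land in the slices, whereas the paper's extensions agree exactly with prescribed functionals on $E$, which is the stronger statement (Theorem 3.5 (iii), (iii')) that the paper needs for the converse direction and for the stability results of Section 4. All your estimates check out, including the choice $2\eps<\min_i\alpha_i$ and the use of the subspace $E$ to cover both signs $\pm y$.
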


\begin{remark}
Deville's assertion is, in fact, that $\sum_{i=1}^n 1/{n}\, S^\ast_i$ has diameter $2$ whenever $n\in\mathbb N$ and $S^\ast_1,\dotsc,S^\ast_n$ are weak$^\ast$ slices of $B_{X^\ast}$. It is straightforward to verify that the latter is equivalent to the weak$^\ast$ strong diameter $2$ property of $X^\ast$.

Likewise, a Banach space $X$ has the strong diameter $2$ property if (and only if) $\sum_{i=1}^n 1/{n}\, S_i$ has diameter $2$ whenever $n\in\mathbb N$ and $S_1,\dotsc,S_n$ are slices of $B_X$.
\end{remark}

The reverse implication of Proposition~\ref{Deville} stays unproven in \cite{D} (see \cite[Remark (c) after Proposition~3]{D}). However, Godefroy (cf. \cite[p.~12]{G}) marks without an explanation that the norm on a Banach space $X$ is octahedral if and only if $X^\ast$ has the weak$^\ast$ strong diameter $2$ property. In what follows, we present a simple direct proof of this fact (see
Theorem 3.5). An alternative proof can be found in a very recent preprint \cite{BGLPRZoca}.

%A proof can be found in \cite[Theorem~2.1]{BGLPRZ}. In this paper we also present a simple direct proof of this fact (see Theorem~\ref{thm: omnibus thm for the weak* Sd2P}). %In what follows, we present a simple direct proof of this fact (see Theorem~\ref{thm: omnibus thm for the weak* Sd2P}).

Let us summarize the results of the paper.

In Section~\ref{sec: 2. Octahedrality}, we introduce  two more octahedrality-type properties of the norm, which correspond to the (weak$^\ast$) local diameter $2$ property and to the (weak$^\ast$) diameter $2$ property, respectively. We also provide equivalent reformulations for different types of octahedrality, which are often more convenient to use.

%Section~\ref{sec: 2. Octahedrality} contains essential criteria of octahedrality for later use. In addition to the octahedral norms, we introduce two more octahedrality-type properties of the norm, which shall correspond to the (weak$^\ast$) local diameter $2$ property and to the (weak$^\ast$) diameter $2$ property.
%As a result of our investigation, we point out equivalent, but sometimes more convenient formulations of octahedrality at different levels (Propositions~\ref{prop: reformulations of loc oct}, \ref{prop: reformulations of weakly oct}, \ref{prop: reformulations of weakly oct for X*}, and \ref{prop: reformulations of oct}).

Relationship between weak$^\ast$ diameter $2$ properties and the corresponding octahedrality properties is established in  Section~\ref{sec: 3. Criteria for weak* d2p-s} (Theorems~\ref{thm: omnibus thm for the weak* ld2P}, \ref{thm: omnibus thm for the weak* d2P}, and \ref{thm: omnibus thm for the weak* Sd2P}). As a consequence of Proposition~\ref{prop: X d2p = X** weak* d2p}, our characterizations of the weak* diameter $2$ properties lead to dual characterizations of the corresponding diameter $2$ properties (Theorems~\ref{thm: omnibus thm for the ld2P}, \ref{thm: omnibus thm for the d2P}, \ref{thm: omnibus thm for the sd2P}). We also show that diameter $2$ properties may be considered as sort of extension properties.

In Section~\ref{sec: 4. Stability of octahedrality}, we study stability properties of different types of octahedrality.
This section is motivated by the idea to provide octahedrality-based approach to known stability results on diameter $2$ properties. We are convinced that in many cases this method is more convenient and preferable.
\begin{comment}
Godefroy marks in \cite[page 12]{G} that the norm on $C[0,1]$ is octahedral, but the norm on $(C[0,1])^{\ast\ast}$ is not. This says that $(C[0,1])^\ast$ has the weak$^\ast$ strong diameter $2$ property, but fails the strong diameter $2$ property.
\end{comment}

\section{Octahedrality}\label{sec: 2. Octahedrality}
\begin{definition}[see \cite{G} and \cite{DGZ}, cf. \cite{D}]\label{def: octahedral}
Let $X$ be a Banach space. The norm on $X$ is \emph{octahedral}
if, for every finite-dimensional subspace $E$ of $X$ and every $\eps>\nobreak0$,
there is~a $y\in S_X$ such that
\[
\|x+y\|\geq(1-\eps)\bigl(\|x\|+\|y\|\bigr)\quad\text{for all $x\in E$.}
\]
Whenever it makes no confusion, throughout the paper, spaces whose norm is octahedral,
will also be called octahedral for simplicity.
\end{definition}

Octahedral norms were introduced by Godefroy and Maurey \cite{GM} (see also \cite{G})
in order to characterize Banach spaces containing an isomorphic copy of $\ell_1$.
The connection of octahedral norms to the subject appears probably first in Deville's paper \cite[Proposition~3]{D} (see Proposition~\ref{Deville}). In Theorems~\ref{thm: omnibus thm for the weak* Sd2P} and \ref{thm: omnibus thm for the sd2P} below, we expose the duality between octahedrality and the strong diameter $2$ property.
%where it is proven that an (everywhere) octahedral norm is $2$-average rough (which he shows is equivalent to the weak$^\ast$ diameter $2$ property). The reverse implication stays unproven there (\cite[Remark (c) after Proposition~3]{D}), we will present it in Theorem~\ref{thm: oct = 2-aver rough}. However, Godefroy \cite[p.~12]{G} marks without an explanation that the norm on $X$ is octahedral if and only if $X^\ast$ has the weak$^\ast$ strong diameter $2$ property. In what follows, we also present a simple direct proof of that fact (see Theorem~\ref{thm: omnibus thm for the weak* Sd2P} below).

In order to characterize spaces whose dual has the weak$^\ast$ local diameter $2$ property or the weak$^\ast$ diameter $2$ property,
we introduce two more octahedrality-type properties of the norm.

\begin{definition}\label{def: locally octahedral, weakly octahedral}
Let $X$ be a Banach space. We say that (the norm on) $X$ is
\begin{itemize}
\item
%\marginpar{\tiny Rainis: Tuleb loc. oct. definitsiooni muuta nii, et triviaalne ruum ei sobi. Ettepanek: loc. oct. if, for every $x\in B_X$ and $\eps>0$, there is a $y\in S_X$ such that $\|x+ty\|\geq(1-\eps)(\|x\|+|t|)\quad\forall t\in\mathbb R$. Alternatiiv: for every $x\in B_X$ and $\eps>0$, there is a $y\in S_X$ such that $\|sx+y\|\geq(1-\eps)(|s|\|x\|+\|y\|)\quad\forall s\in\mathbb R$. Sarnaselt defineerida weakly octahedral.}
\emph{locally octahedral} if, for every $x\in X$ and every $\eps>0$, there is a $y\in S_X$ such that
\[
\|sx+y\|\geq(1-\eps)\bigl(|s|\|x\|+\|y\|\bigr)\quad\text{for all $s\in\R$;}
\]
\item
\emph{weakly octahedral} if, for every finite-dimensional subspace $E$ of~$X$, every $\xs\in B_{\Xs}$, and every $\eps>\nobreak0$,
there is~a $y\in S_X$ such that
\[
\|x+y\|\geq(1-\eps)\bigl(|\xs(x)|+\|y\|\bigr)\quad\text{for all $x\in E$.}
\]
\end{itemize}
%\marginpar{\tiny Johann: Kas m\"{a}rkida, et nt lokaalse (ilmselt ka teiste) oktaeedrilisuse jaoks piisab eeldada, et see v\~{orratus} kehtib tihedal hulgal $A$, s.t. $\overline{A}=S_X$?}
%\marginpar{\tiny Rainis: Ei ole vaja.}
%
\end{definition}

\begin{remark}\label{rem: loc-oct is inf-dimensional}
%\marginpar{\tiny Rainis: See Remark on mustandi stiilis. Kui triv. ruum ei saa olla loc. oct., siis tuleb ka muuta. Kas on viidet sellise viilu leidumise kohta?}
Clearly, every weakly octahedral Banach space is locally octahedral, and every octahedral Banach space is weakly octahedral.

Note that a locally octahedral Banach space $X$ is infinite-dimensional. Indeed, for a finite-dimensional $X\not=\{0\}$, there exists a weak$^\ast$ slice $S(x,\alpha)$ of $B_{X^\ast}$, whose diameter is less than $\alpha$. If $X$ is locally octahedral, then $\|x\pm y\|\geq 2-\alpha$ for some $y\in S_X$, and therefore $x_1^\ast(x+y)\geq 2-\alpha$ and $x_2^\ast(x-y)\geq 2-\alpha$ for some $x_1^\ast,x_2^\ast\in S_{X^\ast}$. It follows that
%\[x_1^\ast(y)>1-\alpha,\quad x_2^\ast(y)<-1+\alpha,\] and \[x_1^\ast(x)>1-\alpha,\quad x_2^\ast(x)>1-\alpha.\]
\[
x_1^\ast(x),\,x_2^\ast(x),\,x_1^\ast(y),\,-x_2^\ast(y)>1-\alpha,
\]
thus $x_1^\ast,x_2^\ast\in S(x,\alpha)$, and
\[\alpha>\|x_1^\ast-x_2^\ast\|\geq x_1^\ast(y)-x_2^\ast(y)\geq 2-2\alpha.\]
Since $\alpha$ may be taken arbitrarily small, this leads to a contradiction.
\end{remark}

In the following Propositions~\ref{prop: reformulations of loc oct}--\ref{prop: reformulations of oct}, we point out some equivalent but sometimes more convenient formulations of octahedrality.

\begin{proposition}\label{prop: reformulations of loc oct}
Let $X$ be a Banach space.
The following assertions are equivalent:
\begin{itemize}
\item[{\rm(i)}]
$X$ is locally octahedral;
\item[(ii)] whenever $x\in S_X$ and $\eps>0$, there is a $y\in S_X$ such that
\begin{equation}\label{eq: norga oktaeedrilisuse tingimus t>0 abil}
\|x\pm ty\|\geq (1-\eps)\bigl(\|x\|+t\bigr)\quad\text{for all $t>0$};
\end{equation}
\item[{\rm(iii)}]
whenever $x\in S_X$ and $\eps>0$, there is a $y\in S_X$ such that
\begin{equation*}
\|x\pm y\|\geq 2-\eps.
\end{equation*}%\label{eq: norga oktaeedrilisuse tingimus t=1 abil}\marginpar{\tiny Rainis: Valeminumber pole vajalik.}

\end{itemize}
\end{proposition}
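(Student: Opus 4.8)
The plan is to prove the chain of implications (i) $\Rightarrow$ (ii) $\Rightarrow$ (iii) $\Rightarrow$ (i). The implication (ii) $\Rightarrow$ (iii) is immediate: given $x\in S_X$ and $\eps>0$, pick the $y\in S_X$ provided by (ii) and specialize \eqref{eq: norga oktaeedrilisuse tingimus t>0 abil} to $t=1$, obtaining $\|x\pm y\|\geq(1-\eps)(\|x\|+1)=2(1-\eps)\geq2-2\eps$; rescaling $\eps$ gives (iii).

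For (i) $\Rightarrow$ (ii), let $x\in S_X$ and $\eps>0$ be given. Apply the definition of local octahedrality to this $x$ (and a suitably chosen $\eps'$), producing $y\in S_X$ with $\|sx+y\|\geq(1-\eps')(|s|\|x\|+\|y\|)=(1-\eps')(|s|+1)$ for all $s\in\R$. For $t>0$ we want to control $\|x\pm ty\|$; writing $x\pm ty=t\bigl((1/t)x\pm y\bigr)$ and applying the estimate with $s=\pm1/t$ gives $\|x\pm ty\|=t\|(\pm1/t)x+y\|\geq t(1-\eps')\bigl((1/t)+1\bigr)=(1-\eps')(1+t)$, which is exactly \eqref{eq: norga oktaeedrilisuse tingimus t>0 abil} with $\eps'$ in place of $\eps$. (One must be slightly careful that the definition of locally octahedral quantifies over $x\in X$ and $s\in\R$, while (ii) fixes $x\in S_X$ and uses $t>0$; the symmetry $s\mapsto-s$ in the definition is what produces both signs $x\pm ty$.)

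The implication (iii) $\Rightarrow$ (i) is the substantive step and the main obstacle. Given $x\in X$ — without loss of generality $x\in S_X$ by homogeneity, the case $x=0$ being trivial — and $\eps>0$, apply (iii) to get $y\in S_X$ with $\|x\pm y\|\geq2-\eps'$ for a small $\eps'$ to be fixed. I must then upgrade this single estimate at $|s|=1$ to the estimate $\|sx+y\|\geq(1-\eps)(|s|+1)$ for \emph{all} $s\in\R$. For $|s|\leq1$ one uses convexity: $\|sx+y\|$ as a function of $s$ is convex, equals $\|y\|=1$ at $s=0$ and is $\geq2-\eps'$ at $s=\pm1$, so by interpolating along the segment one gets $\|sx+y\|\geq1+|s|(1-\eps')\geq(1-\eps')(1+|s|)$ for $|s|\leq1$. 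For $|s|\geq1$ one uses the triangle inequality in the other direction together with the case $|s|=1$: write $sx+y = s\bigl(x+(1/s)y\bigr)$... actually more directly, for $|s|\geq 1$ set $s = \sigma|s|$ with $\sigma=\pm1$; then $sx+y=(|s|-1)\sigma x+(\sigma x+y)$ if $\sigma=\mathrm{sign}(s)$, so $\|sx+y\|\geq\|\sigma x+y\|-(|s|-1)\|x\|\geq(2-\eps')-(|s|-1)=1+|s|-\eps'$... this is too weak for large $|s|$. Instead, for $|s|\geq1$ write $\|sx+y\|\geq|s|\,\|x\|-\|y\|=|s|-1$, which is again insufficient. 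The correct route for $|s|\geq1$: note $\|sx+y\| = |s|\,\bigl\|x+(1/s)y\bigr\|$ and $\|x\pm(1/s)y\|\geq1+(1/|s|)(1-\eps')$ by the already-established case for arguments of modulus $\leq1$ (applied with the roles adjusted), giving $\|sx+y\|\geq|s|+1-\eps'=(|s|+1)-\eps'\geq(1-\eps'')(|s|+1)$ once $\eps''$ absorbs the additive error relative to $|s|+1\geq2$. So choosing $\eps'$ with $\eps'/2\leq\eps$ (say $\eps'=\eps$) yields the locally octahedral inequality with $\eps$, completing the cycle.
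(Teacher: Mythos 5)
Your overall architecture (cycle of implications, with (iii)$\Rightarrow$(i) obtained by rescaling the single estimate at $|s|=1$ to all $s$) is the same as the paper's, which proves (iii)$\Rightarrow$(ii) by one unified computation with $\max\{1,t\}$ and $\min\{1,t\}$. However, your justification of the case $|s|\le1$ is genuinely wrong: convexity of $s\mapsto\|sx+y\|$ bounds interior values from \emph{above} by the chord, not from below (indeed $sx+y=s(x+y)+(1-s)y$ gives $\|sx+y\|\le s\|x+y\|+(1-s)$), so ``interpolating along the segment'' does not yield a lower bound, and the inequality $\|sx+y\|\ge 1+|s|(1-\eps')$ you assert there is in fact false in general. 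For a concrete counterexample take $X=\R^2$ with unit ball the convex hull of $\pm(1,0)$, $\pm(0,1)$, $\pm(5/12,5/6)$, $\pm(5/12,-5/6)$, and $x=(1,0)$, $y=(0,1)$: then $\|x\pm y\|=17/10=2-\eps'$ with $\eps'=3/10$, while $\|\tfrac12 x+y\|=6/5<1+\tfrac12(1-\eps')=1.35$. The same defect is inherited by your $|s|\ge1$ case, which invokes the $|s|\le1$ bound ``with the roles adjusted.''

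The step is easily repaired, and the repair is exactly the paper's argument: from $\|x+y\|\le\|sx+y\|+(1-s)\|x\|$ one gets, for $0\le s\le1$, $\|sx+y\|\ge(2-\eps')-(1-s)=1+s-\eps'\ge(1-\eps')(1+s)$ (the last inequality because $\eps'(1+s)\ge\eps'$), and symmetrically for the other sign; for $|s|\ge1$ the swapped version gives $\|x+uy\|\ge1+|u|-\eps'$ for $|u|\le1$, hence $\|sx+y\|=|s|\,\|x+(1/s)y\|\ge|s|+1-|s|\eps'\ge(1-\eps')(|s|+1)$. With these corrections (and taking $\eps'=\eps$), your proof closes; note also that in the counterexample above the triangle-inequality bound $1+s-\eps'$ is attained with equality, so it is the right intermediate estimate and your stronger ``interpolated'' one cannot be salvaged. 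As a minor point, you should also delete the two abandoned attempts you left in the $|s|\ge1$ discussion.
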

\begin{proof}
(i)$\Leftrightarrow$(ii)$\Rightarrow$(iii) is obvious.

\medskip
(iii)$\Rightarrow$(ii).
Assume that (iii) holds. Let $x\in S_X$ and let $\eps>0$. By (iii), pick any $y\in S_X$ with $\|x\pm y\|\geq 2-\eps$. We show that $y$ satisfies (\ref{eq: norga oktaeedrilisuse tingimus t>0 abil}). Suppose that $t>0$. Then
\begin{align*}
\|x\pm ty\|&\geq\max\{1,t\}\|x\pm y\|-\bigr(\max\{1,t\}-\min\{1,t\}\bigl)\\
&\geq \max\{1,t\}(1-\eps)+\min\{1,t\}\\
&=1+t-\max\{1,t\}\eps\\
%&\geq \bigr(\max\{1,t\}+\min\{1,t\}\bigl)(1-\eps)\\
&\geq(1+t)(1-\eps).
\end{align*}
Thus $y$ satisfies (\ref{eq: norga oktaeedrilisuse tingimus t>0 abil}).
\end{proof}

%\marginpar{\tiny M\"{a}rt: Tuleb m\~{o}elda, kas me j\"{a}tame tingimused (ii) ja/v\~{o}i (ii') alles ning milline silt tuleb panna tingimusele (iii).\\ Rainis: Las olla nii nagu on.}
\begin{proposition}\label{prop: reformulations of weakly oct}
Let $X$ be a Banach space.
The following assertions are equivalent:
\begin{itemize}
\item[{\rm(i)}]
$X$ is weakly octahedral;
\item[{\rm(ii)}]
whenever $E$ is a finite-dimensional subspace of $X$, $\xs\in B_{\Xs}$, and $\eps>\nobreak0$,
there is~a $y\in S_X$ such that
\[
\|x+ty\|\geq(1-\eps)\bigl(|\xs(x)|+t\bigr)\quad\text{for all $x\in S_E$ and $t>0$;}
\]
\item[{\rm(ii')}]
whenever $n\in\N$, $x_1,\dotsc,x_n\in S_X$, $\xs\in B_{\Xs}$, and $\eps>\nobreak0$, there is~a $y\in S_X$ such that
\[
\|x_i+ty\|\geq(1-\eps)\bigl(|\xs(x_i)|+t\bigr)\quad\text{for all $i\in\{1,\dotsc,n\}$ and $t>0$;}
\]
\item[{\rm(iii)}]
whenever $n\in\N$, $x_1,\dotsc,x_n\in S_X$, $\xs\in B_{\Xs}$, and $\eps>\nobreak0$, there is~a $y\in S_X$ such that
\[
\|x_i+ty\|\geq(1-\eps)\bigl(|\xs(x_i)|+t\bigr)\quad\text{for all $i\in\{1,\dotsc,n\}$ and $t\geq\eps$.}
\]
\end{itemize}
\end{proposition}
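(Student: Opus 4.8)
The plan is to prove the cycle of implications (i)$\Rightarrow$(ii)$\Rightarrow$(ii')$\Rightarrow$(iii)$\Rightarrow$(i); everything except the last step is formal, so the whole point is getting back from (iii) to (i).

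The equivalence (i)$\Leftrightarrow$(ii) is just a rescaling within the subspace $E$: a given $y\in S_X$ satisfies the defining inequality of (i) if and only if it satisfies that of (ii). Indeed, assuming (i), for $x\in S_E$ and $t>0$ one applies the inequality of (i) to $x/t\in E$ and multiplies by $t$ to get $\|x+ty\|\ge(1-\eps)(|\xs(x)|+t)$; conversely, assuming (ii), for $x\in E\setminus\{0\}$ one applies the inequality of (ii) to $x/\|x\|\in S_E$ with $t=1/\|x\|$ and multiplies by $\|x\|$ (the case $x=0$ being trivial, as $\|y\|=1\ge1-\eps$). Next, (ii)$\Rightarrow$(ii') follows by applying (ii) to $E:=\spann\{x_1,\dots,x_n\}$, noting each $x_i\in S_E$; and (ii')$\Rightarrow$(iii) is trivial since ``$t\ge\eps$'' is a weakening of ``$t>0$''. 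So it remains to prove (iii)$\Rightarrow$(i), and by the first observation it suffices, for a given finite-dimensional $E$, $\xs\in B_{\Xs}$ and $\eps>0$, to produce $y\in S_X$ with $\|x+ty\|\ge(1-\eps)(|\xs(x)|+t)$ for all $x\in S_E$ and $t>0$.

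We may assume $0<\eps<1$, since for $\eps\ge1$ the right-hand side is nonpositive and any $y\in S_X$ works. Set $t_0:=\eps/(2-\eps)>0$ and $\eps':=\eps/2$ (so $\eps'\le t_0$, as $2-\eps\le2$). Using compactness of $S_E$, choose a finite $\delta$-net $x_1,\dots,x_n\subset S_E$ with $\delta\le\eps t_0/4$, and apply (iii) to $x_1,\dots,x_n\in S_X$, the functional $\xs$, and the tolerance $\eps'$, obtaining $y\in S_X$ with
\[
\|x_i+ty\|\ge(1-\eps')\bigl(|\xs(x_i)|+t\bigr)\qquad\text{for all }i\text{ and all }t\ge\eps'.
\]
Now fix $x\in S_E$ and $t>0$. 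If $t\le t_0$, then crudely
\[
\|x+ty\|\ge\|x\|-t\|y\|=1-t\ge(1-\eps)(1+t)\ge(1-\eps)\bigl(|\xs(x)|+t\bigr),
\]
the middle inequality being the elementary fact that $1-t\ge(1-\eps)(1+t)$ holds precisely when $t\le\eps/(2-\eps)$, and the last using $|\xs(x)|\le\|\xs\|\,\|x\|\le1$. If instead $t\ge t_0$ (hence $t\ge\eps'$), pick $x_i$ with $\|x-x_i\|\le\delta$; then, using $|\xs(x_i)|\ge|\xs(x)|-\delta$,
\[
\|x+ty\|\ge\|x_i+ty\|-\delta\ge(1-\eps')\bigl(|\xs(x_i)|+t\bigr)-\delta\ge(1-\eps')\bigl(|\xs(x)|+t\bigr)-2\delta,
\]
and since $|\xs(x)|+t\ge t_0$, the surplus $(\eps-\eps')\bigl(|\xs(x)|+t\bigr)\ge(\eps/2)t_0\ge2\delta$ absorbs the error, so $\|x+ty\|\ge(1-\eps)\bigl(|\xs(x)|+t\bigr)$. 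Thus $y$ is as required.

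The main obstacle is that the net-approximation error $2\delta$ cannot be hidden inside the target $(1-\eps)(|\xs(x)|+t)$ when $|\xs(x)|$ and $t$ are both small --- which is also exactly the regime where the restriction $t\ge\eps$ in (iii) gives nothing. The remedy is the two-regime split above: small $t$ is handled by the coarse bound $\|x+ty\|\ge1-t$ (which uses only $\|y\|=1$), leaving a range $t\ge t_0$ on which $|\xs(x)|+t$ is bounded away from $0$ and the net error is absorbable. Beyond that, the argument is just a matter of choosing the threshold $t_0$, the net fineness $\delta$, and the auxiliary tolerance $\eps'$ consistently.
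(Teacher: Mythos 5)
Your proof is correct and follows essentially the same route as the paper: the cycle through (ii) and (ii') with the rescaling identifications, and for (iii)$\Rightarrow$(ii) a finite net on $S_E$ together with a two-regime split in $t$, where small $t$ is handled by the trivial bound $\|x+ty\|\geq 1-t\geq(1-\eps)(1+t)$ and large $t$ by absorbing the net error using $|\xs(x)|+t\geq t_0$. The only differences from the paper's argument are bookkeeping choices of the threshold and tolerances (the paper settles for a final constant $1-2\eps$, which is equally sufficient).
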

\begin{proof}
(i)$\Leftrightarrow$(ii)$\Rightarrow$(ii')$\Rightarrow$(iii) is obvious.

\begin{comment}
\medskip
(ii')$\Rightarrow$(ii).
Assume that (ii') holds.
Let $E$ be a finite-dimensional subspace of $X$, let $\xs\in B_{\Xs}$, and let $0<\eps<1$.
Pick $\delta>0$ satisfying $\eps\geq(2-\eps)\delta$, and $\gamma>0$ satisfying $\gamma(2-\delta)\leq\delta^2$.
Let $A\subset S_E$ be a finite $\gamma$-net for $S_E$.
By (ii'), there is a $y\in S_E$ satisfying
\[
\|z+ty\|\geq(1-\delta)\bigl(|\xs(z)|+t\bigr)\quad\text{for all $z\in A$ and all $t>0$.}
\]
Let $x\in S_E$ and $t>0$ be arbitrary.
First suppose that $t\leq\delta$.
In this case, observing that $-\delta\geq-\eps+\delta-\eps\delta$, i.e. $1-\delta\geq(1-\eps)(1+\delta)$,
and thus also $1-\delta\geq(1-\eps)(1+t)$,
\[
\|x+ty\|\geq1-\delta\geq(1-\eps)(1+t)\geq(1-\eps)\bigl(|\xs(x)|+t\bigr).
\]
Now consider the case $t\geq\delta$. Letting $z\in A$ be such that $\|x-z\|<\gamma$, one has
\begin{align*}
\|x+ty\|
&\geq\|z+ty\|-\gamma\geq(1-\delta)\bigl(|\xs(z)|+t\bigr)-\gamma\\
&\geq(1-\delta)\bigl(|\xs(x)|+t\bigr)-\gamma(1-\delta)-\gamma.
\end{align*}
Since $t\geq\delta$, one has
\[
\gamma(1-\delta)+\gamma=\gamma(2-\delta)\leq\delta^2\leq\delta\bigl(|\xs(x)|+t\bigr),
\]
and it follows that
\[
\|x+ty\|\geq(1-2\delta)\bigl(|\xs(x)|+t\bigr)\geq(1-2\eps)\bigl(|\xs(x)|+t\bigr).
\]
\end{comment}

\medskip
(iii)$\Rightarrow$(ii).
Assume that (iii) holds.
Let $E$ be a nontrivial finite-dimensional subspace of $X$, let $\xs\in B_{\Xs}$, and let $0<\eps<1$.
Pick $\delta>0$ satisfying $\eps\geq(2-\eps)\delta$, and $\gamma>0$ satisfying $\gamma(2-\delta)\leq\delta^2$.
Let $A\subset S_E$ be a finite $\gamma$-net for $S_E$.
By (iii), there is a $y\in S_X$ satisfying
\[
\|z+ty\|\geq(1-\delta)\bigl(|\xs(z)|+t\bigr)\quad\text{for all $z\in A$ and all $t\geq\delta$.}
\]
Let $x\in S_E$ and $t>0$ be arbitrary.
First suppose that $t\leq\delta$.
In this case, observing that $-\delta\geq-\eps+\delta-\eps\delta$, i.e. $1-\delta\geq(1-\eps)(1+\delta)$,
and thus also $1-\delta\geq(1-\eps)(1+t)$,
\[
\|x+ty\|\geq1-\delta\geq(1-\eps)(1+t)\geq(1-\eps)\bigl(|\xs(x)|+t\bigr).
\]
Now consider the case $t\geq\delta$. Letting $z\in A$ be such that $\|x-z\|<\gamma$, one has
\begin{align*}
\|x+ty\|
&\geq\|z+ty\|-\gamma\geq(1-\delta)\bigl(|\xs(z)|+t\bigr)-\gamma\\
&\geq(1-\delta)\bigl(|\xs(x)|+t\bigr)-\gamma(1-\delta)-\gamma.
\end{align*}
Since $t\geq\delta$, one has
\[
\gamma(1-\delta)+\gamma=\gamma(2-\delta)\leq\delta^2\leq\delta\bigl(|\xs(x)|+t\bigr),
\]
and it follows that
\[
\|x+ty\|\geq(1-2\delta)\bigl(|\xs(x)|+t\bigr)\geq(1-2\eps)\bigl(|\xs(x)|+t\bigr).
\]
\end{proof}

\begin{proposition}\label{prop: reformulations of weakly oct for X*}
Let $X$ be a Banach space.
The following assertions are equivalent:
\begin{itemize}
\item[{\rm(i)}]
$X^\ast$ is weakly octahedral;
\item[{\rm(ii)}]
whenever $E$ is a finite-dimensional subspace of~$\Xs$, $x\in B_{X}$, and $\eps>\nobreak0$,
there is~a $\ys\in S_{\Xs}$ such that
\[
\|\xs+\ys\|\geq(1-\eps)\bigl(|\xs(x)|+\|\ys\|\bigr)\quad\text{for all $\xs\in E$;}
\]

\item[{\rm(iii)}]
whenever $n\in\N$, $\xs_1,\dotsc,\xs_n\in S_{\Xs}$, $x\in B_{X}$, and $\eps>\nobreak0$, there is~a $\ys\in S_{\Xs}$ such that
\[
\|\xs_i+t\ys\|\geq(1-\eps)\bigl(|\xs_i(x)|+t\bigr)\quad\text{for all $i\in\{1,\dotsc,n\}$ and $t\geq\eps$.}
\]
\end{itemize}
\end{proposition}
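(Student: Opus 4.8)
The plan is to prove the cycle of implications (i)~$\Rightarrow$~(ii)~$\Rightarrow$~(iii)~$\Rightarrow$~(i); the first two implications are routine, and the whole content of the statement lies in (iii)~$\Rightarrow$~(i), where one must pass from testing against the entire bidual ball to testing only against the canonical image of $B_X$.

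For (i)~$\Rightarrow$~(ii), given a finite-dimensional subspace $E$ of $\Xs$, a point $x\in B_X$, and $\eps>0$, apply the definition of weak octahedrality to the space $\Xs$ with the bidual functional taken to be the canonical image $\hat x\in B_{\Xss}$ of $x$; since $\hat x(\xs)=\xs(x)$ for every $\xs\in\Xs$, the resulting $\ys\in S_{\Xs}$ is as desired. For (ii)~$\Rightarrow$~(iii), given $\xs_1,\dots,\xs_n\in S_{\Xs}$, $x\in B_X$, and $\eps>0$, apply (ii) to $E=\spn\{\xs_1,\dots,\xs_n\}$ to obtain $\ys\in S_{\Xs}$ with $\|\zs+\ys\|\ge(1-\eps)(|\zs(x)|+1)$ for all $\zs\in E$; substituting $\zs=\xs_i/t$ (legitimate since $E$ is a subspace) and multiplying by $t>0$ gives $\|\xs_i+t\ys\|\ge(1-\eps)(|\xs_i(x)|+t)$ for all $t>0$, in particular for $t\ge\eps$.

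The core is (iii)~$\Rightarrow$~(i). Fix a finite-dimensional $E\subseteq\Xs$ (the case $E=\{0\}$ being trivial), $\xss\in B_{\Xss}$, and $\eps\in(0,1)$ (for $\eps\ge1$ the inequality is automatic). Set $\eps'=\eps/(2-\eps)$, so $0<\eps'<\eps$, and choose $\gamma,\eta>0$ with $(2\gamma+\eta)/\eps'\le\eps-\eps'$. Let $\{\zs_1,\dots,\zs_m\}\subseteq S_E$ be a finite $\gamma$-net of the compact sphere $S_E$. By Goldstine's theorem the canonical image of $B_X$ is weak$^\ast$ dense in $B_{\Xss}$, so one can pick $x\in B_X$ with $|\xss(\zs_j)-\zs_j(x)|<\eta$ for $j=1,\dots,m$. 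Applying (iii) to $\zs_1,\dots,\zs_m\in S_{\Xs}$, this $x$, and $\eps'$ yields $\ys\in S_{\Xs}$ with $\|\zs_j+t\ys\|\ge(1-\eps')(|\zs_j(x)|+t)$ for all $j$ and all $t\ge\eps'$. Now take $\xs\in E$ and put $r=\|\xs\|$. If $r=0$ the required inequality is clear. If $0<r\le1/\eps'$, write $\xs=r\us$ with $\us\in S_E$, choose $\zs_j$ with $\|\us-\zs_j\|<\gamma$, apply the preceding inequality with $t=1/r\ge\eps'$, and combine $\|\us+t\ys\|\ge\|\zs_j+t\ys\|-\gamma$ with $|\zs_j(x)|\ge|\xss(\zs_j)|-\eta\ge|\xss(\us)|-\gamma-\eta$; multiplying through by $r$ (so that $r\us=\xs$ and $rt=1$) and using the choice of $\gamma,\eta$ together with $|\xss(\xs)|+1\ge1$ gives $\|\xs+\ys\|\ge(1-\eps)(|\xss(\xs)|+1)$. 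Finally, if $r>1/\eps'=(2-\eps)/\eps$, then $\eps r>2-\eps$, so $\|\xs+\ys\|\ge r-1\ge(1-\eps)(r+1)\ge(1-\eps)(|\xss(\xs)|+1)$, since $|\xss(\xs)|\le r$. Hence $\Xs$ is weakly octahedral.

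The main obstacle is exactly this last step: weak octahedrality of $\Xs$ nominally requires controlling $|\xss(\xs)|$ for every $\xss\in B_{\Xss}$, whereas hypothesis (iii) supplies control only for functionals coming from the predual. The device bridging this gap is Goldstine's theorem, which becomes applicable precisely because the finite-dimensional space $E$ can first be replaced by a finite net inside $S_E$, after which $\xss$ need only be approximated on finitely many functionals. The surrounding $\eps$--$\gamma$--$\eta$ bookkeeping is entirely parallel to the net argument already carried out in the proof of Proposition~\ref{prop: reformulations of weakly oct}.
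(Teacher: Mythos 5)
Your proof is correct and follows essentially the route the paper sketches: the net argument for passing from finitely many functionals to all of $S_E$ is the same as in the proof of Proposition~\ref{prop: reformulations of weakly oct}, and your use of Goldstine's theorem to replace an arbitrary $\xss\in B_{\Xss}$ by a point of $B_X$ after restricting attention to a finite net is exactly the ``$\eps$-net plus Goldstine'' alternative the paper mentions for (ii)$\Rightarrow$(i) (the paper's first suggestion being the principle of local reflexivity). The only organizational difference is that you fuse the paper's two return implications (iii)$\Rightarrow$(ii) and (ii)$\Rightarrow$(i) into a single, fully detailed (iii)$\Rightarrow$(i), which is fine.
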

\begin{proof}
(i)$\Rightarrow$(ii)$\Rightarrow$(iii) is obvious.

\medskip
(iii)$\Rightarrow$(ii) is similar to (iii)$\Rightarrow$(ii) in the proof of Proposition~\ref{prop: reformulations of weakly oct}.

\medskip
(ii)$\Rightarrow$(i). This is a standard use of the principle of local reflexivity. Alternatively, one may use an appropriate $\eps$-net for $S_E$ and Goldstine's theorem.
%\marginpar{\tiny Märt: Kas keegi võtab vastutuse selle (mõlema kontrolli) eest?}
\end{proof}
\begin{proposition}\label{prop: reformulations of oct}
Let $X$ be a Banach space.
The following assertions are equivalent:
\begin{itemize}
\item[{\rm(i)}]
$X$ is octahedral;
%
%\item[{\rm(xxx)}]
%whenever $E$ is a finite-dimensional subspace of $X$, $n\in\N$, $\xs_1,\dotsc,\xs_n\in B_{\Xs}$, and $\eps>\nobreak0$,
%there is~a $y\in S_X$ such that
%\[
%\|x+y\|\geq(1-\eps)\bigl(|\xs_j(x)|+\|y\|\bigr)\quad\text{for all $x\in E$ and $j\in\{1,\dotsc,n\}$;}
%\]
%
\item[{\rm(ii)}]
whenever $E$ is a finite-dimensional subspace of $X$ and $\eps>\nobreak0$,
there is~a $y\in S_X$ such that
\begin{equation}\label{eq: oktaeedrilisuse tingimus t>0 abil}
\|x+ty\|\geq(1-\eps)\bigl(\|x\|+t\bigr)\quad\text{for all $x\in S_E$ and $t>0$;}
\end{equation}
\item[{\rm(ii')}]
whenever $n\in\N$, $x_1,\dotsc,x_n\in S_X$, and $\eps>\nobreak0$, there is~a $y\in S_X$ such that
\begin{equation*}
\|x_i+ty\|\geq(1-\eps)\bigl(\|x_i\|+t\bigr)\quad\text{for all $i\in\{1,\dotsc,n\}$ and $t>0$;}
\end{equation*}
\item[{\rm(iii)}]
whenever $n\in\N$, $x_1,\dotsc,x_n\in S_X$, and $\eps>\nobreak0$, there is~a $y\in S_X$ such that
\[
\|x_i+y\|\geq2-\eps\quad\text{for all $i\in\{1,\dotsc,n\}$.}
\]
\end{itemize}
\end{proposition}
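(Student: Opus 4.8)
The plan is to follow closely the pattern of the proofs of Propositions~\ref{prop: reformulations of loc oct} and~\ref{prop: reformulations of weakly oct}, so that all but one of the implications is routine. First I would observe that (i)$\Leftrightarrow$(ii) is pure homogeneity: given a finite-dimensional $E$ and $\eps>0$, octahedrality of the norm (Definition~\ref{def: octahedral}) provides a $y\in S_X$ with $\n{x+y}\geq(1-\eps)(\n{x}+1)$ for all $x\in E$; applying this with $x/t$ in place of $x$ (for $x\in S_E$ and $t>0$) and multiplying through by $t$ yields~(\ref{eq: oktaeedrilisuse tingimus t>0 abil}), while conversely applying (ii) with $x/\n{x}$ and $t=1/\n{x}$ (and treating $x=0$ trivially) recovers the defining inequality. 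Next, (ii)$\Rightarrow$(ii') is immediate on taking $E=\spn\{x_1,\dots,x_n\}$, since then each $x_i\in S_E$, and (ii')$\Rightarrow$(iii) follows at once by setting $t=1$ and rescaling $\eps$. Hence the only implication requiring real work is (iii)$\Rightarrow$(ii).

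For (iii)$\Rightarrow$(ii) I would argue as in the corresponding step of Proposition~\ref{prop: reformulations of weakly oct}, but more simply. We may assume $0<\eps<1$ and $E\neq\{0\}$ (otherwise~(\ref{eq: oktaeedrilisuse tingimus t>0 abil}) is vacuous). Since $S_E$ is compact, fix a finite $(\eps/2)$-net $A\subset S_E$ and apply (iii) to the points of $A$ with parameter $\eps/2$, obtaining a single $y\in S_X$ with $\n{z+y}\geq 2-\eps/2$ for all $z\in A$. The elementary estimate used in Proposition~\ref{prop: reformulations of loc oct},
\[
\n{z+ty}\geq\max\{1,t\}\,\n{z+y}-\bigl(\max\{1,t\}-\min\{1,t\}\bigr),
\]
then upgrades this to $\n{z+ty}\geq(1-\eps/2)(1+t)$ for every $z\in A$ and every $t>0$. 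Finally, for an arbitrary $x\in S_E$ I would pick $z\in A$ with $\n{x-z}<\eps/2$ and estimate
\[
\n{x+ty}\geq\n{z+ty}-\frac{\eps}{2}\geq\Bigl(1-\frac{\eps}{2}\Bigr)(1+t)-\frac{\eps}{2}\geq(1-\eps)(1+t)=(1-\eps)(\n{x}+t),
\]
the last inequality holding because $\frac{\eps}{2}(1+t)\geq\frac{\eps}{2}$ for $t>0$. This gives~(\ref{eq: oktaeedrilisuse tingimus t>0 abil}) and closes the cycle.

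I do not expect a serious obstacle here: the whole statement is a homogeneity-and-net reformulation, and the only delicate point is the bookkeeping of constants in (iii)$\Rightarrow$(ii). In fact that step is slightly cleaner than its counterpart in Proposition~\ref{prop: reformulations of weakly oct}, because the lower bound $(1-\eps/2)(1+t)$ does not degenerate as $t\to 0$ (it stays above $1-\eps/2$), so there is no need to treat small $t$ separately and the crude net estimate $\n{x+ty}\geq\n{z+ty}-\n{x-z}$ works uniformly in $t>0$.
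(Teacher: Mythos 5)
Your proof is correct and uses exactly the same two ingredients as the paper's: the estimate $\n{z+ty}\geq\max\{1,t\}\n{z+y}-\bigl(\max\{1,t\}-\min\{1,t\}\bigr)$ to pass from $t=1$ to all $t>0$, and a finite $\eps/2$-net of $S_E$ to pass from finitely many points to all of $S_E$. The only (immaterial) difference is that the paper factors the hard direction through (ii') --- proving (iii)$\Rightarrow$(ii') by the rescaling estimate and then (ii')$\Rightarrow$(ii) by the net argument --- whereas you compose the two steps into a single direct proof of (iii)$\Rightarrow$(ii); your constant bookkeeping checks out.
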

\begin{proof}
(i)$\Leftrightarrow$(ii)$\Rightarrow$(ii')$\Rightarrow$(iii) is obvious.

\medskip
(iii)$\Rightarrow$(ii') is similar to (iii)$\Rightarrow$(ii) in the proof of Proposition~\ref{prop: reformulations of loc oct}.
%\marginpar{\tiny Märt: Kontrollida, et on similar!\\ Rainis: kontrollitud}

\medskip
(ii')$\Rightarrow$(ii). Assume that (ii') holds. Let $E$ be a nontrivial finite-dimensional subspace of $X$ and let $\eps>0$.
We shall show that there is a $y\in S_X$ satisfying (\ref{eq: oktaeedrilisuse tingimus t>0 abil}).
Let $A\subset S_X$ be a finite $\eps/2$-net for $S_E$. By (ii'), there is a $y\in S_X$ satisfying
\[
\|z+ty\|\geq(1-\frac{\eps}{2})(\|z\|+t)\quad\text{for all $z\in A$ and $t>0$}.
\]
Let $x\in S_X$ and $t>0$ be arbitrary. Letting $z\in A$ be such that $\|x-z\|<\eps/2$, one has
\begin{align*}
\|x+ty\|&\geq\|z+ty\|-\|x-z\|\\
&\geq(1-\dfrac{\eps}{2})(1+t)-\dfrac{\eps}{2}\\
&\geq(1-\eps)(1+t).
\end{align*}

\begin{comment}
**************************

Assume that (ii') holds. Let $E$ be a nontrivial finite-dimensional subspace of $X$ and let $\eps>0$.
Let $A\subset S_X$ be a finite $\eps/2$-net for $S_E$.
The desired $y$ in (ii) can be any $y\in S_E$ satisfying (\ref{eq: oktaeedrilisuse tingimus t>0 abil}) with $\eps$ replaced by $\frac\eps2$
where $\{x_1,\dots,x_n\}$ is an $\frac\eps2$-net for~$S_E$.
\marginpar{\tiny Märt: Teha nii, et ei peaks pliiatsit võtma.}
%\marginpar{\tiny M\"{a}rt: Rehkendus on v\"{a}lja kommenteeritud.}

{\tiny
\marginpar{\tiny Rainis: Vaja v\"alja kommenteerida!}
It is enough to show that there is a $y\in S_X$ such that
\[\|x+ty\|\geq(1-\eps)(1+t)\quad\text{for all $x\in S_E$ and $t>0$.}\]
Find an $\eps/2$-net $\{x_1,\dots,x_n\}$ for $S_E$. Now we apply (ii') for $x_1,\dots,x_n$ and $\eps/2$ to get a $y\in S_X$ such that
\[\|x_i+ty\|\geq(1-\eps/2)(\|x_i\|+t)\quad\text{for all $i\in\{1,\dots,n\}$ and $t>0$.}\]
If $x\in S_E$, then choose $x_i$ such that $\|x-x_i\|<\eps/2$, and we have
\begin{align*}
\|x+ty\|&\geq\|x_i+ty\|-\|x-x_i\|\\
&\geq(1-\eps/2)(1+t)-\eps/2\\
&\geq (1-\eps)(1+t)\quad \text{for all $t>0$.}
\end{align*}
}
\end{comment}
\end{proof}
\section{Criteria for weak$^\ast$ diameter 2 properties}\label{sec: 3. Criteria for weak* d2p-s}
In this section, the duality between diameter $2$ properties and octahedrality is established. We also show that one may think of diameter $2$ properties as sort of extension properties.

\begin{comment}
In this section, we summarize some main results of our investigation of (weak$^\ast$) diameter $2$ properties.\marginpar{\tiny Rainis: Need ei ole ainult meie tulemused!}
\end{comment}
%
\begin{theorem}\label{thm: omnibus thm for the weak* ld2P}
Let $X$ be a Banach space.
The following assertions are equivalent:
\begin{itemize}
\item[{\rm(i)}]
$\Xs$ has the weak$^{\ast}$ local diameter $2$ property;
\item[{\rm(ii)}]
$X$ is locally octahedral;
\item[{\rm(iii)}]
for every $x\in S_X$, every $\alpha\in[-1,1]$, every $\eps>\nobreak0$, and every $\eps_0\in(0,\eps)$,
there is a $y\in S_X$ such that, whenever $|\gamma|\leq1+\eps_0$, there is a $\ys\in\Xs$ satisfying
\[
\ys(x)=\alpha,\quad \ys(y)=\gamma,\quad\text{and}\quad \|\ys\|\leq1+\eps;
\]
\item[{\rm(iii')}]
for every $x\in S_X$, every $\alpha\in[-1,1]$, and every $\eps>\nobreak0$,
there are $y\in S_X$ and $\xs_1,\xs_2\in\Xs$ satisfying
\begin{equation*}%\label{eq: tugev jatkude tingimus weak* ld2P kirjelduses}
\xs_1(x)=\xs_2(x)=\alpha,\quad \xs_1(y)-\xs_2(y)>2-\eps,
\end{equation*}
and $\|\xs_1\|,\|\xs_2\|\leq 1+\eps$;
\item[{\rm(iii'')}]
for every $x\in S_X$ and every $\eps>\nobreak0$,
there are $y\in S_X$ and $\xs_1,\xs_2\in\Xs$ satisfying
\begin{equation*}%\label{eq: jatkude tingimus weak* ld2P kirjelduses}
\xs_1(x)=\xs_2(x)=1,\quad \xs_1(y)-\xs_2(y)>2-\eps,
\end{equation*}
 and $\|\xs_1\|,\|\xs_2\|\leq 1+\eps$.
\end{itemize}
\end{theorem}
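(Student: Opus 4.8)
The plan is to prove the equivalences by first establishing $(\mathrm{i})\Leftrightarrow(\mathrm{ii})$ and then running the cycle $(\mathrm{ii})\Rightarrow(\mathrm{iii})\Rightarrow(\mathrm{iii}')\Rightarrow(\mathrm{iii}'')\Rightarrow(\mathrm{ii})$. Throughout I would use the reformulation of local octahedrality from Proposition~\ref{prop: reformulations of loc oct}: $X$ is locally octahedral if and only if for every $x\in S_X$ and every $\eps>0$ there is a $y\in S_X$ with $\|x\pm y\|\ge 2-\eps$.

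For $(\mathrm{i})\Leftrightarrow(\mathrm{ii})$ I would argue directly with weak$^\ast$ slices $S_\ast(x,\alpha)=\{\xs\in B_{\Xs}\colon\xs(x)>1-\alpha\}$, $x\in S_X$, $\alpha>0$. Assuming (ii): for small $\eps$ pick $y\in S_X$ with $\|x\pm y\|\ge2-\eps$ and then norming functionals $\xs_1,\xs_2\in S_{\Xs}$ for $x+y$ and $x-y$; a short computation gives $\xs_i(x)\ge1-\eps$, $\xs_1(y)\ge1-\eps$, $-\xs_2(y)\ge1-\eps$, so $\xs_1,\xs_2$ lie in $S_\ast(x,2\eps)$ while $\|\xs_1-\xs_2\|\ge\xs_1(y)-\xs_2(y)\ge2-2\eps$; since $x\in S_X$ and $\eps>0$ are arbitrary and diameter is monotone in the slice, every weak$^\ast$ slice has diameter $2$. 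Assuming (i): apply the weak$^\ast$ local diameter $2$ property to $S_\ast(x,\eps)$ to obtain $\xs_1,\xs_2$ in it with $\|\xs_1-\xs_2\|$ arbitrarily close to $2$, and choose $y\in S_X$ almost norming $\xs_1-\xs_2$; then $\xs_i(x)$ is close to $1$, and $\xs_1(y)-\xs_2(y)$ close to $2$ together with $|\xs_i(y)|\le1$ forces $\xs_1(y)$ and $-\xs_2(y)$ close to $1$, whence $\|x+y\|\ge\xs_1(x+y)$ and $\|x-y\|\ge\xs_2(x-y)$ are both close to $2$; Proposition~\ref{prop: reformulations of loc oct} then gives (ii).

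The heart of the proof is $(\mathrm{ii})\Rightarrow(\mathrm{iii})$. Given $x\in S_X$, $\alpha\in[-1,1]$, $\eps>0$ and $\eps_0\in(0,\eps)$, I would first fix $\eps'\in(0,1)$ small enough that $(1+\eps_0)/(1-\eps')\le1+\eps$; this is possible exactly because $\eps_0<\eps$. Applying the definition of local octahedrality to $x$ with $\eps'$ produces $y\in S_X$ with $\|sx+y\|\ge(1-\eps')(|s|+1)$ for all $s\in\R$; factoring $ax+by=b\bigl(\tfrac{a}{b}x+y\bigr)$ when $b\neq0$ (and handling $b=0$ trivially) upgrades this to $\|ax+by\|\ge(1-\eps')(|a|+|b|)$ for all $a,b\in\R$. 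In particular $x$ and $y$ are linearly independent, so $E:=\spn\{x,y\}$ is $2$-dimensional. Now, for each $\gamma$ with $|\gamma|\le1+\eps_0$, let $\phi$ be the linear functional on $E$ with $\phi(x)=\alpha$ and $\phi(y)=\gamma$. The inequality just obtained shows that $\|ax+by\|\le1$ forces $|a|+|b|\le(1-\eps')^{-1}$, hence $\|\phi\|_{\Es}\le(1-\eps')^{-1}\max\{|\alpha|,|\gamma|\}\le(1+\eps_0)/(1-\eps')\le1+\eps$. A Hahn--Banach extension $\ys\in\Xs$ of $\phi$ then satisfies $\ys(x)=\alpha$, $\ys(y)=\gamma$ and $\|\ys\|\le1+\eps$, which is exactly (iii).

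The remaining implications are short. For $(\mathrm{iii})\Rightarrow(\mathrm{iii}')$ apply (iii) with any $\eps_0\in(0,\eps)$ and take $\gamma=1$ and $\gamma=-1$: the resulting $\xs_1,\xs_2$ satisfy $\xs_1(x)=\xs_2(x)=\alpha$ and $\xs_1(y)-\xs_2(y)=2>2-\eps$. Then $(\mathrm{iii}')\Rightarrow(\mathrm{iii}'')$ is just the case $\alpha=1$. For $(\mathrm{iii}'')\Rightarrow(\mathrm{ii})$: from $\xs_1(x)=\xs_2(x)=1$, $\|\xs_i\|\le1+\eps$ and $\xs_1(y)-\xs_2(y)>2-\eps$, the bounds $|\xs_i(y)|\le1+\eps$ give $\xs_1(y)>1-2\eps$ and $-\xs_2(y)>1-2\eps$, so $\|x+y\|\ge\xs_1(x+y)/\|\xs_1\|>(2-2\eps)/(1+\eps)$ and likewise $\|x-y\|>(2-2\eps)/(1+\eps)$; choosing $\eps$ small makes both $\ge2-\delta$ for a prescribed $\delta$, so by Proposition~\ref{prop: reformulations of loc oct} the space $X$ is locally octahedral. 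I expect the only real bookkeeping to be the choice of $\eps'$ in $(\mathrm{ii})\Rightarrow(\mathrm{iii})$, where the strict gap $\eps-\eps_0>0$ must absorb the loss $(1-\eps')^{-1}$ coming from octahedrality; everything else reduces to Hahn--Banach extensions and elementary estimates with norming functionals.
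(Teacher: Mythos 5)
Your proof is correct and follows essentially the same route as the paper: the same two-dimensional Hahn--Banach extension argument for (ii)$\Rightarrow$(iii) and the same norming-functional estimates linking slices of $B_{\Xs}$ to the condition $\|x\pm y\|\geq 2-\eps$ of Proposition~\ref{prop: reformulations of loc oct}. The only (immaterial) structural difference is that you close the chain with (iii'')$\Rightarrow$(ii) and prove (i)$\Leftrightarrow$(ii) separately in both directions, whereas the paper runs the single cycle (i)$\Rightarrow$(ii)$\Rightarrow$(iii)$\Rightarrow$(iii')$\Rightarrow$(iii'')$\Rightarrow$(i); the underlying computations are identical.
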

\begin{proof}%[Proof of Theorem \ref{thm: omnibus thm for the weak* ld2p}]
(i)$\Rightarrow$(ii).
Assume that (i) holds. Let $x\in S_X$ and let $\eps>0$.
%Consider the weak$^\ast$ slice $S^\ast(x,\eps)$ of $B_{X^\ast}$.
By~(i), there are $x_1^\ast,x_2^\ast\in B_{X^\ast}$ and $y\in S_X$ such that
\[x_1^\ast(x),x_2^\ast(x)>1-\eps\quad \text{and}\quad x_1^\ast(y)-x_2^\ast(y)>2-\eps.\]
By (the equivalence (i)$\Leftrightarrow$(iii) of) Proposition \ref{prop: reformulations of loc oct}, it suffices to show that
\[\|x\pm y\|\geq 2-2\eps.\]
Since $x_1^\ast(x),x_2^\ast(x)> 1-\eps$ and $x_1^\ast(y),-x_2^\ast(y)>1-\eps$, it follows that
\[
\|x+y\|\geq\xs_1(x+y)>2-2\eps
\]
and
\[
\|x-y\|\geq\xs_2(x-y)>2-2\eps.
\]

\medskip
(ii)$\Rightarrow$(iii).
Assume that (ii) holds.
Let $x\in S_X$, let $\alpha\in[-1,1]$, and let $0<\eps_0<\eps$.
Choose $y\in S_X$ to satisfy
\[
\|sx+y\|\geq\frac{1+\eps_0}{1+\eps}\bigl(|s|+\|y\|\bigr)\quad\text{for all $s\in\R$.}
\]
Now let $|\gamma|\leq1+\eps_0$.
Defining $g\in\bigl(\spann\{x,y\}\bigr)^\ast$ by
\[
g(x)=\alpha,\quad g(y)=\gamma,
\]
one has, for all $s\in\R$,
\[
\bigl|g(sx+y)\bigr|\leq|s||\alpha|+|\gamma|\leq(1+\eps_0)\bigl(|s|+\|y\|\bigr)\leq(1+\eps)\|sx+y\|,
\]
hence $\|g\|\leq 1+\eps$. The desired $\ys$ can be defined to be any norm-preserving extension to $X$ of $g$.

\medskip
(iii)$\Rightarrow$(iii')$\Rightarrow$(iii'') is obvious.

\medskip
(iii'')$\Rightarrow$(i).
Let $x\in S_X$ and $\eps>0$ be arbitrary, and let $y\in S_X$ and $\xs_1,\xs_2\in\Xs$ be as in (iii'').
It suffices to observe that $\frac{\xs_1}{1+\eps},\frac{\xs_2}{1+\eps}\in B_{\Xs}$,
\[
\biggl\|\frac{\xs_1}{1+\eps}-\frac{\xs_2}{1+\eps}\biggr\|>\frac{\bigl|\xs_1(y)-\xs_2(y)\bigr|}{1+\eps}>\frac{2-\eps}{1+\eps},
\]
and, for all $i\in\{1,2\}$,
\[
\frac{\xs_i}{1+\eps}(x)=\frac1{1+\eps}.
\]
\end{proof}
The following theorem is an obvious consequence of Proposition~\ref{prop: X d2p = X** weak* d2p} and Theorem~\ref{thm: omnibus thm for the weak* ld2P}.
\begin{theorem}\label{thm: omnibus thm for the ld2P}
Let $X$ be a Banach space.
The following assertions are equivalent:
\begin{itemize}
\item[{\rm(i)}]
$X$ has the local diameter $2$ property;
\item[{\rm(ii)}]
$X^\ast$ is locally octahedral.
\end{itemize}
\end{theorem}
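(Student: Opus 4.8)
The plan is to obtain this theorem by simply chaining together the two results that the excerpt cites, with no new computation required. The key observation is that Theorem~\ref{thm: omnibus thm for the weak* ld2P} is stated for an \emph{arbitrary} Banach space, so it may be applied with $X^\ast$ in place of $X$.

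First I would apply Theorem~\ref{thm: omnibus thm for the weak* ld2P} to the Banach space $X^\ast$: the equivalence (i)$\Leftrightarrow$(ii) there yields that $(X^\ast)^\ast = X^{\ast\ast}$ has the weak$^\ast$ local diameter $2$ property if and only if $X^\ast$ is locally octahedral. Next I would invoke Proposition~\ref{prop: X d2p = X** weak* d2p}, whose local-diameter-$2$ clause asserts that $X$ has the local diameter $2$ property if and only if $X^{\ast\ast}$ has the weak$^\ast$ local diameter $2$ property. Combining these two equivalences through the common middle statement ``$X^{\ast\ast}$ has the weak$^\ast$ local diameter $2$ property'' gives exactly (i)$\Leftrightarrow$(ii) of the theorem.

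There is no real obstacle here; the only point worth a line of care is to confirm that the hypotheses of Theorem~\ref{thm: omnibus thm for the weak* ld2P} impose no restriction preventing its use with base space $X^\ast$ (they do not, since it applies to every Banach space), and that the predual appearing in Proposition~\ref{prop: X d2p = X** weak* d2p} is matched correctly with the space $X^\ast$ whose dual is $X^{\ast\ast}$. Accordingly the write-up will be two or three sentences, with no displayed equations needed.
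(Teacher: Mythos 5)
Your proposal is correct and is exactly the route the paper takes: the theorem is stated there as an obvious consequence of Proposition~\ref{prop: X d2p = X** weak* d2p} and Theorem~\ref{thm: omnibus thm for the weak* ld2P} applied to $X^\ast$. Your extra remark about matching the predual correctly is the only point of care, and you have handled it properly.
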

%
\begin{comment}
\begin{proof}
Comparing side by side these conditions with the corresponding conditions in Theorem~\ref{thm: omnibus thm for the weak* ld2P} for $X^\ast$, we note the equivalence of (i) and (ii) by Proposition~\ref{prop: X d2p = X** weak* d2p}. Conditions (iii), (iii'), and (iii'') and the corresponding conditions in Theorem~\ref{thm: omnibus thm for the weak* ld2P} for $X^\ast$ are also equivalent. This is obvious in one direction, and in the opposite direction, it is routine to verify, e.g., by the local principle of reflexivity.
\end{proof}
\end{comment}
%
\begin{theorem}\label{thm: omnibus thm for the weak* d2P}
Let $X$ be a Banach space.
The following assertions are equivalent:
\begin{itemize}
\item[{\rm(i)}]
$\Xs$ has the weak$^{\ast}$ diameter $2$ property;
\item[{\rm(ii)}]
$X$ is weakly octahedral;
\item[{\rm(iii)}]
for every finite-dimensional subspace $E$ of $X$, every $\xs\in B_{\Xs}$, every $\eps>\nobreak0$, and every $\eps_0\in(0,\eps)$,
there is a $y\in S_X$ such that, whenever $|\gamma|\leq1+\eps_0$, there is a $\ys\in\Xs$ satisfying
\[
\ys|_E=\xs|_E,\quad \ys(y)=\gamma,\quad\text{and}\quad \|\ys\|\leq1+\eps.
\]
\item[{\rm(iii')}]
for every finite-dimensional subspace $E$ of $X$, every $\xs\in B_{\Xs}$, and every $\eps>\nobreak0$,
there are $y\in S_X$ and $\xs_1,\xs_2\in\Xs$ satisfying
\begin{equation*}\label{eq: jatkude tingimus weak* d2P kirjelduses}
\xs_1|_E=\xs_2|_E=\xs|_E,\quad \xs_1(y)-\xs_2(y)>2-\eps,
\end{equation*}
and $\|\xs_1\|,\|\xs_2\|\leq 1+\eps$;
\end{itemize}
\end{theorem}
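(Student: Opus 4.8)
The plan is to prove the cycle (i)$\Rightarrow$(ii)$\Rightarrow$(iii)$\Rightarrow$(iii')$\Rightarrow$(i), reusing the equivalent forms of weak octahedrality from Proposition~\ref{prop: reformulations of weakly oct} and the Hahn--Banach/extension bookkeeping already employed for Theorem~\ref{thm: omnibus thm for the weak* ld2P}.

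\textbf{(i)$\Rightarrow$(ii).} Here I would verify condition (iii) of Proposition~\ref{prop: reformulations of weakly oct}. Given $x_1,\dots,x_n\in S_X$, $\xs\in B_{\Xs}$, and $0<\eps<1$, set $\eta=\eps/2$, $\delta=\eps^2/2$, and consider the nonempty relatively weak$^\ast$ open set
\[
W=\{z^\ast\in B_{\Xs}\colon |z^\ast(x_i)-\xs(x_i)|<\delta,\ i=1,\dots,n\},
\]
which contains $\xs$. By the weak$^\ast$ diameter $2$ property there are $u^\ast,v^\ast\in W$ with $\|u^\ast-v^\ast\|>2-\eta$, hence a $y\in S_X$ with (after possibly swapping $u^\ast$ and $v^\ast$) $u^\ast(y)>1-\eta$ and $-v^\ast(y)>1-\eta$, since $\|u^\ast\|,\|v^\ast\|\leq1$. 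Testing $x_i+ty$ against $u^\ast$ and against $-v^\ast$ yields $\|x_i+ty\|>|\xs(x_i)|-\delta+t(1-\eta)$ for all $i$ and all $t>0$, and the choice of $\eta,\delta$ turns this into $\|x_i+ty\|\geq(1-\eps)(|\xs(x_i)|+t)$ for $t\geq\eps$.

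\textbf{(ii)$\Rightarrow$(iii).} Given a finite-dimensional $E$, $\xs\in B_{\Xs}$, $\eps>0$, and $\eps_0\in(0,\eps)$, use weak octahedrality to pick $y\in S_X$ with $\|x+y\|\geq\frac{1+\eps_0}{1+\eps}(|\xs(x)|+1)$ for all $x\in E$; a homogeneity argument (replacing $x$ by $x/|s|$ and, if $s<0$, by its negative) upgrades this to $\|x+sy\|\geq\frac{1+\eps_0}{1+\eps}(|\xs(x)|+|s|)$ for all $x\in E$ and $s\in\R$, and note that $y\notin E$. For $|\gamma|\leq1+\eps_0$ define $g$ on $E\oplus\R y$ by $g|_E=\xs|_E$ and $g(y)=\gamma$; then $|g(x+sy)|\leq|\xs(x)|+(1+\eps_0)|s|\leq(1+\eps_0)(|\xs(x)|+|s|)\leq(1+\eps)\|x+sy\|$, so $\|g\|\leq1+\eps$ and any Hahn--Banach extension $\ys\in\Xs$ of $g$ does the job. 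The step \textbf{(iii)$\Rightarrow$(iii')} is then immediate: apply (iii) with any $\eps_0\in(0,\eps)$ and $\gamma=\pm(1+\eps_0)$ to get $\xs_1,\xs_2$ with $\xs_1|_E=\xs_2|_E=\xs|_E$, $\xs_1(y)-\xs_2(y)=2(1+\eps_0)>2-\eps$, and $\|\xs_1\|,\|\xs_2\|\leq1+\eps$.

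\textbf{(iii')$\Rightarrow$(i).} Given a nonempty relatively weak$^\ast$ open $U\subseteq B_{\Xs}$, pick $z^\ast\in U$ together with a basic weak$^\ast$ neighbourhood $\{w^\ast\in B_{\Xs}\colon |(w^\ast-z^\ast)(x_i)|<\delta,\ i=1,\dots,n\}\subseteq U$, put $E=\spn\{x_1,\dots,x_n\}$, and apply (iii') with $\xs=z^\ast$ and a small $\eps>0$. The functionals $\xs_1/(1+\eps)$ and $\xs_2/(1+\eps)$ lie in $B_{\Xs}$, agree with $z^\ast/(1+\eps)$ on $E$ (hence lie in $U$ once $\eps$ is small compared with $\delta$ and $\max_i\|x_i\|$), and are at distance $>\frac{2-\eps}{1+\eps}$ apart; letting $\eps\to0$ gives $\diam U=2$.

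I expect the only genuinely delicate implication to be (i)$\Rightarrow$(ii): converting ``some norm-one $y$ nearly attains $\|u^\ast-v^\ast\|$ on a weak$^\ast$ neighbourhood of $\xs$'' into the octahedral inequality carrying the absolute value $|\xs(x_i)|$, and accepting the restriction $t\geq\eps$ there (the small-$t$ case being absorbed afterwards inside Proposition~\ref{prop: reformulations of weakly oct}). The remaining implications are the same computations as in Theorem~\ref{thm: omnibus thm for the weak* ld2P}, with a finite-dimensional subspace and a norm-preserving extension replacing the single point $x$.
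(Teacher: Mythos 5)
Your proposal is correct and follows essentially the same route as the paper: the same cycle of implications, the same reduction to condition (iii) of Proposition~\ref{prop: reformulations of weakly oct} with the restriction $t\geq\eps$, the same Hahn--Banach extension argument for (ii)$\Rightarrow$(iii), and the same rescaling of $\xs_1,\xs_2$ for (iii')$\Rightarrow$(i). The only (minor, and arguably cleaner) difference is in (i)$\Rightarrow$(ii), where you test $x_i+ty$ against both $u^\ast$ and $-v^\ast$ and take the maximum to recover $|\xs(x_i)|$ directly, whereas the paper picks the functional whose value at $y$ matches the sign of $\xs(x_i)$ and must therefore treat the case $\xs(x_i)=0$ separately with a $\delta$ depending on $\min_i|\xs(x_i)|$.
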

\begin{proof}
(i)$\Rightarrow$(ii).
Assume that (i) holds. Let $x_1,\dotsc,x_n\in S_X$ ($n\in\N$), let $\xs\in B_{\Xs}$, and let $0<\eps<1$.
Pick $\delta\in(0,\eps^2)$ satisfying $\delta<\eps\,|\xs(x_i)|$ for all $i\in\{1,\dotsc,n\}$ with $|\xs(x_i)|\not=0$.
By (i), there are $\us,\vs\in B_{\Xs}$ and $y\in S_X$ such that
\[
\bigl|\us(x_i)-\xs(x_i)\bigr|<\delta
\quad
%\!\!
\text{and}\quad
%\!\!
\bigl|\vs(x_i)-\xs(x_i)\bigr|<\delta
\quad\text{for all $i\in\{1,\dotsc,n\}$,}
\]
and
\[
\vs(y)-\us(y)>2-\eps.
\]
Since $\vs(y)\leq1$ and $\us(y)\geq-1$, it follows that $\vs(y)>1-\eps$ and $\us(y)<-1+\eps$.
Let $i\in\{1,\dotsc,n\}$ and $t\geq\eps$ be arbitrary.
If $\xs(x_i)\not=0$, then, choosing $\zs\in\{\us,\vs\}$ so that $\xs(x_i)$ and $\zs(y)$
(and thus also $\zs(x_i)$ and $\zs(y)$) have the same sign, one has %\marginpar{\tiny Märt: Kas see on vaja pikalt läbi teha, et "have the same sign"?}
\begin{align*}
\|x_i+ty\|
&\geq\bigl|\zs(x_i)+t\zs(y)\bigr|
=|\zs(x_i)|+t|\zs(y)|\\
&\geq|\xs(x_i)|-|\xs(x_i)-\zs(x_i)|+t|\zs(y)|\\
&\geq|\xs(x_i)|-\eps|\xs(x_i)|+(1-\eps)t\\
&=(1-\eps)\bigl(|\xs(x_i)|+t\bigr).
\end{align*}
If $\xs(x_i)=0$, then
\begin{align*}
\|x_i+ty\|
&\geq\bigl|\us(x_i)+t\us(y)\bigr|\geq t|\us(y)|-|\us(x_i)|\\
&\geq(1-\eps)t-\eps^2
\geq(1-\eps)t-t\eps=(1-2\eps)\bigl(|\xs(x_i)|+t\bigr),
\end{align*}
and it follows that $X$ is weakly octahedral.

\medskip
(ii)$\Rightarrow$(iii).
Assume that (ii) holds.
Let $E$ be a finite-dimensional subspace of $X$, let $\xs\in B_{\Xs}$, and let $0<\eps_0<\eps$.
Choose $y\in S_X$ to satisfy
\[
|\xs(x)|+|t|\leq\frac{1+\eps}{1+\eps_0}\|x+ty\|\quad\text{for all $x\in E$ and all $t\in\R$.}
\]
Letting $\gamma\in[-1-\eps_0,1+\eps_0]$, and defining $g\in\bigl(\spann(E\cup\{y\})\bigr)^\ast$ by $g|_E=\xs|_E$ and $g(y)=\gamma$,
it suffices to show that $\|g\|\leq1+\eps$ (because, in this case, one may define the desired $\ys\in\Xs$ to be any norm-preserving extension of $g$).
To this end, it remains to observe that, whenever $x\in E$ and $t\in\R$,
\begin{align*}
|g(x+ty)|
&\leq|\xs(x)|+|t|\,|\gamma|\leq(1+\eps_0)(|\xs(x)|+|t|)\\
&\leq(1+\eps)\|x+ty\|.
\end{align*}

\medskip
(iii)$\Rightarrow$(iii') is obvious.

\medskip
(iii')$\Rightarrow$(i).
%Suppose that (iv') holds.
Let $\xs\in B_{\Xs}$, let $x_1,\dotsc,x_n\in S_X$ ($n\in\N$), and let $\eps>0$.
Put $E:=\spann\{x_1,\dotsc,x_n\}$, and let $y\in S_X$ and $\xs_1,\xs_2\in\Xs$ be as in (iii').
It suffices to observe that $\frac{\xs_1}{1+\eps},\frac{\xs_2}{1+\eps}\in B_{\Xs}$,
\[
\biggl\|\frac{\xs_1}{1+\eps}-\frac{\xs_2}{1+\eps}\biggr\|\geq\frac{|\xs_1(y)-\xs_2(y)|}{1+\eps}>\frac{2-\eps}{1+\eps},
\]
and, for all $i\in\{1,\dotsc,n\}$ and $j\in\{1,2\}$,
\[
\biggl|\xs(x_i)-\frac{\xs_j}{1+\eps}(x_i)\biggr|=\biggl|\xs(x_i)-\frac{\xs(x_i)}{1+\eps}\biggr|=\frac\eps{1+\eps}|\xs(x_i)|<\eps.
\]
\end{proof}
The following theorem is an obvious consequence of Proposition~\ref{prop: X d2p = X** weak* d2p} and Theorem~\ref{thm: omnibus thm for the weak* d2P}.
\begin{theorem}\label{thm: omnibus thm for the d2P}
Let $X$ be a Banach space.
The following assertions are equivalent:
\begin{itemize}
\item[{\rm(i)}]
$X$ has the diameter $2$ property;
\item[{\rm(ii)}]
$X^\ast$ is weakly octahedral.
\end{itemize}
\end{theorem}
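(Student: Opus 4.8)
The plan is to obtain the equivalence directly by combining the two results referred to in the statement, applied not to $X$ but to the Banach space $X^\ast$.

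\emph{Step one.} By Proposition~\ref{prop: X d2p = X** weak* d2p}, $X$ has the diameter $2$ property if and only if $X^{\ast\ast}$ has the weak$^\ast$ diameter $2$ property (the weak$^\ast$ topology on $X^{\ast\ast}$ being $\sigma(X^{\ast\ast},X^\ast)$, as in Definition~\ref{def: weak* d2p-s}). This reduces the task to characterising when $X^{\ast\ast}$ has the weak$^\ast$ diameter $2$ property.

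\emph{Step two.} Theorem~\ref{thm: omnibus thm for the weak* d2P} asserts, for an \emph{arbitrary} Banach space $Y$, that $Y^\ast$ has the weak$^\ast$ diameter $2$ property if and only if $Y$ is weakly octahedral. I would apply this with $Y:=X^\ast$. Then $Y^\ast=(X^\ast)^\ast=X^{\ast\ast}$, and the conclusion reads: $X^{\ast\ast}$ has the weak$^\ast$ diameter $2$ property if and only if $X^\ast$ is weakly octahedral, where weak octahedrality is meant in the sense of Definition~\ref{def: locally octahedral, weakly octahedral} applied verbatim to the Banach space $X^\ast$ (so no appeal to the principle of local reflexivity is needed here --- we never transfer the property from $X^\ast$ to $X^{\ast\ast}$).

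\emph{Step three.} Concatenating the two equivalences: $X$ has the diameter $2$ property precisely when $X^{\ast\ast}$ has the weak$^\ast$ diameter $2$ property, and the latter holds precisely when $X^\ast$ is weakly octahedral; hence (i) and (ii) are equivalent, as claimed. There is no real obstacle; the only point requiring care is the bookkeeping in Step two, namely substituting $X^\ast$ for the abstract Banach space in Theorem~\ref{thm: omnibus thm for the weak* d2P} so that its dual becomes $X^{\ast\ast}$ and the octahedrality statement lands on $X^\ast$ itself. If desired, one could also spell out the intermediate reformulations (analogues of conditions (iii) and (iii') of Theorem~\ref{thm: omnibus thm for the weak* d2P}, now with $X^\ast$ in place of $X$), but these add nothing to the bare equivalence (i)$\Leftrightarrow$(ii).
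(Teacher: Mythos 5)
Your proposal is correct and is exactly the paper's argument: the authors state the theorem as "an obvious consequence of Proposition~\ref{prop: X d2p = X** weak* d2p} and Theorem~\ref{thm: omnibus thm for the weak* d2P}", i.e.\ they too reduce to the weak$^\ast$ diameter $2$ property of $X^{\ast\ast}$ and then apply the weak$^\ast$ characterisation with $X^\ast$ in the role of the underlying space. Your remark that no local reflexivity is needed because the octahedrality lands directly on $X^\ast$ is the right bookkeeping observation.
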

%
\begin{comment}
\begin{proof}
Conditions (i) and (ii) are equivalent, this is immediate from Proposition~\ref{prop: X d2p = X** weak* d2p} and Theorem~\ref{thm: omnibus thm for the weak* d2P}.
\end{proof}
\end{comment}
%
\begin{theorem}\label{thm: omnibus thm for the weak* Sd2P}
Let $X$ be a Banach space.
The following assertions are equivalent:
\begin{itemize}
\item[{\rm(i)}]
$\Xs$ has the weak$^{\ast}$ strong diameter $2$ property;
\item[{\rm(ii)}]
$X$ is octahedral;
\item[{\rm(iii)}]
whenever $E$ is a finite-dimensional subspace of $X$, $n\in\N$, $\xs_1,\dotsc,\xs_n\in B_{\Xs}$, $\eps>\nobreak0$, and $\eps_0\in(0,\eps)$,
there is a $y\in S_X$ such that, whenever $|\gamma_i|\leq1+\eps_0$, $i\in\{1,\dotsc,n\}$, there are $\ys_i\in\Xs$ satisfying
\[
\ys_i|_E=\xs_i|_E,\quad \ys_i(y)=\gamma_j,\quad\text{and}\quad \|\ys_i\|\leq1+\eps\quad\text{for all $i\in\{1,\dotsc,n\}$;}
\]
\item[{\rm(iii')}]
whenever $E$ is a finite-dimensional subspace of $X$, $n\in\N$, $\xs_1,\dotsc,\xs_n\in B_{\Xs}$, and $\eps>\nobreak0$,
there are $y\in S_X$ and $\xs_{1i},\xs_{2i}\in\nobreak\Xs$, $i\in\{1,\dotsc,n\}$, satisfying
\begin{equation*}\label{eq: jatkude tingimus weak* Sd2P kirjelduses}
\xs_{1i}|_E=\xs_{2i}|_E=\xs_i|_E,\quad \xs_{1i}(y)-\xs_{2i}(y)>2-\eps,
\end{equation*}
and $\|\xs_{1i}\|,\|\xs_{2i}\|\leq 1+\eps$ for all $i\in\{1,\dotsc,n\}$.
\end{itemize}
\end{theorem}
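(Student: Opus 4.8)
The plan is to establish the chain of implications (i)$\Rightarrow$(ii)$\Rightarrow$(iii)$\Rightarrow$(iii')$\Rightarrow$(i), following the same pattern as the proofs of Theorems~\ref{thm: omnibus thm for the weak* ld2P} and~\ref{thm: omnibus thm for the weak* d2P}; the only genuinely new ingredient is a short averaging argument in (i)$\Rightarrow$(ii) that transfers ``largeness'' from a convex combination of weak$^\ast$ slices back to the individual slices.

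For (i)$\Rightarrow$(ii) I would verify the reformulation of octahedrality in Proposition~\ref{prop: reformulations of oct}(iii). Given $x_1,\dots,x_n\in S_X$ and $\eps>0$, fix small $\delta,\eta>0$ with $\delta+n\eta\leq\eps$ and consider the weak$^\ast$ slices $S_i:=\{\xs\in B_{\Xs}\colon\xs(x_i)>1-\delta\}$, which are nonempty since $\|x_i\|=1$. By~(i), $\diam\bigl(\frac1n\sum_{i=1}^n S_i\bigr)=2$, so there are $\us=\frac1n\sum_i\us_i$ and $\vs=\frac1n\sum_i\vs_i$ with $\us_i,\vs_i\in S_i$ and $\|\us-\vs\|>2-\eta$; choose $y\in S_X$ with $(\us-\vs)(y)>2-\eta$. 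As $\vs(y)\geq-1$ this gives $\us(y)>1-\eta$, i.e.\ $\frac1n\sum_i\us_i(y)>1-\eta$, and since each $\us_i(y)\leq1$ it follows that $\us_j(y)>1-n\eta$ for every $j$. Hence $\|x_j+y\|\geq\us_j(x_j)+\us_j(y)>(1-\delta)+(1-n\eta)\geq2-\eps$ for every $j$, so $X$ is octahedral.

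For (ii)$\Rightarrow$(iii) I would first put octahedrality into two-sided homogeneous form: using Proposition~\ref{prop: reformulations of oct}(ii) together with homogeneity and $E=-E$, there is a $y\in S_X$ with $\|x+ty\|\geq\frac{1+\eps_0}{1+\eps}\bigl(\|x\|+|t|\bigr)$ for all $x\in E$ and all $t\in\R$ (which in particular forces $y\notin E$). Given scalars $|\gamma_i|\leq1+\eps_0$, define $g_i$ on $\spann(E\cup\{y\})$ by $g_i|_E=\xs_i|_E$ and $g_i(y)=\gamma_i$; then $|g_i(x+ty)|\leq|\xs_i(x)|+|t|\,|\gamma_i|\leq(1+\eps_0)\bigl(\|x\|+|t|\bigr)\leq(1+\eps)\|x+ty\|$, so $\|g_i\|\leq1+\eps$ and any Hahn--Banach extension $\ys_i$ of $g_i$ works. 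The implication (iii)$\Rightarrow$(iii') is then immediate: apply (iii) with $\eps_0:=\eps/2$, take all $\gamma_i=1+\eps_0$ to obtain the $\xs_{1i}$ and all $\gamma_i=-(1+\eps_0)$ to obtain the $\xs_{2i}$, so that $\xs_{1i}(y)-\xs_{2i}(y)=2(1+\eps_0)>2-\eps$.

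Finally, for (iii')$\Rightarrow$(i), let $S_i=\{\xs\in B_{\Xs}\colon\xs(x_i)>1-\alpha_i\}$ be weak$^\ast$ slices ($x_i\in S_X$), let $\lambda_1,\dots,\lambda_n\geq0$ with $\sum_i\lambda_i=1$, and let $\eps>0$ be small. Since $\sup_{\xs\in B_{\Xs}}\xs(x_i)=1$, pick $\zs_i\in B_{\Xs}$ with $\zs_i(x_i)>(1-\alpha_i)(1+\eps)$, put $E:=\spann\{x_1,\dots,x_n\}$, and apply (iii') to $\zs_1,\dots,\zs_n$ to get $y\in S_X$ and $\xs_{1i},\xs_{2i}$ with $\xs_{ji}|_E=\zs_i|_E$ and $\|\xs_{ji}\|\leq1+\eps$ for $j\in\{1,2\}$, and $\xs_{1i}(y)-\xs_{2i}(y)>2-\eps$. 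Then $\frac{\xs_{ji}}{1+\eps}\in B_{\Xs}$ with $\frac{\xs_{ji}}{1+\eps}(x_i)=\frac{\zs_i(x_i)}{1+\eps}>1-\alpha_i$, so $\frac{\xs_{ji}}{1+\eps}\in S_i$; hence $\us:=\sum_i\lambda_i\frac{\xs_{1i}}{1+\eps}$ and $\vs:=\sum_i\lambda_i\frac{\xs_{2i}}{1+\eps}$ belong to $\sum_i\lambda_i S_i$ and $\|\us-\vs\|\geq(\us-\vs)(y)=\sum_i\lambda_i\frac{\xs_{1i}(y)-\xs_{2i}(y)}{1+\eps}>\frac{2-\eps}{1+\eps}$, so $\diam\bigl(\sum_i\lambda_i S_i\bigr)=2$ on letting $\eps\to0$. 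I expect the only real friction to be the bookkeeping of constants --- seeing that the averaging loss $n\eta$ and the slice width $\delta$ both fit inside $\eps$ in (i)$\Rightarrow$(ii), and choosing the witnesses $\zs_i$ deep enough inside $S_i$ in (iii')$\Rightarrow$(i) so that the rescaled functionals stay in the prescribed slices; conceptually everything parallels the earlier omnibus theorems.
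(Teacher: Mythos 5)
Your proposal is correct and follows essentially the same route as the paper's proof: the averaging step in (i)$\Rightarrow$(ii) (transferring largeness of $\frac1n\sum_i S_i$ to each individual slice and invoking Proposition~\ref{prop: reformulations of oct}(iii)), the Hahn--Banach extension of the $g_i$ in (ii)$\Rightarrow$(iii), and the rescaling $\xs_{ji}/(1+\eps)$ in (iii')$\Rightarrow$(i) are all exactly the paper's arguments, with only cosmetic differences in the bookkeeping of constants. No gaps.
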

\begin{proof}
The equivalence (i)$\Leftrightarrow$(ii) was pointed out in \cite[p.~12]{G}. Since no details of the proof were given in \cite{G}, we include the proof for completeness.

(i)$\Rightarrow$(ii). Assume that (i) holds. Let $x_1,\dotsc,x_n\in S_X$ ($n\in\mathbb N$) and let $\eps>0$. By (i), for every $i\in\{1,\dotsc,n\}$, there are $x_{1i}^\ast,x_{2i}^\ast\in B_{\Xs}$ and $y\in S_X$ such that
\[
\xs_{1i}(x_i),\xs_{2i}(x_i)>1-\eps\quad\text{and}\quad\frac1n\sum_{i=1}^n\big(x_{1i}^\ast(y)-x_{2i}^\ast(y)\big)>2-\frac{\eps}{n}.
\]
For every $i\in\{1,\dotsc,n\}$, since $x_{1i}^\ast(y)>1-\eps$, one has
\[
\|x_i+y\|\geq x_{1i}^\ast(x_i+y)>2-2\eps,
\]
and $X$ is octahedral by (the equivalence (i)$\Leftrightarrow$(iii) of) Proposition~\ref{prop: reformulations of oct}.

\medskip
(ii)$\Rightarrow$(iii). Assume that (ii) holds.
Let $E\subset X$ be a finite-dimensional subspace, let $n\in\N$, let $\xs_1,\dots,\xs_n\in B_\Xs$, and let $0<\eps_0<\eps$.
Choose $y\in S_X$ to satisfy
\[
\|x+ty\|\geq\frac{1+\eps_0}{1+\eps}\bigl(\|x\|+t\bigr)\quad\text{for all $x\in S_E$ and $t>0$.}
\]
Now let $|\gamma_i|\leq1+\eps_0$, $i\in\{1,\dots,n\}$.
For every $i\in\{1,\dotsc,n\}$, defining $g_i\in\Bigl(\spann \bigl(E\cup\{y\}\bigl)\Bigr)^\ast$ by
\[
g_i|_E=\xs_i|_E,\quad g_i(y)=\gamma_i,
\]
one has, for all $x\in S_E$ and $t>0$,
\[
\bigl|g_i(x+ty)\bigr|\leq|\xs_i(x)|+t|\gamma_i|\leq(1+\eps_0)\bigl(|\xs_i(x)|+t\bigr)\leq(1+\eps)\|x+ty\|,
\]
hence $\|g_i\|\leq 1+\eps$. The desired $\ys_1,\dotsc,\ys_n$ can be defined to be any norm-preserving extension to $X$
of $g_1,\dotsc,g_n$, respectively.

\medskip
(iii)$\Rightarrow$(iii') is obvious.

\medskip
(iii')$\Rightarrow$(i).
Let $n\in\N$, let $x_1,\dots, x_n\in S_X$, let $\lambda_1,\dotsc,\lambda_n\geq 0$, $\sum_{i=1}^{n}\lambda_i=1$,
and let $\eps>0$.
For every $i\in\{1,\dotsc,n\}$, choose $\xs_i\in B_{\Xs}$, so that $\xs_i(x_i)>1-\eps$,
and let $y\in S_X$ and $\xs_{11},\xs_{21},\dotsc,\xs_{1n},\xs_{2n}\in\nobreak\Xs$ as in (iii'), where $E=\spann\{x_1,\dotsc,x_n\}$.
It suffices to observe that, for all $i\in\{1,\dotsc,n\}$ and $j\in\{1,2\}$,
one has $\frac{\xs_{ji}}{1+\eps}\in B_{\Xs}$,
\[
\frac{\xs_{ji}}{1+\eps}(x_i)=\frac{\xs_i(x_i)}{1+\eps}>\frac{1-\eps}{1+\eps},
\]
and
\[
\biggl\|\sum_{i=1}^{n}\lambda_i\frac{\xs_{1i}}{1+\eps}-\sum_{i=1}^{n}\lambda_i\frac{\xs_{2i}}{1+\eps}\biggr\|
>\frac{\Bigl|\sum_{j=1}^{n}\lambda_j\bigl(\xs_{1i}(y)-\xs_{2i}(y)\Bigr)\Bigr|}{1+\eps}>\frac{2-\eps}{1+\eps}.
\]
\end{proof}
The following theorem is an obvious consequence of Proposition~\ref{prop: X d2p = X** weak* d2p} and Theorem~\ref{thm: omnibus thm for the weak* Sd2P}.
\begin{theorem}\label{thm: omnibus thm for the sd2P}
Let $X$ be a Banach space.
The following assertions are equivalent:
\begin{itemize}
\item[{\rm(i)}]
$X$ has the strong diameter $2$ property;
\item[{\rm(ii)}]
$\Xs$ is octahedral.
\end{itemize}
\end{theorem}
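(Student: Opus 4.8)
The plan is to obtain the theorem with essentially no extra work, by combining two results already in hand: Theorem~\ref{thm: omnibus thm for the weak* Sd2P}, applied to the Banach space $X^\ast$ in place of $X$, together with Proposition~\ref{prop: X d2p = X** weak* d2p}. Both of these are stated for an arbitrary (nontrivial, real) Banach space, so there is no obstruction to plugging a dual space into the former.

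In detail, I would argue as follows. Applying the equivalence (i)$\Leftrightarrow$(ii) of Theorem~\ref{thm: omnibus thm for the weak* Sd2P} with $X^\ast$ in the role of $X$, we get that the bidual $(X^\ast)^\ast = X^{\ast\ast}$ has the weak$^\ast$ strong diameter $2$ property if and only if $X^\ast$ is octahedral. On the other hand, Proposition~\ref{prop: X d2p = X** weak* d2p} says that $X$ has the strong diameter $2$ property if and only if $X^{\ast\ast}$ has the weak$^\ast$ strong diameter $2$ property. Concatenating these two equivalences yields exactly ``$X$ has the strong diameter $2$ property $\Leftrightarrow$ $X^\ast$ is octahedral'', which is the assertion. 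This is the same pattern by which Theorems~\ref{thm: omnibus thm for the ld2P} and~\ref{thm: omnibus thm for the d2P} were deduced from their weak$^\ast$ counterparts.

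I do not anticipate any genuine difficulty: the proof is a one-line composition of earlier statements, and the only point worth a moment's thought is that Theorem~\ref{thm: omnibus thm for the weak* Sd2P} is formulated generally enough to be legitimately applied to the dual space $X^\ast$ (which need be neither reflexive nor separable). For completeness one could also give a self-contained argument bypassing the bidual: for (ii)$\Rightarrow$(i) one mimics (ii)$\Rightarrow$(iii)$\Rightarrow$(iii')$\Rightarrow$(i) of Theorem~\ref{thm: omnibus thm for the weak* Sd2P}, extending functionals by Hahn--Banach on $\spann(E\cup\{y\})\subseteq X^\ast$ and rescaling to return to $B_{X^\ast}$; for (i)$\Rightarrow$(ii), given $\xs_1,\dots,\xs_n\in S_{\Xs}$ one chooses $x_i\in S_X$ with $\xs_i(x_i)$ close to $1$, applies the strong diameter $2$ property to $\frac1n\sum_{i=1}^n S(\xs_i,\eps)$ to find elements whose averaged difference has norm near $2$, norms this difference by some $\ys\in S_{\Xs}$, and deduces $\|\xs_i+\ys\|\approx 2$ for every $i$, which gives octahedrality via Proposition~\ref{prop: reformulations of oct}(iii). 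But the route through Proposition~\ref{prop: X d2p = X** weak* d2p} is cleaner and is the one I would present.
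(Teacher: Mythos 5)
Your proposal is correct and is precisely the paper's own argument: the paper states Theorem~\ref{thm: omnibus thm for the sd2P} as an immediate consequence of Proposition~\ref{prop: X d2p = X** weak* d2p} and Theorem~\ref{thm: omnibus thm for the weak* Sd2P} applied to $X^\ast$, which is exactly the composition of equivalences you describe. Nothing further is needed.
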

%
\begin{comment}
\begin{proof}
Conditions (i) and (ii) are equivalent, this is immediate from Proposition~\ref{prop: X d2p = X** weak* d2p} and Theorem~\ref{thm: omnibus thm for the weak* Sd2P}.
\end{proof}
\end{comment}

\begin{comment}
\begin{proposition}[see, e.g., {\cite[Theorem III.2.5]{DGZ}}] For any Banach space $X$ the following are equivalent:
\begin{itemize}
\item[{\rm(i)}] $X$ contains a subspace isomorphic to $\ell_1$;
\item[{\rm(ii)}] there exists an equivalent octahedral norm on $X$.
\end{itemize}
\end{proposition}

\marginpar{\tiny Johann: Kas j\"{a}tame selle lause ja vastava j\"{a}reduse alles?}
\end{comment}

In \cite[Theorem III.2.5]{DGZ}, it was shown that a Banach space has an equivalent octahedral norm if and only if it contains an isomorphic copy of $\ell_1$.

\begin{corollary}
If a Banach space $X$ has the strong diameter $2$ property, then $X^\ast$ contains a subspace isomorphic to $\ell_1$.
\end{corollary}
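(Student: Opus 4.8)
The plan is to chain together the two facts that have just been recorded. First I would invoke Theorem~\ref{thm: omnibus thm for the sd2P}: since $X$ has the strong diameter $2$ property, its dual norm on $X^\ast$ is octahedral. In particular, $X^\ast$ carries \emph{some} octahedral norm, namely its own canonical norm, so a fortiori $X^\ast$ admits an equivalent octahedral norm (the identity serving as the isomorphism).

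Next I would feed this into the characterization from \cite[Theorem III.2.5]{DGZ} recalled just above: a Banach space admits an equivalent octahedral norm if and only if it contains a subspace isomorphic to $\ell_1$. Applying the "only if" direction to $X^\ast$ yields that $X^\ast$ contains a subspace isomorphic to $\ell_1$, which is exactly the assertion.

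There is essentially no obstacle here: the whole content has been isolated into Theorem~\ref{thm: omnibus thm for the sd2P} and the quoted theorem of Deville--Godefroy--Zizler, and the corollary is their immediate composition. The only point worth a word is the (trivial) observation that having an octahedral norm already witnesses the hypothesis "has an equivalent octahedral norm," so no renorming step is actually needed.

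\begin{proof}
By Theorem~\ref{thm: omnibus thm for the sd2P}, the norm on $X^\ast$ is octahedral; in particular, $X^\ast$ has an equivalent octahedral norm. Hence, by \cite[Theorem III.2.5]{DGZ}, $X^\ast$ contains a subspace isomorphic to $\ell_1$.
\end{proof}
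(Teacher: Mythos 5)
Your proof is correct and is precisely the argument the paper intends: the corollary is the immediate composition of Theorem~\ref{thm: omnibus thm for the sd2P} with the cited result \cite[Theorem III.2.5]{DGZ}, the only observation needed being that an octahedral norm is in particular an equivalent octahedral norm. Nothing further is required.
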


\section{Stability results}\label{sec: 4. Stability of octahedrality}
%*************************
\begin{comment}
\marginpar{\tiny Rainis: Kas sellest jutust on midagi lisada?\\
In fact, the $1$-sum of nontrivial Banach spaces $X$ and $Y$ has the strong diameter $2$ property if and only $X$ and $Y$ have the strong diameter $2$ property. (*** KAS SEE KEHTIB K\~OIGI DIAM. OMADUSTE KOHTA??? SD2P kohta on tarvilikkus [ABL] tulemus, piisavus on s\~onastatud sel kujul [ALN, THM. 2.7 (iii)], aga t\~oestatud juba [BGLP, Lemma 2.1 (ii)], kuid seal ei l\"ainud s\~onastatud tulemus ja t\~oestus kokku. *** )

It is also known that (see \cite[Lemma 2.1]{P}, see also \cite[theorem 2.7 (ii)]{ALN}) the $\infty$-sum has the diameter $2$ property, whenever one of its components has it;
(see \cite[Proposition 4.6]{ALN}) the $\infty$-sum has the strong diameter $2$ property, whenever one of its components has it;
(*** L2P KOHTA KEHTIB SAMA Kas sellele on viidet? V\~oib-olla l\"aheb l\"abi kohendatud tugeva d2p t\~oestus??? Igatahes on Johanni magistrit\"o\"os lk. 42 selle kehtimist kinnitatud. ***)

*** Oktaeedrilisus tuleks defineerida nii, et triviaalse Banachi ruumi norm ei ole oktaeedriline. Nt.
Iga $x\in B_X$ ja iga $\eps>0$ korral leidub $y\in S_X$ nii, et
}
\end{comment}
%************************
\begin{comment}In this section, we study the transition of octahedrality properties.
The section is partly motivated by the idea to provide octahedrality-based approach to known stability results on diameter 2 properties. We are convinced that in many cases this method is more convenient and preferable.
\end{comment}

We begin by recalling that the (local) diameter $2$ property is stable by taking $\ell_p$-sums not only if $p=1$ and $p=\infty$, but surprisingly for all $1\leq p\leq\infty$ (see \cite{ALN}, \cite{P}, and \cite{GP}).
(Some further development was carried out in \cite{ABL} where, instead of $p$-sums, product spaces with absolute norm were considered.)
If $1<p<\infty$, then $p$-sums of Banach spaces lack the strong diameter $2$ property (see \cite{ABL}; see also \cite{HL}).
However, if $p=1$ or $p=\infty$, then the $p$-sum may have the strong diameter $2$ property (see e.g. \cite[Theorem 2.7, (iii), and Proposition 4.6]{ALN} and \cite[Proposition 3.1]{ABL}).

%One can show in a fairly straightforward way that the $p$-sums of locally octahedral Banach spaces are locally octahedral. Even more can be said for $p=1$ or for $p=\infty$.

The following proposition is our main stability result for locally octahedral spaces.
%We start by showing that locally octahedral spaces are stable by forming $\ell_p$-sums.

\begin{proposition}\label{prop: p-sum LOH}
Let $X$ and $Y$ be Banach spaces.
\begin{itemize}
\item[{\rm(a)}]
If $X$ is locally octahedral, then $X\oplus_1 Y$ is locally octahedral.
\item[{\rm(b)}]
If $X$ and $Y$ are locally octahedral, and $1<p\leq\infty$, then $X\oplus_p Y$ is locally octahedral.
\item[{\rm(c)}]
If $X\oplus_p Y$ is locally octahedral, where $1<p\leq\infty$, then $X$ is locally octahedral.
\end{itemize}
\end{proposition}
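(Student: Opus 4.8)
The plan is to verify, for each of {\rm(a)}, {\rm(b)}, {\rm(c)}, the reformulation of local octahedrality given by the equivalence {\rm(i)}$\Leftrightarrow${\rm(iii)} of Proposition~\ref{prop: reformulations of loc oct}: a Banach space $Z$ is locally octahedral if and only if for every $z\in S_Z$ and every $\eps>0$ there is a $w\in S_Z$ with $\|z\pm w\|\geq 2-\eps$. For {\rm(a)}, given $(x,v)\in S_{X\oplus_1 Y}$ (so that $\|x\|+\|v\|=1$) and $\eps>0$, I would apply local octahedrality of $X$ to the vector $x$, i.e.\ the defining inequality for local octahedrality (Definition~\ref{def: locally octahedral, weakly octahedral}) with $s=\pm1$, to obtain $y\in S_X$ with $\|x\pm y\|\geq(1-\eps)(\|x\|+1)$, and then take $(y,0)\in S_{X\oplus_1 Y}$. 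A direct computation gives
\[
\|(x,v)\pm(y,0)\|=\|x\pm y\|+\|v\|\geq(1-\eps)(\|x\|+1)+(1-\|x\|)=2-\eps(\|x\|+1)\geq 2-2\eps,
\]
which, as $\eps$ is arbitrary, is what is needed.

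For {\rm(b)}, the idea is to spread the perturbation across the two coordinates in the same proportion as $(x,v)$ itself. Given $(x,v)\in S_{X\oplus_p Y}$ and $\eps>0$, I would use local octahedrality of $X$ and of $Y$ to choose $y_0\in S_X$ and $w_0\in S_Y$ with $\|(x/\|x\|)\pm y_0\|\geq2-\eps$ and $\|(v/\|v\|)\pm w_0\|\geq2-\eps$ (taking arbitrary unit vectors if $x$ or $v$ vanishes), and then set $(y,w):=(\|x\|y_0,\|v\|w_0)$. Since $(\|x\|,\|v\|)$ is a unit vector of $\ell_p^2$, the pair $(y,w)$ belongs to $S_{X\oplus_p Y}$, and using $\|x\pm\|x\|y_0\|=\|x\|\,\|(x/\|x\|)\pm y_0\|\geq(2-\eps)\|x\|$ together with the analogous estimate in the second coordinate one gets
\[
\|(x,v)\pm(y,w)\|_p\geq(2-\eps)\bigl\|(\|x\|,\|v\|)\bigr\|_p=2-\eps,
\]
with the obvious reading for $p=\infty$. (This argument in fact works for every $1\leq p\leq\infty$, but uses local octahedrality of both summands; part {\rm(a)} is the sharper statement for $p=1$.)

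The heart of the proposition is {\rm(c)}, and it is here that $1<p$ is essential. Given $x\in S_X$ and $\eps>0$, I would consider $(x,0)\in S_{X\oplus_p Y}$ and apply local octahedrality of $X\oplus_p Y$ with a small $\delta>0$ (to be fixed later), obtaining $(y',w)\in S_{X\oplus_p Y}$ with $\|(x,0)\pm(y',w)\|_p\geq2-\delta$, that is, $\|x\pm y'\|^p+\|w\|^p\geq(2-\delta)^p$ when $p<\infty$. The key step is to show that $\|w\|$ must be small: combining $\|x\pm y'\|\leq1+\|y'\|$ with $\|w\|^p=1-\|y'\|^p$ yields
\[
(1+\|y'\|)^p-\|y'\|^p\geq(2-\delta)^p-1,
\]
and since $a\mapsto(1+a)^p-a^p$ is continuous and, because $p>1$, strictly increasing on $[0,1]$ (its derivative being $p[(1+a)^{p-1}-a^{p-1}]>0$) with range $[1,2^p-1]$, this forces $\|y'\|\geq1-\eta$ for $\delta$ small, where $\eta=\eta(\delta)\to0$ as $\delta\to0$; consequently $\|w\|^p=1-\|y'\|^p\leq1-(1-\eta)^p$. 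I would then put $y:=y'/\|y'\|\in S_X$ and estimate $\|x\pm y\|\geq\|x\pm y'\|-(1-\|y'\|)\geq\|x\pm y'\|-\eta$ together with $\|x\pm y'\|^p\geq(2-\delta)^p-\|w\|^p\geq(2-\delta)^p-1+(1-\eta)^p$, so that
\[
\|x\pm y\|\geq\bigl((2-\delta)^p-1+(1-\eta)^p\bigr)^{1/p}-\eta .
\]
The right-hand side tends to $2$ as $\delta\to0$, so fixing $\delta$ small enough that it exceeds $2-\eps$ completes the proof via Proposition~\ref{prop: reformulations of loc oct}. The case $p=\infty$ is easier and I would treat it separately: from $\max\{\|x\pm y'\|,\|w\|\}\geq2-\delta$ and $\|w\|\leq1$ one gets $\|x\pm y'\|\geq2-\delta$, hence $\|y'\|\geq1-\delta$, and normalizing as before gives $\|x\pm y'/\|y'\|\|\geq2-2\delta$.

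The main obstacle is precisely this first step of {\rm(c)} --- ruling out that the perturbation $(y',w)$ carries a non-negligible part of its mass in the $Y$-coordinate. The hypothesis $p>1$ enters exactly through the strict monotonicity of $a\mapsto(1+a)^p-a^p$; for $p=1$ this function is the constant $1$ and the argument collapses, as it must, since {\rm(c)} fails for $p=1$ (for instance $X=\R$ is not locally octahedral, yet $\R\oplus_1 Y$ is, by {\rm(a)}, whenever $Y$ is). The only point needing a little care is the order in which the two small parameters $\delta$ and $\eta(\delta)$ are chosen; apart from that, the proof uses nothing beyond the triangle inequality and the behaviour of the $\ell_p^2$-norm.
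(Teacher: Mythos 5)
Your proposal is correct and follows essentially the same route as the paper: part (a) by perturbing only the first coordinate with $(y,0)$, part (b) by scaling the two perturbations by $\|x\|$ and $\|v\|$ so that $(\|x\|,\|v\|)\in S_{\ell_p^2}$, and part (c) by testing against $(x,0)$ and using the strict monotonicity of $a\mapsto(1+a)^p-a^p$ (for $p>1$) to force the perturbation's $Y$-component to be negligible before normalizing. The only cosmetic differences are that in (b) you invoke the coordinatewise monotonicity of the $\ell_p$-norm where the paper raises to the $p$-th power, and your remark that (b) also covers $p=1$ at the cost of assuming both summands locally octahedral is a correct aside.
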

\begin{remark}
Note that Proposition \ref{prop: p-sum LOH}, (c), fails if we take $p=1$. Indeed, by part (a) of Proposition \ref{prop: p-sum LOH}, $\ell_1\oplus_1 \R$ is locally octahedral, but $\R$ fails to be locally octahedral.
\end{remark}
%
%\marginpar{\tiny Johann: Kontran\"{a}ide, et (b) osas vastupidine ei kehti: $\ell_1\oplus_1 \R$ on (lokaalselt) oktaeedriline, aga $\R$ ei ole. }

\begin{proof}
(a).
Assume that $X$ is locally octahedral. Fix $(x,y)\in S_{X\oplus_1 Y}$ and $\eps>0$. By our assumption, there exists a $u\in S_X$ such that
\[
\|x\pm u\|\geq(1-\eps)\bigl(\|x\|+1\bigr).
\]
Hence,
\[
\bigl\|(x,y)\pm(u,0)\bigr\|_1\geq(1-\eps)\bigl(\|x\|+1\bigr)+\|y\|\geq 2-2\eps.
\]
Thus $X\oplus_1 Y$ is locally octahedral.

\medskip
(b).
Assume that $X$ and $Y$ are locally octahedral, and let $1<p\leq\infty$.
Let $(x,y)\in S_{X\oplus_p Y}$ and let $0<\eps<1$.
It suffices to find a $(u,v)\in S_{X\oplus_p Y}$ such that
\[
\|(x,y)\pm (u,v)\|_p\geq 2-2\eps.
\]

We may (and do) assume that $x\not=0$ and $y\not=0$.
By our assumption, there exist $\tilde u\in S_X$ and $\tilde v\in S_Y$ such that
\[
\|x+t\tilde u\|\geq(1-\eps)\bigl(\|x\|+|t|\bigr)\qquad\text{for all $t\in\mathbb R$}
\]
and
\[
\|y+t\tilde v\|\geq(1-\eps)\bigl(\|y\|+|t|\bigr)\qquad\text{for all $t\in\mathbb R$}.
\]

If $1<p<\infty$, it follows that
\[
\Big\|x\pm\|x\|\tilde{u}\Big\|^p+\Big\|y\pm\|y\|\tilde{v}\Big\|^p\geq(1-\eps)^p\, 2^p.
\]
This completes the proof for $1<p<\infty$, because one may take $u=\|x\|\tilde u$ and $v=\|y\|\tilde v$.

If $p=\infty$, one may take $u=\tilde u$ and $v=\tilde v$  because
\begin{align*}
\bigl\|(x,y)\pm (\tilde u,\tilde v)\bigr\|_\infty&=\max\bigl\{\|x\pm \tilde u\|,\|y\pm \tilde v\|\bigr\}\\
&\geq (1-\eps)\bigl(\max\{\|x\|,\|y\|\}+1\bigr)\\
&=2-2\eps.
\end{align*}

(c).
Assume that $X\oplus_p Y$ is locally octahedral, where $1<p\leq\infty$.
Let $x\in S_X$ and let $0<\eps<1$. Since $\bigl\|(x,0)\bigl\|_p=1$, whenever $\delta>0$,
there exists a $(u,v)\in S_{X\oplus_p Y}$ such that
\begin{equation}\label{eq: ||(x+-u,v)||_p>=2-delta}
\bigl\|(x\pm u,v)\bigr\|_p\geq2-\delta.
\end{equation}
It suffices to show that (\ref{eq: ||(x+-u,v)||_p>=2-delta}) with $\delta$ small enough implies that
\begin{equation}\label{eq: ||x+-u||>=2-eps}
\|x\pm u\|\geq 2-\eps,
\end{equation}
because, in this case,  $\|u\|\geq 1-\eps$, thus
\[
\biggl\|x\pm \frac{u}{\|u\|}\biggr\|\geq\|x\pm u\|-\bigl(1-\|u\|\bigr)\geq 2-2\eps,
\]
and it follows that $X$ is locally octahedral.

If $p=\infty$, (\ref{eq: ||(x+-u,v)||_p>=2-delta}) means that $\max\bigl\{\|x\pm u\|,\|v\|\bigr\}\geq 2-\delta$.
Since $\|v\|\leq 1$, taking $\delta=\eps$ implies~(\ref{eq: ||x+-u||>=2-eps}).

If $1<p<\infty$, (\ref{eq: ||(x+-u,v)||_p>=2-delta}) means that $\|x\pm u\|^p+\|v\|^p\geq (2-\delta)^p$.
Since $\|u\|^p+\|v\|^p=1$, this implies that
\begin{equation}\label{eq: ||x+-u||>=something}
\|x\pm u\|^p\geq(2-\delta)^p-\bigl(1-\|u\|^p\bigr),
\end{equation}
thus it suffices to show that $\|u\|$ is as close to $1$ as we want whenever $\delta$ is small enough.
The latter is true because, by (\ref{eq: ||x+-u||>=something}),
\[
\left(1+ \|u\|\right)^p-\|u\|^p\geq(2-\delta)^p-1,
\]
and the function $f\colon\,[0,1]\to\mathbb R$, $f(t)=(1+t)^p-t^p$, is strictly increasing with $\lim_{t\rightarrow 1}f(t)=2^p-1$.
\begin{comment}
******

Assume that $1<p<\infty$ and $X\oplus_p Y$ is locally octahedral.
Let $x\in S_X$ and let $0<\eps<1$. Since $\bigl\|(x,0)\bigl\|_p=1$, there exists a $(u,v)\in S_{X\oplus_p Y}$ such that
\[
\|x\pm u\|^p+\|v\|^p\geq 2^p-\eps.
\]
It suffices to show that $\|u\|$ is arbitrarily close to $1$, then the statement follows similarly to (\ref{eq: normeeritud u}). Since $\|u\|^p+\|v\|^p=1$, we have
\[
(1+ \|u\|)^p-\|u\|^p\geq 2^p-1-\eps.
\]
Observe that the function $f\colon \mathbb R\to\mathbb R$, $f(t)=(1+t)^p-t^p$ is strictly increasing on $[0,1]$ and $\lim_{t\rightarrow 1}f(t)=2^p-1$. Thus we can assume that $\|u\|$ is arbitrarily close to $1$.

Assume now that $X\oplus_\infty Y$ is locally octahedral.
Let $x\in S_X$ and let $0<\eps<1$. Since $\bigl\|(x,0)\bigl\|_\infty=1$, there exists a $(u,v)\in S_{X\oplus_\infty Y}$ such that
\[
\max\bigl\{\|x\pm u\|,\|v\|\bigr\}\geq 2-\eps.
\]
Since $\|v\|\leq 1$, one has
\[
\|x\pm u\|\geq 2-\eps.
\]
It follows that $\|u\|\geq 1-\eps$ and
\begin{equation}\label{eq: normeeritud u}
\biggl\|x\pm \frac{u}{\|u\|}\biggr\|\geq\|x\pm u\|-\bigl(1-\|u\|\bigr)\geq 2-2\eps.
\end{equation}

Thus $X$ is locally octahedral.
%
\end{comment}
\end{proof}

Proposition~\ref{prop: p-sum LOH} combined, respectively, with Theorems~\ref{thm: omnibus thm for the ld2P} and \ref{thm: omnibus thm for the weak* ld2P} immediately gives the corresponding stability results for the local diameter $2$ property and for the weak$^\ast$ local diameter $2$ property. These results for the local diameter $2$ property are known, but our method provides an alternative approach.

\begin{corollary}
Let $X$ and $Y$ be Banach spaces.
\begin{itemize}
\item[{\rm(a)}]
If $X$ has the local diameter~$2$ property, then $X\oplus_\infty Y$ has the local diameter $2$ property \emph{(cf. \cite[Theorem~3.2]{ALN}, see also \cite[Proposition~2.4]{ABL})}.
\item[{\rm(b)}]
If $X$ and $Y$ have the local diameter $2$ property, and $1\leq p<\infty$, then $X\oplus_p Y$ has the local diameter~$2$ property  \emph{(see \cite[Theorem~3.2]{ALN}, see also \cite[Proposition~2.4]{ABL})}.
\item[{\rm(c)}]
If $X\oplus_p Y$ has the local diameter $2$ property, where $1\leq p<\infty$, then $X$ has the local diameter $2$ property \emph{(see \cite[Proposition~2.5]{ABL})}.
\end{itemize}
\end{corollary}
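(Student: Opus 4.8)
The plan is to transfer each of the three assertions to the dual side by means of Theorem~\ref{thm: omnibus thm for the ld2P}, which asserts that a Banach space $Z$ has the local diameter $2$ property if and only if $Z^\ast$ is locally octahedral, and then to apply Proposition~\ref{prop: p-sum LOH}. The only additional ingredient is the standard isometric identification of the dual of an $\ell_p$-sum of two spaces, namely $(X\oplus_p Y)^\ast=X^\ast\oplus_q Y^\ast$, where $1\le p\le\infty$ and $q$ is the conjugate exponent, with the usual conventions that $q=\infty$ when $p=1$ and $q=1$ when $p=\infty$.

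For (a) we take $p=\infty$, so $q=1$: then $X\oplus_\infty Y$ has the local diameter $2$ property precisely when $(X\oplus_\infty Y)^\ast=X^\ast\oplus_1 Y^\ast$ is locally octahedral. Since $X$ has the local diameter $2$ property, $X^\ast$ is locally octahedral by Theorem~\ref{thm: omnibus thm for the ld2P}, and Proposition~\ref{prop: p-sum LOH}~(a) then gives that $X^\ast\oplus_1 Y^\ast$ is locally octahedral, as needed.

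For (b), if $p=1$ then $q=\infty$, and if $1<p<\infty$ then $1<q<\infty$; in both cases the hypothesis that $X$ and $Y$ have the local diameter $2$ property makes $X^\ast$ and $Y^\ast$ locally octahedral, so Proposition~\ref{prop: p-sum LOH}~(b) --- which applies for all $1<q\le\infty$ --- shows that $X^\ast\oplus_q Y^\ast=(X\oplus_p Y)^\ast$ is locally octahedral, i.e.\ $X\oplus_p Y$ has the local diameter $2$ property. For (c), the range $1\le p<\infty$ forces $1<q\le\infty$; if $X\oplus_p Y$ has the local diameter $2$ property, then $X^\ast\oplus_q Y^\ast$ is locally octahedral, and Proposition~\ref{prop: p-sum LOH}~(c) yields that $X^\ast$ is locally octahedral, whence $X$ has the local diameter $2$ property.

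There is no real obstacle here: all the substance resides in Proposition~\ref{prop: p-sum LOH}, and the only point demanding any attention is keeping track of the conjugate exponent and checking that the endpoint values $p=1$ and $p=\infty$ fall within the ranges for which parts (a)--(c) of Proposition~\ref{prop: p-sum LOH} are stated, which they do. Running the same bookkeeping through Theorem~\ref{thm: omnibus thm for the weak* ld2P} in place of Theorem~\ref{thm: omnibus thm for the ld2P} would likewise produce the analogous stability statements for the weak$^\ast$ local diameter $2$ property.
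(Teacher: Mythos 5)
Your proposal is correct and is exactly the argument the paper intends: the corollary is stated as an immediate consequence of Theorem~\ref{thm: omnibus thm for the ld2P} and Proposition~\ref{prop: p-sum LOH} via the identification $(X\oplus_p Y)^\ast=X^\ast\oplus_q Y^\ast$. Your bookkeeping of the conjugate exponents at the endpoints matches how parts (a)--(c) of the proposition are meant to be applied.
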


\begin{corollary}
Let $X$ and $Y$ be Banach spaces.
\begin{itemize}
\item[{\rm(a)}]
If $X^\ast$ has the weak$^\ast$ local diameter $2$ property, then $(X\oplus_1 Y)^\ast$ has the weak$^\ast$ local diameter $2$ property.
\item[{\rm(b)}]
If $X^\ast$ and $Y^\ast$ have the weak$^\ast$ local diameter $2$ property, and $1< p\leq\infty$,
then $(X\oplus_p Y)^\ast$ has the weak$^\ast$ local diameter $2$ property.
\item[{\rm(c)}]
If $(X\oplus_p Y)^\ast$ has the weak$^\ast$ local diameter $2$ property, where $1< p\leq\infty$, then $\Xs$ has the weak$^\ast$ local diameter $2$ property.
\end{itemize}
\end{corollary}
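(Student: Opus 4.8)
The plan is to deduce all three parts directly from Proposition~\ref{prop: p-sum LOH} by passing through the characterization in Theorem~\ref{thm: omnibus thm for the weak* ld2P}, which asserts that a Banach space $Z$ is locally octahedral if and only if $Z^\ast$ has the weak$^\ast$ local diameter $2$ property. The crucial point to keep in mind is that this equivalence is a biconditional valid for \emph{every} Banach space, so it may be applied both to the summands $X$, $Y$ and to the sum $X\oplus_p Y$ itself.

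For part (a), I would argue as follows. By Theorem~\ref{thm: omnibus thm for the weak* ld2P}, the hypothesis that $X^\ast$ has the weak$^\ast$ local diameter $2$ property means precisely that $X$ is locally octahedral. Proposition~\ref{prop: p-sum LOH}(a) then yields that $X\oplus_1 Y$ is locally octahedral, and a second application of Theorem~\ref{thm: omnibus thm for the weak* ld2P} (now to the space $Z=X\oplus_1 Y$) gives that $(X\oplus_1 Y)^\ast$ has the weak$^\ast$ local diameter $2$ property, as required.

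Parts (b) and (c) follow the same template. For (b), the hypotheses say $X$ and $Y$ are locally octahedral; Proposition~\ref{prop: p-sum LOH}(b) gives that $X\oplus_p Y$ is locally octahedral for $1<p\leq\infty$, and Theorem~\ref{thm: omnibus thm for the weak* ld2P} translates this back to the weak$^\ast$ local diameter $2$ property of $(X\oplus_p Y)^\ast$. For (c), the hypothesis that $(X\oplus_p Y)^\ast$ has the weak$^\ast$ local diameter $2$ property means $X\oplus_p Y$ is locally octahedral, whence $X$ is locally octahedral by Proposition~\ref{prop: p-sum LOH}(c), and a final appeal to Theorem~\ref{thm: omnibus thm for the weak* ld2P} shows that $\Xs$ has the weak$^\ast$ local diameter $2$ property.

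Since the corollary is obtained purely by composing two already-established results, there is no genuine obstacle; the only things that need care are bookkeeping the role of $p$ (parts (b) and (c) must exclude $p=1$ exactly because Proposition~\ref{prop: p-sum LOH}(b),(c) do, as shown by the remark following that proposition) and noticing that no appeal to the duality $(X\oplus_p Y)^\ast\cong \Xs\oplus_q \Ys$ is needed---the characterization of Theorem~\ref{thm: omnibus thm for the weak* ld2P} is applied directly to the sum.
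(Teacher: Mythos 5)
Your argument is correct and is exactly the paper's intended derivation: the corollary is stated as an immediate consequence of Proposition~\ref{prop: p-sum LOH} combined with the equivalence (i)$\Leftrightarrow$(ii) of Theorem~\ref{thm: omnibus thm for the weak* ld2P}, applied once to the summands and once to the sum, precisely as you do. Your remarks on why $p=1$ must be excluded in parts (b) and (c) and on not needing the duality $(X\oplus_p Y)^\ast\cong X^\ast\oplus_q Y^\ast$ match the paper's setup as well.
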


The following proposition is our main stability result for weakly octahedral spaces.

%Next we see that, by forming $\ell_p$-sums of weakly octahedral spaces, they satisfy the same properties as locally octahedral spaces.
%It is also more or less straightforward that $p$-sums of weakly octahedral Banach spaces are weakly octahedral.Even more is true for $p=1$.
%
\begin{proposition}\label{prop: p-sum WOH}
Let $X$ and $Y$ be Banach spaces.
\begin{itemize}
\item[{\rm(a)}]
If $X$ is weakly octahedral, then $X\oplus_1 Y$ is weakly octahedral.
\item[{\rm(b)}]
If $X$ and $Y$ are weakly octahedral, and $1<p\leq\infty$, then $X\oplus_p Y$ is weakly octahedral.
\item[{\rm(c)}]
If $X\oplus_p Y$ is weakly octahedral, where $1<p\leq\infty$, then $X$ is weakly octahedral.
%\item[(c)] If $X\oplus_\infty Y$ is weakly octahedral and $X$ is nontrivial, then $X$ is weakly octahedral.
%
\end{itemize}
\end{proposition}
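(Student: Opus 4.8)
The plan is to mimic closely the structure of the proof of Proposition~\ref{prop: p-sum LOH}, replacing the single vector $x$ by a finite-dimensional subspace $E$ and keeping track of a functional $\xs$. I would use the reformulation in Proposition~\ref{prop: reformulations of weakly oct}, in particular the criterion (ii') in terms of finitely many unit vectors $x_1,\dots,x_n$, together with a $\gamma$-net argument when passing from a net of directions back to a subspace. Throughout, for $(x,y)\in X\oplus_p Y$ I write the dual pairing against $(\xs,\ys)\in X^\ast\oplus_q Y^\ast$ ($1/p+1/q=1$) as $\xs(x)+\ys(y)$, and I recall that $\|(\xs,\ys)\|_q\le 1$ implies $\|\xs\|\le1$ and $\|\ys\|\le1$.

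\emph{Part (a).} Assume $X$ is weakly octahedral. Fix finitely many $(x_i,y_i)\in S_{X\oplus_1 Y}$, a functional $(\xs,\ys)\in B_{(X\oplus_1 Y)^\ast}$ (so $\xs\in B_{X^\ast}$, $\ys\in B_{Y^\ast}$), and $\eps>0$. The subtlety compared with the locally octahedral case is that $\|x_i\|$ may be small or zero, in which case $|\xs(x_i)|$ is also small; I would split into the cases $\|x_i\|\ge\eps$ and $\|x_i\|<\eps$, exactly as in the proof of (i)$\Rightarrow$(ii) of Theorem~\ref{thm: omnibus thm for the weak* d2P}. Apply weak octahedrality of $X$ to the (renormalised) vectors $x_i/\|x_i\|$ with $\|x_i\|\ge\eps$ and to $\xs$, obtaining $u\in S_X$ with $\|x_i+tu\|\ge(1-\eps)(|\xs(x_i)|+t\|x_i\|)$ for all $t>0$, for those $i$. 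Then $(u,0)\in S_{X\oplus_1 Y}$ and, using the additivity of the $\ell_1$-norm,
\[
\bigl\|(x_i,y_i)+t(u,0)\bigr\|_1=\|x_i+tu\|+\|y_i\|\ge(1-\eps)\bigl(|\xs(x_i)|+t\bigr)+\text{(small error)},
\]
where for the $i$ with $\|x_i\|<\eps$ one uses $\|y_i\|\ge1-\eps$ directly. Adjusting $\eps$ this gives (ii') of Proposition~\ref{prop: reformulations of weakly oct} for $X\oplus_1 Y$.

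\emph{Part (b).} Assume $X,Y$ are weakly octahedral and $1<p\le\infty$. Given $(x,y)\in S_{X\oplus_p Y}$ and $(\xs,\ys)\in B_{X^\ast\oplus_q Y^\ast}$ and $\eps>0$, apply weak octahedrality of $X$ to $x$ (if $x\ne0$) and $\xs$, and of $Y$ to $y$ (if $y\ne0$) and $\ys$, getting $\tilde u\in S_X$, $\tilde v\in S_Y$ with $\|x+t\tilde u\|\ge(1-\eps)(|\xs(x)|+|t|)$ and $\|y+t\tilde v\|\ge(1-\eps)(|\ys(y)|+|t|)$ for all $t\in\R$. For $p=\infty$ take $(u,v)=(\tilde u,\tilde v)\in S_{X\oplus_\infty Y}$; then $\|(x,y)+t(u,v)\|_\infty=\max\{\|x+t\tilde u\|,\|y+t\tilde v\|\}\ge(1-\eps)(\max\{|\xs(x)|,|\ys(y)|\}+|t|)$, and one notes $|\xs(x)+\ys(y)|\le |\xs(x)|+|\ys(y)|$ versus what is needed — here I would instead phrase the target of weak octahedrality against the functional $(\xs,0)$ or $(0,\ys)$, choosing whichever of $\|x\|,\|y\|$ equals $1$, so that the bound matches; this is the same device as in Proposition~\ref{prop: p-sum LOH}(b). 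For $1<p<\infty$ take $u=\|x\|\tilde u$, $v=\|y\|\tilde v$, so $(u,v)\in S_{X\oplus_p Y}$, and combine the two estimates via
\[
\|x\pm\|x\|\tilde u\|^p+\|y\pm\|y\|\tilde v\|^p\ge(1-\eps)^p\bigl(\|x\|(|\xs(x)|/\|x\|+1)\bigr)^p+\dots,
\]
again reducing to the functional supported on the coordinate carrying the unit norm; I expect the cleanest route is to prove the (equivalent, by Proposition~\ref{prop: reformulations of weakly oct for X*} type reasoning) statement where $\xs$ is replaced by a functional of the form $(\xs,0)$ with $\|x\|=1$, reducing everything to part~(a)-style bookkeeping.

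\emph{Part (c).} Assume $X\oplus_p Y$ is weakly octahedral, $1<p\le\infty$. Fix a finite-dimensional $E\subset X$, $\xs\in B_{X^\ast}$, $\eps>0$. Apply weak octahedrality of $X\oplus_p Y$ to the finite-dimensional subspace $E\times\{0\}$ and the functional $(\xs,0)\in B_{X^\ast\oplus_q Y^\ast}$: for every $\delta>0$ there is $(u,v)\in S_{X\oplus_p Y}$ with $\|(x,0)+t(u,v)\|_p\ge(1-\delta)(|\xs(x)|+|t|)$ for all $x\in E$, $t\in\R$. As in Proposition~\ref{prop: p-sum LOH}(c), the point is that $(1-\delta)$-near-maximality of $\|(x,0)+t(u,v)\|_p$ forces $\|v\|$ to be small (for $p=\infty$ because $\|v\|\le1$ and one reads off $\|x+tu\|\ge\text{large}$; for $1<p<\infty$ because $\|x+tu\|^p+\|tv\|^p$ has to be large while $\|u\|^p+\|v\|^p=1$, and the strictly increasing function $f(s)=(1+s)^p-s^p$ argument from Proposition~\ref{prop: p-sum LOH}(c) gives $\|u\|\to1$, hence $\|v\|\to0$). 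Once $\|u\|\ge1-\eps$ and $\|v\|$ is small, $u/\|u\|\in S_X$ witnesses weak octahedrality of $X$ for $E$ and $\xs$ after absorbing the errors into $\eps$.

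\emph{Main obstacle.} The genuine difficulty, as opposed to (a), is matching the right-hand side $(1-\eps)(|\xs(x)|+|t|)$ in parts (b) and (c): weak octahedrality of the sum produces estimates against functionals on the product, and one must show these descend to estimates against $\xs$ alone on a single summand. I expect the resolution is to work systematically with functionals of the form $(\xs,0)$ supported on the unit-norm coordinate and to combine the two summand-estimates using the elementary inequality relating $\|(a,b)\|_p$ to $\|a\|$ and $\|b\|$ — together with the convexity/monotonicity fact about $f(s)=(1+s)^p-s^p$ already isolated in the proof of Proposition~\ref{prop: p-sum LOH}(c) for the quantitative step in part (c). A secondary, purely technical, point is the passage from a $\gamma$-net of directions in $S_E$ back to all of $E$, which is handled exactly as in the proof of (iii)$\Rightarrow$(ii) of Proposition~\ref{prop: reformulations of weakly oct}.
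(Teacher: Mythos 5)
Your part (a) is essentially right, though more roundabout than necessary: since weak octahedrality is phrased for a whole finite-dimensional subspace, you can apply it directly to $E=\spann\{x_1,\dotsc,x_n\}$ and the first component $\xs$ of the given functional (no normalisation, no case split on $\|x_i\|$), and then
$\|(x,y)+t(u,0)\|_1=\|x+tu\|+\|y\|\geq(1-\eps)\bigl(|\xs(x)|+|t|\bigr)+\|y\|\geq(1-\eps)\bigl(|\xs(x)+\ys(y)|+|t|\bigr)$,
using only $\|y\|\geq|\ys(y)|$. Part (c) has the right mechanism but omits a step you genuinely need: ``near-maximality forces $\|v\|$ small'' is only forced at points $x\in E$ on which $\xs$ nearly attains its norm, so you must first reduce to $\|\xs\|=1$ and enlarge $E$ so that $\|\xs|_E\|\geq1-\delta$; if, say, $\xs|_E=0$, the lower bound $(1-\delta)\bigl(|\xs(x)|+|t|\bigr)$ puts no constraint whatever on $\|v\|$, and $u/\|u\|$ need not be a witness. (The paper's own route for (c) goes through the extension characterisation in Theorem~\ref{thm: omnibus thm for the weak* d2P} and performs exactly this enlargement.)

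The genuine gap is in part (b). A witness for weak octahedrality of $X\oplus_p Y$ must be a \emph{single} $(u,v)\in S_{X\oplus_p Y}$ working simultaneously for every point of the given finite-dimensional subspace and against the \emph{given} functional $(\xs,\ys)$. Your choice $u=\|x\|\tilde u$, $v=\|y\|\tilde v$ depends on the point $(x,y)$, so it is not a witness; and your proposed repair --- testing against $(\xs,0)$ or $(0,\ys)$ ``supported on the unit-norm coordinate'' --- replaces the functional, which is given rather than chosen, and the ``unit-norm coordinate'' again varies with the point. The missing idea is to weight the perturbation by the \emph{dual} norms, which are fixed data: apply weak octahedrality of $X$ to $E$ and $\xs/\|\xs\|$, of $Y$ to $F$ and $\ys/\|\ys\|$, and set $u=\|\xs\|^{q-1}\tilde u$, $v=\|\ys\|^{q-1}\tilde v$, so that $\|u\|^p+\|v\|^p=\|\xs\|^q+\|\ys\|^q=1$. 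Then
\begin{align*}
\|x+tu\|^p+\|y+tv\|^p
&\geq(1-\eps)^p\biggl[\|\xs\|^q\Bigl(\frac{|\xs(x)|}{\|\xs\|^q}+|t|\Bigr)^p+\|\ys\|^q\Bigl(\frac{|\ys(y)|}{\|\ys\|^q}+|t|\Bigr)^p\biggr]\\
&\geq(1-\eps)^p\bigl(|\xs(x)|+|\ys(y)|+|t|\bigr)^p
\end{align*}
by convexity of $s\mapsto(s+|t|)^p$ with weights $\|\xs\|^q,\|\ys\|^q$; for $p=\infty$ one uses $\max\{a,b\}\geq\|\xs\|a+\|\ys\|b$, since then $q=1$ and $\|\xs\|+\|\ys\|=1$. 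This also repairs your $p=\infty$ case, where $\max\{|\xs(x)|,|\ys(y)|\}$ falls short of $|\xs(x)+\ys(y)|$. Without this (or an equivalent) device your plan for (b) does not go through.
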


\begin{comment}%
\marginpar{\tiny Johann: V\"{a}ide (c) kehtib ka. Vt. ABG, Stability results, Prop 2.5}
\begin{remark}
We do not know whether the analogue of Proposition~\ref{prop: p-sum LOH},~(c), holds for weak octahedrality.
\end{remark}
%
\end{comment}

\begin{proof}
(a).
Assume that $X$ is weakly octahedral.
Let $E$ and $F$ be finite-dimensional subspaces of $X$ and $Y$, respectively,
let $(x^\ast,y^\ast)\in B_{X^\ast\oplus_\infty Y^\ast}$, and let $\eps>0$.
It suffices to show that there exists a $(u,v)\in S_{X\oplus_1 Y}$ such that,  for all $x\in E$, all $y\in F$, and all $t\in\mathbb R$,
one has
\[
\bigl\|(x,y)+t(u,v)\bigr\|_1
%\|x+tu\|+\|y+tv\|
\geq (1-\eps)\bigl(|x^\ast(x)+y^\ast(y)|+|t|\bigr).
\]
By our assumption, there exists a $u\in S_X$ such that
\[
\|x+tu\|\geq (1-\eps)\bigl(|x^\ast(x)|+|t|\bigr)
\quad\text{for all $x\in E$ and all $t\in\mathbb R$,}
\]
One has, for all $x\in E$, all $y\in F$, and all $t\in\mathbb R$,
\begin{align*}
\bigl\|(x,y)+t(u,0)\bigr\|
&=\|x+tu\|+\|y\|\\
&\geq (1-\eps)\bigl(|x^\ast(x)|+|t|\bigr)+\|y\|\\
&\geq (1-\eps)\bigl(|x^\ast(x)+\ys(y)|+|t|\bigr).
\end{align*}
Thus $X\oplus_1 Y$ is weakly octahedral.

\medskip
(b).
Assume that $X$ and $Y$ are weakly octahedral, and let $1<p\leq \infty$.
Let $E$ and $F$ be finite-dimensional subspaces of $X$ and $Y$, respectively,
let $(x^\ast,y^\ast)\in S_{X^\ast\oplus_q Y^\ast}$, where $q$ is the conjugate exponent of $p$
(i.e., $1/p+1/q=1$ if $1<p<\infty$, and $q=1$ if $p=\infty$), and let $0<\eps<1$.
It suffices to find a $(u,v)\in S_{X\oplus_p Y}$ such that for all $x\in E$, all $y\in F$, and all $t\in\mathbb R$, one has
\[
\bigl\|(x,y)+t\,(u,v)\bigr\|_p\geq (1-\eps)\bigl(|x^\ast(x)+y^\ast(y)|+|t|\bigr).
\]
We may (and do) assume that $x^\ast\not=0$ and $y^\ast\not=0$.

By our assumption, there exist $\tilde u\in S_X$ and $\tilde v\in S_Y$ such that
%for all $x\in E$, all $y\in F$, and all $t\in\mathbb R$, one has
\[
\|x+t\tilde u\|\geq (1-\eps)\biggl(\frac{|x^\ast(x)|}{\|x^\ast\|}+|t|\biggr)
\quad\text{for all $x\in E$ and all $t\in\mathbb R$,}
\]
and
\[
\|y+t\tilde v\|\geq (1-\eps)\biggl(\frac{|y^\ast(y)|}{\|y^\ast\|}+|t|\biggr)
\quad\text{for all $y\in F$ and all $t\in\mathbb R$.}
\]

If $1<p<\infty$, take $u=\|x^\ast\|^{q-1}\tilde u$ and $v=\|y^\ast\|^{q-1}\tilde v$,
and observe that, for all $x\in E$, all $y\in F$, and all $t\in\mathbb R$,
\begin{align*}
\frac1{(1-\eps)^p}\big(\|x&+tu\|^p+\|y+tv\|^p\big)\\
&\geq\biggl(\frac{|x^\ast(x)|}{\|x^\ast\|}+\|x^\ast\|^{q-1}|t|\biggr)^p+\biggl(\frac{|y^\ast(y)|}{\|y^\ast\|}+\|y^\ast\|^{q-1}|t|\biggr)^p\\
&=\|x^\ast\|^q\biggl(\frac{|x^\ast(x)|}{\|x^\ast\|^q}+|t|\biggr)^p+\|y^\ast\|^q\biggl(\frac{|y^\ast(y)|}{\|y^\ast\|^q}+|t|\biggr)^p\\
&\geq\biggl(\|x^\ast\|^q\frac{|x^\ast(x)|}{\|x^\ast\|^q}+\|y^\ast\|^q\frac{|y^\ast(y)|}{\|y^\ast\|^q}+|t|\biggr)^p\\
&=\big(|x^\ast(x)|+|y^\ast(y)|+|t|\big)^p.
\end{align*}
The last inequality holds because the function $[0,\infty)\to\mathbb R$, $s\mapsto(s+|t|)^p$ is convex for any fixed $t\in\mathbb R$.

If $p=\infty$, take $u=\tilde u$ and $v=\tilde v$, and observe that, for all $x\in E$, all $y\in F$, and all $t\in\mathbb R$,
\begin{align*}
\frac1{(1-\eps)}\max\bigl\{\|x&+tu\|,\|y+tv\|\bigr\}\\
&\geq\frac1{(1-\eps)}\Bigl(\|x^\ast\|\,\|x+tu\|+\|y^\ast\|\,\|y+tv\|\Bigr)\\
&\geq\|x^\ast\|\biggl(\frac{|x^\ast(x)|}{\|x^\ast\|}+|t|\biggr)+\|y^\ast\|\biggl(\frac{|y^\ast(y)|}{\|y^\ast\|}+|t|\biggr)\\
&=|x^\ast(x)|+|y^\ast(y)|+|t|.
\end{align*}

(c).
Assume that $X\oplus_p Y$ is weakly octahedral, where $1<p\leq\infty$.
Let $E$ be a finite-dimensional subspace of $X$, let $\xs\in S_{\Xs}$, and let $0<\eps<1$.
Choose $\delta>0$ to satisfy $(1+\delta)^q-(1-\delta)^q<\eps^q$, where $q$ is the conjugate exponent of $p$.
By enlarging $E$ if necessary, we may assume that $\|\xs|_E\|\geq1-\delta$
(notice that $X$ must be infinite-dimensional by Proposition \ref{prop: p-sum LOH}, (c), and Remark~\ref{rem: loc-oct is inf-dimensional}).
By (the equivalence (ii)$\Leftrightarrow$(iii) of) Theorem \ref{thm: omnibus thm for the weak* d2P},
there are $z\in X$, $y\in Y$, with $\|(y,z)\|_p=1$, and $\zs_i\in\Xs$, $\ys_i\in\Ys$, with $\|(\zs_i,\ys_i)\|_q\leq1+\delta$, $i=1,2$,
satisfying
\[
\zs_i|_E=\xs|_E
\quad\text{and}\quad
\zs_i(z)+\ys_i(y)=(-1)^i,
\qquad
i=1,2.
\]
Since
\[
\|\ys_i\|^q\leq(1+\delta)^q-\|\zs_i\|^q\leq(1+\delta)^q-(1-\delta)^q<\eps^q,\quad i=1,2,
\]
one has $|\ys_i(y)|<\eps$, $i=1,2$, and thus $\zs_2(z)>1-\eps$ and $\zs_1(z)<-1+\eps$.
Now let $x\in E$ be arbitrary. Choosing $i\in\{1,2\}$ so that $\xs(x)$ and $\zs_i(z)$ have the same sign,
one has
\begin{align*}
\left\|x+\frac{z}{\|z\|}\right\|
&\geq\frac1{1+\eps}\left|\zs_i(x)+\frac{\zs_i(z)}{\|z\|}\right|
=\frac1{1+\eps}\left(|\xs(x)|+\frac{|\zs_i(z)|}{\|z\|}\right)\\
&\geq\frac1{1+\eps}\bigl(|\xs(x)|+1-\eps\bigr)
\geq\frac{1-\eps}{1+\eps}\bigl(|\xs(x)|+1\bigr),
\end{align*}
and it follows that $X$ is weakly octahedral.

\end{proof}
Proposition~\ref{prop: p-sum WOH} combined, respectively, with Theorems~\ref{thm: omnibus thm for the d2P} and \ref{thm: omnibus thm for the weak* d2P} immediately gives the corresponding stability results for the diameter $2$ property and for the weak$^\ast$ diameter $2$ property.
These results for the diameter $2$ property are known, but our method provides an alternative approach.

\begin{corollary}
Let $X$ and $Y$ be Banach spaces.
\begin{itemize}
\item[{\rm(a)}]
If $X$ has the diameter $2$ property, then $X\oplus_\infty Y$ has the diameter~$2$ property \emph{(see \cite[Lemma~2.1]{P}, see also \cite[Theorem~2.7,~(ii), and Theorem~3.2]{ALN}, see also \cite[Theorem~2.4]{ABL})}.
\item[{\rm(b)}]
If $X$ and $Y$ have the diameter $2$ property, and $1\leq p<\infty$, then $X\oplus_p Y$ has the diameter $2$ property \emph{(see \cite[Theorem~3.2]{ALN}, see also \cite[Theorem~2.4]{ABL})}.
\item[{\rm(c)}]
If $X\oplus_p Y$ has the diameter $2$ property, and $1\leq p<\infty$, then $X$ has the diameter $2$ property \emph{(see \cite[Proposition~2.5]{ABL})}.
\end{itemize}
\end{corollary}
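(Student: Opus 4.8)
The plan is to obtain this corollary by dualizing Proposition~\ref{prop: p-sum WOH}. The two ingredients are Theorem~\ref{thm: omnibus thm for the d2P}, according to which a Banach space $Z$ has the diameter $2$ property if and only if $Z^\ast$ is weakly octahedral, together with the standard isometric identification $(X\oplus_p Y)^\ast=X^\ast\oplus_q Y^\ast$, where $q$ denotes the conjugate exponent of $p$ (so $q=\infty$ for $p=1$, $q=1$ for $p=\infty$, and $1/p+1/q=1$ otherwise). Note that every space occurring below is automatically infinite-dimensional, since by Remark~\ref{rem: loc-oct is inf-dimensional} weakly octahedral spaces are, so no triviality issue arises.

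For part~(a): if $X$ has the diameter $2$ property, then $X^\ast$ is weakly octahedral by Theorem~\ref{thm: omnibus thm for the d2P}, hence $X^\ast\oplus_1 Y^\ast$ is weakly octahedral by Proposition~\ref{prop: p-sum WOH}\,(a); since $X^\ast\oplus_1 Y^\ast=(X\oplus_\infty Y)^\ast$, a second application of Theorem~\ref{thm: omnibus thm for the d2P} shows that $X\oplus_\infty Y$ has the diameter $2$ property. For part~(b) one argues similarly, splitting into the cases $p=1$ and $1<p<\infty$: if $X$ and $Y$ have the diameter $2$ property, then $X^\ast$ and $Y^\ast$ are weakly octahedral, so $(X\oplus_p Y)^\ast=X^\ast\oplus_q Y^\ast$ with $q\in(1,\infty]$ is weakly octahedral by Proposition~\ref{prop: p-sum WOH}\,(b) (applied to the exponent $q$), whence $X\oplus_p Y$ has the diameter $2$ property. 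Part~(c) is the same chain read backwards: if $X\oplus_p Y$ has the diameter $2$ property then $X^\ast\oplus_q Y^\ast=(X\oplus_p Y)^\ast$ is weakly octahedral with $q\in(1,\infty]$, so $X^\ast$ is weakly octahedral by Proposition~\ref{prop: p-sum WOH}\,(c), and therefore $X$ has the diameter $2$ property.

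There is no real obstacle here: the argument is a mechanical transfer through duality. The only thing demanding a moment's attention is the bookkeeping of exponents — one must observe that as $p$ ranges over $\{\infty\}$ in part~(a) and over $[1,\infty)$ in parts~(b) and~(c), the conjugate exponent $q$ ranges exactly over the values ($q=1$, respectively $q\in(1,\infty]$) for which the matching part of Proposition~\ref{prop: p-sum WOH} is available. This is, of course, precisely why that proposition was formulated with those particular ranges.
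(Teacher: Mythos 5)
Your argument is exactly the paper's: the corollary is stated as an immediate consequence of Proposition~\ref{prop: p-sum WOH} combined with Theorem~\ref{thm: omnibus thm for the d2P} via the identification $(X\oplus_p Y)^\ast=X^\ast\oplus_q Y^\ast$, with the same matching of exponent ranges you describe. The proposal is correct and requires no changes.
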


\begin{corollary}
Let $X$ and $Y$ be Banach spaces.
\begin{itemize}
\item[{\rm(a)}]
If $X^\ast$ has the weak$^\ast$ diameter $2$ property, then $(X\oplus_1 Y)^\ast$ has the weak$^\ast$ diameter $2$ property.
\item[{\rm(b)}] If $X^\ast$ and $Y^\ast$ have the weak$^\ast$ diameter $2$ property, and  $1< p\leq\infty$,
then $(X\oplus_p Y)^\ast$ has the weak$^\ast$ diameter $2$ property.
\item[{\rm(c)}]
If $(X\oplus_p Y)^\ast$ has the weak$^\ast$ diameter $2$ property, and $1< p\leq\infty$, then $\Xs$ has the weak$^\ast$ diameter $2$ property.
\end{itemize}
\end{corollary}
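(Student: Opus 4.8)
The plan is to obtain the corollary as an immediate formal consequence of Proposition~\ref{prop: p-sum WOH}, transported across the duality recorded in Theorem~\ref{thm: omnibus thm for the weak* d2P}. Recall that theorem says: for any Banach space $Z$, the dual $Z^\ast$ has the weak$^\ast$ diameter $2$ property if and only if $Z$ is weakly octahedral. So the whole corollary is just this equivalence, applied both to the summands $X$, $Y$ and to the sum $X\oplus_p Y$, combined with the stability statement for weak octahedrality.

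First I would note the standard isometric identification $(X\oplus_p Y)^\ast = X^\ast\oplus_q Y^\ast$, where $q$ is the conjugate exponent of $p$ (in particular $(X\oplus_1 Y)^\ast = X^\ast\oplus_\infty Y^\ast$); this is what lets the conclusions be phrased purely in terms of the spaces $X$ and $Y$. Under this identification, Theorem~\ref{thm: omnibus thm for the weak* d2P} applied to $Z=X$ turns the hypothesis ``$X^\ast$ has the weak$^\ast$ diameter $2$ property'' into ``$X$ is weakly octahedral'' (and similarly for $Y$), while the same theorem applied to $Z=X\oplus_p Y$ turns the target conclusion ``$(X\oplus_p Y)^\ast$ has the weak$^\ast$ diameter $2$ property'' into ``$X\oplus_p Y$ is weakly octahedral''.

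With this dictionary in place each part is one line. For (a): ``$X^\ast$ weak$^\ast$-d2p'' $\Rightarrow$ ``$X$ weakly octahedral'' $\Rightarrow$ (Proposition~\ref{prop: p-sum WOH}(a)) ``$X\oplus_1 Y$ weakly octahedral'' $\Rightarrow$ ``$(X\oplus_1 Y)^\ast$ weak$^\ast$-d2p''. Parts (b) and (c) are verbatim the same chain, invoking parts (b) and (c) of Proposition~\ref{prop: p-sum WOH} respectively; the range $1<p\leq\infty$ appearing in the corollary is exactly the range in which those parts of the proposition are available, so nothing extra is needed.

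I do not expect any genuine obstacle: the content is entirely in Proposition~\ref{prop: p-sum WOH} and in the already-established duality, and the argument amounts to the remark ``combine Theorem~\ref{thm: omnibus thm for the weak* d2P} with Proposition~\ref{prop: p-sum WOH}''. The only point deserving a word of care is the isometric description of the dual of a $p$-sum, which is classical, and the bookkeeping that the duality theorem is being used both for the pieces and for the whole sum.
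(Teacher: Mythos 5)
Your proof is correct and is exactly the paper's argument: the authors state this corollary as an immediate consequence of combining Theorem~\ref{thm: omnibus thm for the weak* d2P} (applied to $X$, $Y$, and $X\oplus_p Y$) with the corresponding parts of Proposition~\ref{prop: p-sum WOH}. The only cosmetic difference is that your appeal to the identification $(X\oplus_p Y)^\ast=X^\ast\oplus_q Y^\ast$ is not actually needed, since the theorem applied directly to $Z=X\oplus_p Y$ already converts the conclusion into weak octahedrality of the sum.
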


The following proposition is our main stability result for octahedral spaces.
It turns out that octahedral spaces are stable under $\ell_p$-sums only if $p=1$ or $p=\infty$.

\begin{proposition}\label{prop: p-sum OH}
Let $X$ and $Y$ be Banach spaces. \begin{itemize}
\item[{\rm(a)}] If $X$ is octahedral, then $X\oplus_1 Y$ is octahedral.
\item[{\rm(b)}] If $1<p<\infty$, then $X\oplus_p Y$ is not octahedral.
\item[{\rm(c)}] If $X$ and $Y$ are octahedral, then $X\oplus_\infty Y$ is octahedral.
\item[{\rm(d)}] If $X\oplus_\infty Y$ is octahedral, then $X$ is octahedral.
\end{itemize}
\end{proposition}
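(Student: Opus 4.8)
The plan is to handle the four parts separately. Parts (a), (c), and (d) amount to feeding the octahedral direction of a summand into the appropriate coordinate of the sum, while part (b) is the substantive one.

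For (a): given a finite-dimensional subspace $F$ of $X\oplus_1 Y$, pick a finite-dimensional subspace $E_X\subseteq X$ containing the first coordinates of a spanning set of $F$, use octahedrality of $X$ to get $u\in S_X$ with $\|x+u\|\geq(1-\eps)(\|x\|+1)$ for all $x\in E_X$, and take $(u,0)\in S_{X\oplus_1 Y}$ as witness; then for $w=(x,y)\in F$ one has $\|w+(u,0)\|_1=\|x+u\|+\|y\|\geq(1-\eps)(\|x\|+1)+\|y\|\geq(1-\eps)(\|w\|_1+1)$. For (c): given a finite-dimensional $F\subseteq X\oplus_\infty Y$, take finite-dimensional $E_X\subseteq X$ and $E_Y\subseteq Y$ carrying the two coordinates of a spanning set of $F$, apply octahedrality of $X$ and of $Y$ to get $u\in S_X$ and $v\in S_Y$ with the respective inequalities, and take $(u,v)\in S_{X\oplus_\infty Y}$; then for $w=(x,y)\in F$, $\|w+(u,v)\|_\infty=\max\{\|x+u\|,\|y+v\|\}\geq(1-\eps)\max\{\|x\|+1,\|y\|+1\}=(1-\eps)(\|w\|_\infty+1)$.

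For (d): I would use the reformulation of octahedrality in Proposition~\ref{prop: reformulations of oct} (condition (iii)). Given $x_1,\dots,x_n\in S_X$ and $0<\eps<1$, apply octahedrality of $X\oplus_\infty Y$ to the points $(x_1,0),\dots,(x_n,0)\in S_{X\oplus_\infty Y}$ to obtain $(u,v)\in S_{X\oplus_\infty Y}$ with $\max\{\|x_i+u\|,\|v\|\}\geq 2-\eps$ for each $i$; since $\|v\|\leq 1<2-\eps$, this forces $\|x_i+u\|\geq 2-\eps$, hence $\|u\|\geq\|x_i+u\|-\|x_i\|\geq 1-\eps>0$, and $u/\|u\|\in S_X$ satisfies $\bigl\|x_i+u/\|u\|\bigr\|\geq\|x_i+u\|-(1-\|u\|)\geq 2-2\eps$ for all $i$; so $X$ is octahedral by Proposition~\ref{prop: reformulations of oct}.

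For (b), the point is that in an $\ell_p$-sum with $1<p<\infty$ a single unit vector cannot be simultaneously well aligned with a direction living in $X$ and with one living in $Y$. Fix $x_0\in S_X$ and $y_0\in S_Y$, put $E=\spann\{(x_0,0),(0,y_0)\}$, and suppose for contradiction that $X\oplus_p Y$ is octahedral. For small $\eps>0$, octahedrality applied to $E$ gives $(u,v)\in S_{X\oplus_p Y}$ with $\|(x_0,0)+(u,v)\|_p\geq 2(1-\eps)$ and $\|(0,y_0)+(u,v)\|_p\geq 2(1-\eps)$. Using $\|x_0+u\|\leq 1+\|u\|$ and $\|u\|^p+\|v\|^p=1$, the first inequality gives $g(\|u\|)\geq 2^p(1-\eps)^p$, where $g(a):=(1+a)^p-a^p+1$, and symmetrically $g(\|v\|)\geq 2^p(1-\eps)^p$. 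The function $g$ is continuous and strictly increasing on $[0,1]$ (because $g'(a)=p\bigl[(1+a)^{p-1}-a^{p-1}\bigr]>0$ for $p>1$), with $g(1)=2^p>g(2^{-1/p})$; so choosing $\eps$ small enough that $2^p(1-\eps)^p>g(2^{-1/p})$ forces $\|u\|>2^{-1/p}$ and $\|v\|>2^{-1/p}$, whence $\|u\|^p+\|v\|^p>1$ --- a contradiction. I expect (b) to be the main obstacle: once the auxiliary function $g$ and its monotonicity are identified, the rest is elementary, whereas the other three parts are essentially bookkeeping.
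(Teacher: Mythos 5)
Your proposal is correct in all four parts, and (a), (c), (d) coincide with the paper's argument up to cosmetic differences (the paper phrases them through the reformulation (iii) of Proposition~\ref{prop: reformulations of oct}, i.e.\ finitely many unit vectors and the estimate $\|x_i+y\|\geq 2-\eps$, while you work directly with the definition on a finite-dimensional subspace; these are equivalent). The only genuine divergence is in (b). Both proofs start from the same observation that a single witness $(u,v)\in S_{X\oplus_p Y}$ would have to satisfy $\|(x_0,0)+(u,v)\|_p\geq 2-\eps$ and $\|(0,y_0)+(u,v)\|_p\geq 2-\eps$ simultaneously, but they extract the contradiction differently: the paper adds the two $p$-th power estimates to get $\|x_0+u\|^p+\|y_0+v\|^p\geq 2(2-\eps)^p-1$ and plays this off against the triangle-inequality bound $\bigl(2^{1/p}+1\bigr)^p$, using the strict inequality $2^{p+1}-1>(2^{1/p}+1)^p$ (itself a case of strict Minkowski); you instead convert each estimate, via $\|x_0+u\|\leq 1+\|u\|$ and $\|u\|^p+\|v\|^p=1$, into a lower bound $g(\|u\|),g(\|v\|)\geq 2^p(1-\eps)^p$ for the strictly increasing function $g(a)=(1+a)^p-a^p+1$, which forces $\|u\|,\|v\|>2^{-1/p}$ and contradicts $\|u\|^p+\|v\|^p=1$. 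Your route avoids the auxiliary numerical inequality and is, if anything, slightly more self-contained; it also mirrors the monotonicity trick the paper itself uses for the function $f(t)=(1+t)^p-t^p$ in the proof of part (c) of Proposition~\ref{prop: p-sum LOH}. Both arguments are sound.
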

\begin{proof}
(a).
Assume that $X$ is octahedral. Let $(x_1,y_1),\dotsc,(x_n,y_n)\in S_{X\oplus_1 Y}$ and let $\eps>0$. By our assumption, there exists a $u\in S_X$ such that
\[
\|x_i+u\|\geq(1-\eps)\bigl(\|x_i\|+1\bigr)\quad\text{for all $i\in\{1,\dotsc,n\}$.}
\]
Hence, for all $i\in\{1,\dotsc,n\}$,
\[
\|(x_i,y_i)+(u,0)\|_1\geq(1-\eps)\bigl(\|x_i\|+1\bigr)+\|y_i\|\geq 2-2\eps.
\]

(b).
Let $x\in S_X$, $y\in S_Y$, and let $1<p<\infty$. We shall show that, for sufficiently small $\eps>0$, there is no $(u,v)\in S_{X\oplus_p Y}$ such that
\[
\bigl\|(x,0)+(u,v)\bigr\|_p\geq 2-\eps
\qquad\text{and}\qquad
\bigl\|(0,y)+(u,v)\bigr\|_p\geq 2-\eps.
\]
If such an element $(u,v)$ existed, then
\[
\bigl\|(x+u,y+v)\bigl\|_p^p=\|x+u\|^p+\|y+v\|^p\geq 2(2-\eps)^p-1.
\]
On the other hand,
\[\bigl\|(x+u,y+v)\bigl\|_p^p\le\bigl(\|(x,y)\|_p+\|(u,v)\|_p\bigr)^p=(2^{1/p}+1)^p.\]
For small $\eps$, we would have a contradiction  because
\[
2^{p+1}-1>(2^{1/p}+1)^p.
\]
The last inequality is easily obtained from the Minkowski's inequality by considering $(\sqrt[p]{2},0), (1,1)\in\mathbb R^2$.

(c).
Assume that $X$ and $Y$ are octahedral. Let $(x_1,y_1),\dotsc,(x_n,y_n)\in S_{X\oplus_\infty Y}$ and let $\eps>0$.
By our assumption, there are $u\in S_X$ and $v\in S_X$ such that
\[
\|x_i+u\|\geq (1-\eps)\bigl(\|x_i\|+1\bigr)\quad \text{for all $i\in\{1,\dotsc,n\}$,}
\]
and
\[
\|y_i+v\|\geq (1-\eps)\bigl(\|y_i\|+1\bigr)\quad \text{for all $i\in\{1,\dotsc,n\}$.}
\]
Consequently, for all $i\in\{1,\dotsc,n\}$,
\begin{align*}
\bigl\|(x_i,y_i)+(u,v)\bigr\|_\infty
&=\max\bigl\{\|x_i+u\|,\|y_i+v\|\bigr\}\\
&\geq (1-\eps)\bigl(\max\{\|x_i\|,\|y_i\|\}+1\bigl)\\
&=(1-\eps)\bigl(\|(x_i,y_i)\|_\infty+1\bigr).
\end{align*}

(d).
Assume that $X\oplus_\infty Y$ is octahedral.
%We will show that $X$ is octahedral.
Let $x_1,\dotsc,x_n\in S_X$ and let $0<\eps<1$. By our assumption, there exists a $(u,v)\in S_{X\oplus_\infty Y}$ such that
\[
\max\bigl\{\|x_i+u\|,\|v\|\bigr\}\geq 2-\eps\quad\text{for all $i\in\{1,\dotsc,n\}$.}
\]
Since $\|v\|\leq 1$, we have
\[
\|x_i+u\|\geq 2-\eps\quad\text{for all $i\in\{1,\dotsc,n\}$.}
\]
It follows that $\|u\|\geq 1-\eps$. Therefore, for all $i\in\{1,\dotsc,n\}$,
\[
\biggl\|x_i+\frac{u}{\|u\|}\biggr\|\geq \|x_i+u\|-\bigl(1-\|u\|\bigr)\geq 2-2\eps.
\]
\end{proof}

Proposition~\ref{prop: p-sum WOH} combined, respectively, with Theorems~\ref{thm: omnibus thm for the sd2P} and \ref{thm: omnibus thm for the weak* Sd2P} immediately gives the corresponding stability results for the strong diameter $2$ property and for the weak$^\ast$ strong diameter $2$ property.
These results for the strong diameter $2$ property are known, but our method provides an alternative approach.

\begin{corollary}
Let $X$ and $Y$ be Banach spaces.
\begin{itemize}
\item[{\rm(a)}]
If $X$ has the strong diameter $2$ property, then $X\oplus_\infty Y$ has the strong diameter $2$ property \emph{(see \cite[Proposition~4.6]{ALN})}.
\item[{\rm(b)}]

If $1<p<\infty$, then $X\oplus_p Y$ does not have the strong diameter $2$ property \emph{(cf. \cite[Theorem 3.2]{ABL}, \cite[Theorem~1]{HL})}.
%

%\marginpar{\tiny M\"{a}rt:  Igal juhul viited püstkirja. Kas viited väite alguses?\\Rainis: Tuleb m\~{o}elda 'see' ja 'cf' kasutusele.}
\item[{\rm(c)}]
If $X$ and $Y$ have the strong diameter $2$ property, then $X\oplus_1 Y$ has the strong diameter $2$ property \emph{(see \cite[Theorem~2.7~(iii)]{ALN}, see also \cite[Lemma~2.1]{GP} and \cite[Proposition~3.1]{ABL})}.
\item[{\rm(d)}]
If $X\oplus_1 Y$ has the strong diameter $2$ property, then $X$ has the strong diameter $2$ property \emph{(see \cite[Proposition~3.1]{ABL})}.
\end{itemize}
\end{corollary}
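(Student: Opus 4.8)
The plan is to obtain all four assertions by dualising Proposition~\ref{prop: p-sum OH}, using the characterisation of the strong diameter~$2$ property from Theorem~\ref{thm: omnibus thm for the sd2P}: a Banach space $Z$ has the strong diameter~$2$ property if and only if $Z^\ast$ is octahedral. The only additional ingredient is the standard isometric identification $(X\oplus_p Y)^\ast = X^\ast\oplus_q Y^\ast$, valid for all $1\le p\le\infty$, where $q$ is the conjugate exponent of $p$ (with $q=\infty$ when $p=1$ and $q=1$ when $p=\infty$). With these in hand, each part becomes a short chain of equivalences and implications.

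Concretely, for (a) I would argue: $X$ has the strong diameter~$2$ property $\Rightarrow$ $X^\ast$ is octahedral $\Rightarrow$ (Proposition~\ref{prop: p-sum OH}(a)) $X^\ast\oplus_1 Y^\ast$ is octahedral $\Rightarrow$ $(X\oplus_\infty Y)^\ast$ is octahedral $\Rightarrow$ $X\oplus_\infty Y$ has the strong diameter~$2$ property. For (b): if $1<p<\infty$ then the conjugate exponent $q$ also satisfies $1<q<\infty$, so by Proposition~\ref{prop: p-sum OH}(b) the space $(X\oplus_p Y)^\ast = X^\ast\oplus_q Y^\ast$ is not octahedral, and hence $X\oplus_p Y$ fails the strong diameter~$2$ property. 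For (c): $X$ and $Y$ have the strong diameter~$2$ property $\Rightarrow$ $X^\ast$ and $Y^\ast$ are octahedral $\Rightarrow$ (Proposition~\ref{prop: p-sum OH}(c)) $X^\ast\oplus_\infty Y^\ast = (X\oplus_1 Y)^\ast$ is octahedral $\Rightarrow$ $X\oplus_1 Y$ has the strong diameter~$2$ property. For (d): $X\oplus_1 Y$ has the strong diameter~$2$ property $\Rightarrow$ $(X\oplus_1 Y)^\ast = X^\ast\oplus_\infty Y^\ast$ is octahedral $\Rightarrow$ (Proposition~\ref{prop: p-sum OH}(d)) $X^\ast$ is octahedral $\Rightarrow$ $X$ has the strong diameter~$2$ property.

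I do not anticipate any real obstacle here: the corollary is a purely formal transcription of Proposition~\ref{prop: p-sum OH} through the duality of Theorem~\ref{thm: omnibus thm for the sd2P}. The only care required is bookkeeping — correctly matching conjugate exponents (in particular, recording that $\oplus_1$ dualises to $\oplus_\infty$ and vice versa in parts (a), (c), (d)) and invoking the isometric duality $(X\oplus_p Y)^\ast = X^\ast\oplus_q Y^\ast$ at the endpoints $p\in\{1,\infty\}$ as well as for $1<p<\infty$. Both are entirely routine, so the statement follows at once.
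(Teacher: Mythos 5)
Your proof is correct and is exactly the argument the paper intends: dualise Proposition~\ref{prop: p-sum OH} through Theorem~\ref{thm: omnibus thm for the sd2P} using $(X\oplus_p Y)^\ast = X^\ast\oplus_q Y^\ast$ (the paper's preceding sentence cites Proposition~\ref{prop: p-sum WOH}, but that is evidently a typo for Proposition~\ref{prop: p-sum OH}, which is what you correctly use). Nothing further is needed.
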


\begin{corollary}
Let $X$ and $Y$ be Banach spaces.
\begin{itemize}
\item[{\rm(a)}] If $X^\ast$ has the weak$^\ast$ strong diameter $2$ property, then $(X\oplus_1 Y)^\ast$ has the weak$^\ast$ strong diameter $2$ property.
\item[{\rm(b)}] If $1<p<\infty$, then $(X\oplus_p Y)^\ast$ does not have the weak$^\ast$ strong diameter $2$ property.
\item[{\rm(c)}] If $X^\ast$ and $Y^\ast$ have the weak$^\ast$ strong diameter $2$ property, then $(X\oplus_\infty Y)^\ast$ has the weak$^\ast$ strong diameter $2$ property.
\item[{\rm(d)}]
If $(X\oplus_\infty Y)^\ast$ has the weak$^\ast$ strong diameter $2$ property, then $X^\ast$ has the weak$^\ast$ strong diameter $2$ property.
\end{itemize}
\end{corollary}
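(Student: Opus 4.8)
The plan is to obtain all four items as a purely formal consequence of Theorem~\ref{thm: omnibus thm for the weak* Sd2P} together with the octahedrality statements of Proposition~\ref{prop: p-sum OH}. The point is that Theorem~\ref{thm: omnibus thm for the weak* Sd2P}, applied to an \emph{arbitrary} Banach space $Z$, says that $Z^\ast$ has the weak$^\ast$ strong diameter $2$ property (with respect to the canonical predual $Z$) if and only if $Z$ is octahedral. So for each item it suffices to apply this equivalence with $Z$ chosen to be the relevant $\ell_p$-sum and to feed in the octahedrality information supplied by Proposition~\ref{prop: p-sum OH}.

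In detail: for (a), ``$X^\ast$ has the weak$^\ast$ strong diameter $2$ property'' means ``$X$ is octahedral'' by Theorem~\ref{thm: omnibus thm for the weak* Sd2P}; by Proposition~\ref{prop: p-sum OH}(a) this forces $X\oplus_1 Y$ to be octahedral; and applying Theorem~\ref{thm: omnibus thm for the weak* Sd2P} once more, now to $Z=X\oplus_1 Y$, gives that $(X\oplus_1 Y)^\ast$ has the weak$^\ast$ strong diameter $2$ property. Item (c) is identical, using Proposition~\ref{prop: p-sum OH}(c) (octahedrality of $X$ and of $Y$ implies octahedrality of $X\oplus_\infty Y$). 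Item (d) runs in the reverse direction: by Theorem~\ref{thm: omnibus thm for the weak* Sd2P} the hypothesis says $X\oplus_\infty Y$ is octahedral, Proposition~\ref{prop: p-sum OH}(d) then gives that $X$ is octahedral, and a final appeal to Theorem~\ref{thm: omnibus thm for the weak* Sd2P} yields the weak$^\ast$ strong diameter $2$ property of $X^\ast$. For (b), Proposition~\ref{prop: p-sum OH}(b) tells us that $X\oplus_p Y$ is not octahedral when $1<p<\infty$, so Theorem~\ref{thm: omnibus thm for the weak* Sd2P} immediately denies the weak$^\ast$ strong diameter $2$ property to $(X\oplus_p Y)^\ast$.

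I do not expect a genuine obstacle here — the statement is essentially a dictionary translation of Proposition~\ref{prop: p-sum OH}. The only points worth a sentence of care are that the ``weak$^\ast$'' topology occurring in each assertion is indeed the one induced by the canonical predual of the $\ell_p$-sum, so that Theorem~\ref{thm: omnibus thm for the weak* Sd2P} applies verbatim, and that the paper's standing convention that all spaces are nontrivial is what makes Proposition~\ref{prop: p-sum OH}(b) applicable. Should one prefer to phrase the conclusions directly in terms of dual spaces, one may additionally invoke the standard identifications $(X\oplus_1 Y)^\ast = X^\ast\oplus_\infty Y^\ast$, $(X\oplus_\infty Y)^\ast = X^\ast\oplus_1 Y^\ast$, and $(X\oplus_p Y)^\ast = X^\ast\oplus_q Y^\ast$ with $1/p+1/q=1$, but these are not needed for the argument above.
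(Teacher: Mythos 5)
Your proposal is correct and is exactly the paper's intended argument: the corollary is obtained by translating each item of Proposition~\ref{prop: p-sum OH} through the equivalence of Theorem~\ref{thm: omnibus thm for the weak* Sd2P} (octahedrality of $Z$ $\Leftrightarrow$ weak$^\ast$ strong diameter $2$ property of $Z^\ast$), applied with $Z$ equal to the relevant $\ell_p$-sum. Your remarks about the weak$^\ast$ topology being the one induced by the canonical predual and about the nontriviality convention are sensible but add nothing beyond what the paper already takes for granted.
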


%
%\marginpar{\tiny *** M-ideaalide osa\\
%Rainis: Teoreemid v\~oiks s\~onastada lausetena. Lisada jutt, milliseid tulemusi uus tulemus \"uldistab}
%
We denote the \emph{annihilator} of a subspace $Y$ of a Banach space $X$ by
\[
Y^{\perp}=\{\xast\in \Xast \colon \xast(y)=0\quad \text{for all } y\in Y\}.
\]

A closed subspace $Y$ of a Banach space $X$ is called an \emph{$M$-ideal} in~$X$ (see, e.g. \cite{HWW}) if there exists a norm-$1$ projection $P$ on $\Xast$ with $\ker P=Y^{\perp}$ and
\[
\n{\xast}=\n{P\xast}+\n{\xast-P\xast}\qquad \text{for all $\xast\in \Xast$.}
\]
If, in addition, the range $\ran P$ of $P$ is \emph{$1$-norming,}
i.e.
\[
\|x\|=\sup\bigl\{|\xs(x)|\colon\,\xs\in\ran P,\,\|\xs\|\leq1\bigr\}\quad\text{for all $x\in X$,}
\]
then $Y$ is called a \emph{strict $M$-ideal.}

Relations between $M$-ideal structure and diameter $2$ properties were first considered in \cite{P} where it was proven that if a proper subspace $Y$ of a Banach space $X$ is a strict $M$-ideal in $X$, then both $Y$ and $X$ have the diameter $2$ property (see \cite[Theorem~2.4]{P}). In \cite[Theorem~4.10]{ALN}, it is shown that, under the same assumptions, one can conclude that both $Y$ and $X$ have even the strong diameter $2$ property. (An immediate corollary of this is that if a nonreflexive Banach space $X$ is an $M$-ideal in its bidual $\Xss$, then both $X$ and $X^{\ast\ast}$ have the strong diameter $2$ property.) In Theorem \ref{prop: strict M-ideal}, we shall present a simple proof of this result.

In \cite{HL}, it is shown that if an $M$-ideal $Y$ in $X$ has some diameter~$2$ property, then $X$ has the same diameter $2$ property
without the assumption that the range of the $M$-ideal projection is $1$-norming.
The duality between diameter $2$ properties and octahedrality implies a very quick proof of this result.

\begin{proposition}[see {\cite[Propositions~3, 4, and 5]{HL}}]\label{prop: stability thm for M-ideals}
Let $X$ be a Banach space and let $Y$ be an $M$-ideal in~$X$.
\begin{itemize}
\item[{\rm(a)}]
If $Y$ has the local diameter $2$ property, then also $X$ has the local diameter $2$ property.
\item[{\rm(b)}]
If $Y$ has the diameter $2$ property, then also $X$ has the diameter $2$ property.
\item[{\rm(c)}]
If $Y$ has the strong diameter $2$ property, then also $X$ has the strong diameter $2$ property.
\end{itemize}
\end{proposition}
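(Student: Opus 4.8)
The plan is to pass to the dual and combine the duality dictionary of Section~\ref{sec: 3. Criteria for weak* d2p-s} with the $\ell_1$-sum stability results of Section~\ref{sec: 4. Stability of octahedrality}. The structural input I would use is the classical $M$-ideal decomposition of the dual: if $Y$ is an $M$-ideal in $X$ and $P$ is the associated norm-$1$ projection on $X^\ast$ with $\ker P=Y^{\perp}$ and $\|x^\ast\|=\|Px^\ast\|+\|x^\ast-Px^\ast\|$ for all $x^\ast\in X^\ast$, then the restriction map $x^\ast\mapsto x^\ast|_Y$ is an isometric isomorphism of $\ran P$ onto $Y^\ast$; hence
\[
X^\ast=Y^\ast\oplus_1 Y^{\perp}
\]
isometrically (see \cite{HWW}). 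If $Y=X$, then $Y^{\perp}=\{0\}$ and all three assertions are trivial, so we may assume $Y$ is a proper $M$-ideal, in which case $Y^{\perp}\not=\{0\}$ and the right-hand side above is a genuine $\ell_1$-sum of nontrivial Banach spaces, as required by Propositions~\ref{prop: p-sum LOH}(a), \ref{prop: p-sum WOH}(a), and \ref{prop: p-sum OH}(a).

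Granting this identification, each part becomes a one-line argument. For (a): if $Y$ has the local diameter $2$ property, then $Y^\ast$ is locally octahedral by Theorem~\ref{thm: omnibus thm for the ld2P}; hence $Y^\ast\oplus_1 Y^{\perp}=X^\ast$ is locally octahedral by Proposition~\ref{prop: p-sum LOH}(a); hence $X$ has the local diameter $2$ property by Theorem~\ref{thm: omnibus thm for the ld2P} again. For (b), run the identical three-step chain with \emph{weakly octahedral} in place of \emph{locally octahedral}, using Theorem~\ref{thm: omnibus thm for the d2P} and Proposition~\ref{prop: p-sum WOH}(a). For (c), use \emph{octahedral}, Theorem~\ref{thm: omnibus thm for the sd2P}, and Proposition~\ref{prop: p-sum OH}(a).

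The only genuine content beyond invoking already-proved results is the isometric decomposition $X^\ast=Y^\ast\oplus_1 Y^{\perp}$, and I expect that — rather than anything in the diameter-$2$/octahedrality machinery — to be the single step needing care, since it is precisely where the $M$-ideal hypothesis enters and everything afterwards is a mechanical substitution into the duality dictionary. It is worth noting that the same argument produces, at no extra cost, the analogous stability statements for the three \emph{weak$^\ast$} diameter $2$ properties of dual spaces, via Theorems~\ref{thm: omnibus thm for the weak* ld2P}, \ref{thm: omnibus thm for the weak* d2P}, and \ref{thm: omnibus thm for the weak* Sd2P}.
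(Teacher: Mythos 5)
Your proposal is correct and follows essentially the same route as the paper: identify $X^\ast$ isometrically with $\ran P\oplus_1\ker P\cong Y^\ast\oplus_1 Y^\perp$ via the $M$-ideal projection, then chain Theorems~\ref{thm: omnibus thm for the ld2P}, \ref{thm: omnibus thm for the d2P}, \ref{thm: omnibus thm for the sd2P} with parts (a) of Propositions~\ref{prop: p-sum LOH}, \ref{prop: p-sum WOH}, \ref{prop: p-sum OH}. Your explicit handling of the degenerate case $Y^\perp=\{0\}$ is a small extra precaution the paper leaves implicit.
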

%
%\marginpar{\tiny M\"{a}rt: T\~{o}estus sai vist v\"{a}ga kohmakas?}

\begin{proof}
Since $Y$ is an $M$-ideal in $X$,  one has $\Xs=\ran P\oplus_1 \ker P$, where $P\colon\,\Xs\to\Xs$ is the $M$-ideal projection.
Since $\ran P$ is isometrically isomorphic to $\Ys$, the assertions (a), (b), and (c) follow, respectively,
from Theorem \ref{thm: omnibus thm for the ld2P} combined with Proposition \ref{prop: p-sum LOH}, (a),
from Theorem \ref{thm: omnibus thm for the d2P} combined with Proposition \ref{prop: p-sum WOH}, (a), and
from Theorem \ref{thm: omnibus thm for the sd2P} combined with Proposition \ref{prop: p-sum OH}, (a).
\begin{comment}
*********************************

If $Y$ has one of the diameter $2$ properties, then $\Ys$ is octahedral on the corresponding level. By Proposition \ref{prop: p-sum LOH}, \ref{prop: p-sum WOH} or \ref{prop: p-sum OH} $\Xs$ is octahedral on the corresponding level, and we are done.
\end{comment}
\end{proof}
\begin{comment}
\begin{proof}
Letting $P\colon\,\Xs\to\Xs$ be the $M$-ideal projection, throughout the proof, if convenient, we identify $\ran P$ and $\Ys$ ``in the usual way''.

Let $m,n\in\N$, and suppose that $\ran P$ has the following property:

\begin{itemize}
\item[($\bullet$)]
whenever $F$ is an $m$-dimensional subspace of $\ran P$, $\xss_1,\dotsc,\xss_n\in B_{\Xss}$, and $\eps>0$, there is a $\vs\in S_{\ran P}$ satisfying
\begin{equation*}
\|\ys+\vs\|\geq(1-\eps)\bigl(|\xss_j(\ys)|+\|\vs\|\bigr)\quad\text{for all $\ys\in F$ and all $j\in\{1,\dotsc,n\}$.}
\end{equation*}
\end{itemize}
Let $E$ be an $m$-dimensional subspace of $\Xs$, let $\xss_1,\dotsc,\xss_n\in B_{\Es}$, and let $\eps>0$.
It suffices to find a $\us\in S_{\Xs}$ satisfying
\begin{equation*}
\|\xs+\us\|\geq(1-\eps)\bigl(|\xss_j(\xs)|+\|\us\|\bigr)\quad\text{for all $\xs\in\Es$ and all $j\in\{1,\dotsc,n\}$.}
\end{equation*}
Letting $\vs\in S_{\Ys}$ be as in ($\bullet$) with $F:=\bigl\{P\xs\colon\,\xs\in E\bigr\}$,
one has, for all $\xs\in\Xs$ and all $j\in\{1,\dots,n\}$,
\begin{align*}
\|\xs+\vs\|
&=\|P\xs+\vs\|+\|(I-P)\xs\|\\
&\geq(1-\eps)\bigl(|\xss_j(P\xs)|+\|\vs\|\bigr)+\|(I-P)\xs\|\\
%&\geq(1-\eps)\bigl(|\xss_j(P\xs)|+\|(I-P)\xs\|+\|\vs\|\bigr)\\
&\geq(1-\eps)\bigl(|\xss_j(P\xs)|+\bigl|\xss_j\bigl((I-P)\xs\bigr)\bigr|+\|\vs\|\bigr)\\
&\geq(1-\eps)\bigl(|\xss_j(\xs)|+\|\vs\|\bigr).
\end{align*}
%
\end{proof}
\end{comment}

\begin{proposition}[cf. {\cite[Theorem~4.10]{ALN}}]\label{prop: strict M-ideal}
Let $X$ be a Banach space and let a proper subspace $Y$ be a strict $M$-ideal in $X$.
Then both $Y$ and $X$ have the strong diameter $2$ property.
\end{proposition}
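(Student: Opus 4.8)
The plan is to reduce the whole statement to octahedrality of a single space. Write $P$ for the $M$-ideal projection on $\Xs$; then $\Xs=\ran P\oplus_1 Y^\perp$, and $\ran P$ is isometrically isomorphic to $\Ys$ (standard $M$-ideal facts, already used in the proof of Proposition~\ref{prop: stability thm for M-ideals}). By Theorem~\ref{thm: omnibus thm for the sd2P}, $Y$ has the strong diameter $2$ property if and only if $\Ys$ --- equivalently, $\ran P$ --- is octahedral. Moreover, once we know $\ran P$ is octahedral, Proposition~\ref{prop: p-sum OH},~(a), applied to $\ran P\oplus_1 Y^\perp$ (here $Y^\perp\ne\{0\}$ because $Y$ is proper) shows that $\Xs$ is octahedral, whence $X$ too has the strong diameter $2$ property by Theorem~\ref{thm: omnibus thm for the sd2P}. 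So it suffices to prove that $\ran P$, with the norm inherited from $\Xs$, is octahedral.

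To do this I would use reformulation~(iii) of Proposition~\ref{prop: reformulations of oct}: given $\us_1,\dots,\us_n\in S_{\ran P}$ and $\eps>0$, I must find $v\in S_{\ran P}$ with $\|\us_i+v\|\geq 2-\eps$ for all $i$. Since $Y$ is proper, fix $\ws\in S_{Y^\perp}$. As $\Xs=\ran P\oplus_1 Y^\perp$ is an $\ell_1$-sum and $\us_i\in\ran P$, $\ws\in Y^\perp$, we get $\|\us_i+\ws\|=\|\us_i\|+\|\ws\|=2$ for every $i$. The point is now to replace $\ws$ by an element of $\ran P$ without losing much, and this is where strictness enters: since $\ran P$ is $1$-norming for $X$, a routine Hahn--Banach separation argument (carried out in $\Xs$ with the weak$^\ast$ topology, whose dual is $X$) shows that $B_{\ran P}$ is weak$^\ast$ dense in $B_{\Xs}$; hence there is a net $(v_\alpha)$ in $B_{\ran P}$ converging to $\ws$ in the weak$^\ast$ topology of $\Xs$.

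It remains to pass to the limit. Adding the fixed element $\us_i$ is weak$^\ast$ continuous, so $\us_i+v_\alpha\to\us_i+\ws$ weak$^\ast$, and since the norm of $\Xs$ is weak$^\ast$ lower semicontinuous, $\liminf_\alpha\|\us_i+v_\alpha\|\geq\|\us_i+\ws\|=2$ for each of the finitely many $i$; similarly $\liminf_\alpha\|v_\alpha\|\geq\|\ws\|=1$, which together with $\|v_\alpha\|\leq 1$ forces $\|v_\alpha\|\to 1$. Pick an $\alpha$ with $\|v_\alpha\|>1-\eps/3$ and $\|\us_i+v_\alpha\|>2-\eps/3$ for all $i$, and put $v:=v_\alpha/\|v_\alpha\|\in S_{\ran P}$. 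Then $\|v_\alpha-v\|=1-\|v_\alpha\|<\eps/3$, so $\|\us_i+v\|\geq\|\us_i+v_\alpha\|-\|v_\alpha-v\|>2-\eps$; since $v$ and all $\us_i+v_\alpha$ genuinely lie in $\ran P$, this verifies octahedrality of $\ran P$. The only mildly delicate ingredients are the equivalence between ``$1$-norming'' and weak$^\ast$ density of $B_{\ran P}$ in $B_{\Xs}$, and the weak$^\ast$ lower semicontinuity of the norm; everything else is bookkeeping. Note that both hypotheses are used in an essential way: properness of $Y$ produces $\ws\in S_{Y^\perp}$, and strictness allows $\ws$ to be approximated weak$^\ast$ from inside $B_{\ran P}$.
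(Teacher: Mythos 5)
Your proof is correct, and it takes a genuinely different route from the paper's. Both arguments share the same first reduction: since $\Xs=\ran P\oplus_1 Y^\perp$ with $\ran P$ isometric to $\Ys$ and $Y^\perp\neq\{0\}$, once $\Ys$ is known to be octahedral, Proposition~\ref{prop: p-sum OH},~(a), and Theorem~\ref{thm: omnibus thm for the sd2P} give the strong diameter $2$ property of $X$ (the paper packages this step as Proposition~\ref{prop: stability thm for M-ideals},~(c)). The difference lies in how octahedrality of $\Ys$ is obtained. The paper works downstairs in $X$ and $Y$: it picks $x\in S_X$ with $d(x,Y)>1-\eps$, invokes \cite[Proposition~2.3]{DW} to produce $z\in B_Y$ with $\|\pm y_j+x-z\|<1+\eps$ and $|\ys_j(x-z)|<\eps$, uses $1$-norming to find $\ys\in S_{\Ys}$ nearly attaining $\|x-z\|$, and gets the lower bound on $\|\ys_j+\ys\|$ by evaluating at $y_j+x-z$. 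You instead work entirely upstairs in $\Xs$: properness supplies $\ws\in S_{Y^\perp}$ with $\|\us_i+\ws\|=2$ exactly (from the $\ell_1$-decomposition), and strictness is converted into weak$^\ast$ density of $B_{\ran P}$ in $B_{\Xs}$, so that $\ws$ can be approximated from inside $B_{\ran P}$ while weak$^\ast$ lower semicontinuity of the dual norm preserves the estimates; normalizing costs only $1-\|v_\alpha\|$. All the steps you flag as delicate are indeed standard and correct, and the finitely many ``eventually'' conditions on the net can be met simultaneously, so the argument closes. Your version avoids the external lemma from \cite{DW} altogether and isolates cleanly where properness and strictness each enter; the paper's version is more quantitative and stays in the predual, in the spirit of its other arguments. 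Both are complete proofs.
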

\begin{proof}
Letting $P\colon\,\Xs\to\Xs$ be the $M$-ideal projection, throughout the proof, for convenience, we identify $\ran P$ and $\Ys$ ``in the usual way''.

By Proposition \ref{prop: stability thm for M-ideals}, it suffices to show that $Y$ has the strong dia\-meter $2$ property.
To this end, letting $\ys_1,\dotsc,\ys_n\in S_{\Ys}$ ($n\in\N$) and $\eps>0$ be arbitrary,
by Theorem~\ref{thm: omnibus thm for the sd2P} and Proposition~\ref{prop: reformulations of oct}, it suffices to find a $\ys\in S_{\Ys}$ such that
\[
(1+\eps)\|\ys_j+\ys\|\geq2-7\eps
\quad\text{for all $j\in\{1,\dotsc,n\}$.}
\]
Choose an $x\in S_X$ so that $d(x,Y)>1-\eps$, and $y_1,\dotsc,y_n\in S_{Y}$ so that $\ys_j(y_j)>1-\eps$.
By \cite[Proposition~2.3]{DW}, there is a $z\in B_{Y}$ such that
\[
|\ys_j(x-z)|<\eps
\quad\text{and}\quad
\|\pm y_j+x-z \|<1+\eps
\quad\text{for all $j\in\{1,\dotsc,n\}$.}
\]
Let $\ys\in S_{\Ys}$ be such that
\[
\ys(x-z)>\|x-z\|-\eps\geq d(x,Y)-\eps>1-2\eps.
\]
Whenever $j\in\{1,\dotsc,n\}$, one has $\ys(y_j)>-3\eps$ because
\begin{align*}
1+\eps
&>\ys(-y_j+x-z)>-\ys(y_j)+1-2\eps,
\end{align*}
thus
\begin{align*}
(1+\eps)\|\ys_j+\ys\|
&\geq(\ys_j+\ys)(y_j+x-z)\\
&=\ys_j(y_j)+\ys_j(x-z)+\ys(y_j)+\ys(x-z)\\
&>1-\eps-\eps-3\eps+1-2\eps\\
&=2-7\eps.
\end{align*}
\end{proof}
\bibliographystyle{amsplain}
\footnotesize
\bibliography{Bibliography}
\end{document}